\documentclass[a4paper,11pt]{amsart}
\usepackage[matrix,arrow,tips,curve]{xy}
\usepackage[english]{babel}
\usepackage{amsmath}
\usepackage{amssymb}
\usepackage{tikz}
\usepackage{enumerate}
\usepackage{graphicx}
\usepackage{amsthm}
\usepackage[colorlinks=true,linkcolor=blue,citecolor=green]{hyperref}
\usetikzlibrary{trees}
\usetikzlibrary{arrows}
\usepackage[pangram]{blindtext}
\newtheorem{claim}{Claim}[section]
\newtheorem{thm}{Theorem}[section]

\newtheorem{Conjecture}[thm]{Conjecture}
\newtheorem*{thm*}{Theorem}
\newtheorem{remark}{Remark}[section]

\newtheorem*{acknowledgment}{Acknowledgment}

\newtheorem{theorem}[thm]{Theorem} 
\newtheorem{lema}[thm]{Lemma}
\newtheorem{question}[thm]{Question}
\newtheorem{cor}[thm]{Corollary}
\newtheorem{corollary}[thm]{Corollary}
\newtheorem{prop}[thm]{Proposition}
\newtheorem{defi}{Definition}[section]

\newtheorem{assumption}{Assumption}[section]

\definecolor{wwwwww}{rgb}{0.4,0.4,0.4}

\DeclareMathOperator{\Ric}{Ric}

\DeclareMathOperator{\dv}{div}

\hypersetup{pdfpagemode=UseNone}
\hypersetup{pdfstartview=FitH}
		
\numberwithin{equation}{section}

\title{Divergence Identity for the scalar curvature and Rigidity of Codazzi Tensors}

\author[Xu CHeng]{Xu Cheng}
\address{Instituto de Matem\'atica e Estat\'istica, Universidade Federal Fluminense, S\~ao Domingos,
Niter\'oi, RJ 24210-201, Brazil}
\email{xucheng@id.uff.br}
\author[Andrés Lipa]{Andrés Lipa}
\address{Instituto de Matem\'atica e Estat\'istica, Universidade Federal Fluminense, S\~ao Domingos,
Niter\'oi, RJ 24210-201, Brazil}
\email{linkin2388@gmail.com}
\author[Detang Zhou]{Detang Zhou}
\address{ Instituto de Matem\'atica e Estat\'istica, Universidade Federal Fluminense, S\~ao Domingos,
Niter\'oi, RJ 24210-201, Brazil}
\email{zhoud@id.uff.br}
\date{}
\thanks{X. Cheng was partially supported by CNPq/Brazil [Grant:  314504/2023-0]. }
\thanks{D. Zhou was partially supported by CNPq/Brazil [Grant:  308067/2023-1] and FAPERJ/ Brazil [Grant: E-26/200.386/2023].}
\begin{document}
\begin{abstract}
We introduce a local vector field on an $n$-dimensional Riemannian manifold, defined as the sum of the covariant derivatives of a local orthonormal frame,  and derive an explicit identity for its divergence, decomposed into a scalar curvature term and an auxiliary term involving connection coefficients. This result is applied to rigidity problems for Codazzi symmetric tensors. In particular, we give a new proof of a Tang-Yan theorem, which states that on a  closed $n$-dimensional manifold with nonnegative scalar curvature, a smooth Codazzi symmetric tensor whose trace invariants up to order $n-1$ are constant must have constant eigenvalues. We also obtain further rigidity results under assumptions on elementary symmetric functions of the eigenvalues, with applications to the isoparametric rigidity of closed hypersurfaces in the unit sphere.

\end{abstract}
\maketitle
\section{Introduction}\label{intro}

Inspired by classical moving-frame calculations, we consider a vector field on an $n$-dimensional Riemannian manifold $M^n$, obtained by summing the covariant derivatives 
\(\nabla_{e_i}e_i\) along a locally defined orthonormal frame $\{e_1,\dots,e_n\}$. Equivalently, this vector field is metrically dual to the trace of the Levi-Civita connection 1-form with respect to the chosen orthonormal frame.   
We derive an explicit expression for its divergence and relate it to the scalar curvature of $M^n$. More precisely, we prove the following result.

\begin{theorem}\label{thm:divX-intro2}
 Let  $\{e_1,\dots,e_n\}$ be  an  orthonormal frame on an open subset  $D$ of an $n$-dimensional Riemannian manifold $M^n$.
Let $X$ be the vector field on $D$ defined by 
\begin{equation}
X:=\sum_{i=1}^n\nabla_{e_i}e_{i}.
\end{equation}
Then the divergence of $X$ satisfies
\begin{equation}\label{eq:divX}
\dv (X) =\frac12 S_M - \Psi ,
\end{equation}
where $S_M$ denotes the scalar curvature of $M^n$, and 
\begin{equation}
\Psi = \sum_{\substack{i<j \\ k\neq i,j}}^n\big(\Gamma_{ii}^k\Gamma_{jj}^k - \Gamma_{ij}^k\Gamma_{ji}^k \big),  \ \Gamma_{ij}^k=\langle \nabla_{e_i}e_{j}, e_k\rangle.
\end{equation}
\end{theorem}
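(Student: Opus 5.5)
We compute everything in the frame, using the connection coefficients $\Gamma_{ij}^k=\langle\nabla_{e_i}e_j,e_k\rangle$. Differentiating $\langle e_j,e_k\rangle=\delta_{jk}$ gives the antisymmetry $\Gamma_{ij}^k=-\Gamma_{ik}^j$, and we will use this throughout. First we expand $X=\sum_{i,k}\Gamma_{ii}^k e_k$. Then
\[
\dv X=\sum_k\langle\nabla_{e_k}X,e_k\rangle=\sum_{i,k}e_k(\Gamma_{ii}^k)+\sum_{i,k,l}\Gamma_{ii}^l\Gamma_{kl}^k .
\]
Second, we write the scalar curvature in the frame as $S_M=\sum_{i\neq j}\langle R(e_i,e_j)e_j,e_i\rangle$, with $R(e_i,e_j)=\nabla_{e_i}\nabla_{e_j}-\nabla_{e_j}\nabla_{e_i}-\nabla_{[e_i,e_j]}$. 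For the sectional curvature $K_{ij}=\langle R(e_i,e_j)e_j,e_i\rangle$, direct expansion gives
\[
K_{ij}=e_i(\Gamma_{jj}^i)-e_j(\Gamma_{ij}^i)+\sum_l\big(\Gamma_{jj}^l\Gamma_{il}^i-\Gamma_{ij}^l\Gamma_{jl}^i\big)-\sum_l(\Gamma_{ij}^l-\Gamma_{ji}^l)\Gamma_{lj}^i .
\]
Here we use $[e_i,e_j]=\sum_l(\Gamma_{ij}^l-\Gamma_{ji}^l)e_l$ and $\langle\nabla_{e_i}\nabla_{e_j}e_j,e_i\rangle=e_i(\Gamma_{jj}^i)+\sum_l\Gamma_{jj}^l\Gamma_{il}^i$.

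Next we match derivative terms. By antisymmetry $\Gamma_{ij}^i=-\Gamma_{ii}^j$, so the derivative part of $K_{ij}$ is $e_i(\Gamma_{jj}^i)+e_j(\Gamma_{ii}^j)$. This is symmetric in $i,j$. Summing over $i\neq j$ gives $2\sum_{i\neq k}e_k(\Gamma_{ii}^k)$, which equals $2\sum_{i,k}e_k(\Gamma_{ii}^k)$ since $\Gamma_{kk}^k=0$. Hence $\tfrac12 S_M$ contains exactly the derivative terms of $\dv X$. It then remains to show the purely algebraic identity
\[
\sum_{i,k,l}\Gamma_{ii}^l\Gamma_{kl}^k-\tfrac12\sum_{i\neq j}\Big[\sum_l\big(\Gamma_{jj}^l\Gamma_{il}^i-\Gamma_{ij}^l\Gamma_{jl}^i\big)-\sum_l(\Gamma_{ij}^l-\Gamma_{ji}^l)\Gamma_{lj}^i\Big]=-\Psi .
\]

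To prove it, we rewrite every quadratic term with antisymmetry in the form $\Gamma_{aa}^c\Gamma_{bb}^c$ or $\Gamma_{ab}^c\Gamma_{ba}^c$. For example, $\Gamma_{kl}^k=-\Gamma_{kk}^l$, so the first sum is $-\sum_l\big(\sum_i\Gamma_{ii}^l\big)^2$. Similarly $\Gamma_{il}^i=-\Gamma_{ii}^l$, $\Gamma_{jl}^i=-\Gamma_{ji}^l$ and $\Gamma_{lj}^i=-\Gamma_{li}^j$. We then split the index sums according to whether $l$ coincides with $i$ or $j$ or is distinct from both. The terms with $l\in\{i,j\}$ should cancel against the diagonal parts $\sum_i(\Gamma_{ii}^l)^2$ of the square. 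In the remaining terms with three distinct indices, the cross term $\sum_l(\Gamma_{ij}^l-\Gamma_{ji}^l)\Gamma_{lj}^i$ must be symmetrized over $i\leftrightarrow j$. Using cyclic relabelling of the three distinct indices, the contributions of type $\Gamma_{ij}^l\Gamma_{lj}^i$ should either cancel or reduce to $\Gamma_{ij}^l\Gamma_{ji}^l$. After that, the survivors should be exactly $-\sum_{i<j,\,k\neq i,j}(\Gamma_{ii}^k\Gamma_{jj}^k-\Gamma_{ij}^k\Gamma_{ji}^k)$.

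The main obstacle is this final bookkeeping of the cubic index sums, in particular verifying that the cyclic terms $\Gamma_{ij}^l\Gamma_{jl}^i$ combine correctly; sign errors are easy to make there. As a safeguard, I would first check the identity in dimension $2$, where $\Psi=0$ and the claim reduces to the classical $\dv X=K$. I would then check it in a diagonal frame in dimension $3$.
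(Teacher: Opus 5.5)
Your route is essentially the paper's. The paper starts from $\dv X$ and converts $e_j\langle\nabla_{e_i}e_i,e_j\rangle+e_i\langle\nabla_{e_j}e_j,e_i\rangle$ into $\langle R(e_i,e_j)e_j,e_i\rangle$ plus quadratic terms, which is your expansion of $K_{ij}$ read backwards. Everything you actually wrote is correct: the formula for $\dv X$, the frame formula for $K_{ij}$, the matching of derivative terms, and the algebraic identity you reduce to.

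The gap is that the proposal stops exactly where the nontrivial part of the proof lies. The final identity is only asserted with ``should''. Your hedge that the terms $\Gamma_{ij}^l\Gamma_{lj}^i$ ``either cancel or reduce'' leaves open precisely the point that decides the answer. If the three-distinct-index part of the cross term cancelled, you would obtain $-\sum_{i<j,\,k\neq i,j}(\Gamma_{ii}^k\Gamma_{jj}^k+\Gamma_{ij}^k\Gamma_{ji}^k)$, i.e.\ the wrong sign in $\Psi$. Your proposed safeguards cannot settle this: in dimension $2$ there are no triples of distinct indices, and a single three-dimensional example is a sanity check, not a proof.

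The missing step is the paper's Claim 2.1, and it is short. For pairwise distinct $i,j,l$, antisymmetry gives $\Gamma_{ij}^l\Gamma_{lj}^i=\Gamma_{il}^j\Gamma_{li}^j$ and $-\Gamma_{ji}^l\Gamma_{lj}^i=\Gamma_{jl}^i\Gamma_{lj}^i$. Summing over ordered triples of distinct indices and relabelling, each sum equals $T:=\sum\Gamma_{ab}^c\Gamma_{ba}^c$. So this part of the cross term is $2T$: the terms reduce, they do not cancel.

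The part of the cross term with $l\in\{i,j\}$ is $(\Gamma_{ii}^j)^2+(\Gamma_{jj}^i)^2$ for each ordered pair. The $l\in\{i,j\}$ parts of the other two sums vanish because $\Gamma_{ab}^b=0$. With $U:=\sum\Gamma_{ii}^l\Gamma_{jj}^l$ over ordered distinct triples, your left-hand side is
\[
-\sum_{i\neq l}(\Gamma_{ii}^l)^2-U+\tfrac12U-\tfrac12T+\sum_{i\neq j}(\Gamma_{ii}^j)^2+T=-\tfrac12(U-T)=-\Psi,
\]
using that $U$ and $T$ are symmetric under $i\leftrightarrow j$. With this inserted your argument is complete and coincides with the paper's proof.
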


The identity \eqref{eq:divX} is quite general, expressing the divergence of the vector field $X$ as the sum of a scalar curvature term and an auxiliary term involving connection coefficients. It provides a flexible analytic tool: the scalar curvature appears with a fixed coefficient, while $\Psi$ can be estimated or controlled under various geometric conditions. In this paper, we show that such control is precisely what is needed to streamline the proofs of several rigidity results for Codazzi symmetric tensors and to extend these results. At the same time, Theorem \ref{thm:divX-intro2} is of independent interest.

We begin by recalling a rigidity theorem in dimension three due to  Almeida and Brito. In \cite{Almeida-Brito90}, they  proved the following result.
\begin{theorem}[Almeida--Brito \cite{Almeida-Brito90}]\label{A-B90}
Let $M$ be a closed three-dimensional Riemannian manifold. Suppose that 
$\mathfrak{a}$ is a smooth symmetric tensor field of type $(0,2)$ on $M$ and $A$ is its associated
$(1,1)$-tensor field. Assume  that:
\begin{enumerate}
    \item $S_{M} \geq 0$;
    \item $\mathfrak{a}$ is Codazzian;
    \item both $\operatorname{tr}(A)$ and $\operatorname{tr}(A^{2})$ are constant.
\end{enumerate}
Then $\operatorname{tr}(A^{3})$ is  constant, and thus the eigenvalues of $A$
are constant.

\end{theorem}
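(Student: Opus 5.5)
The plan is to apply Theorem~\ref{thm:divX-intro2} to a frame of eigenvectors of $A$. Let $\lambda_1\le\lambda_2\le\lambda_3$ be the eigenvalues. They are continuous functions. Since $\operatorname{tr}A$ and $\operatorname{tr}A^2$ are constant, the case in which all three eigenvalues coincide somewhere forces $A=cI$ everywhere by continuity, and the conclusion is trivial. Otherwise, consider the open set $M_A$ of points where the number of distinct eigenvalues is locally constant; it is dense. On each component of $M_A$ the eigenvalues are smooth and we can choose a local orthonormal eigenframe $\{e_i\}$ with $Ae_i=\lambda_ie_i$.

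On such a set the Codazzi equation $(\nabla_{e_k}A)e_j=(\nabla_{e_j}A)e_k$, written in the frame, gives the standard relations
\[
e_j(\lambda_i)=(\lambda_j-\lambda_i)\Gamma_{ii}^j \quad (i\ne j),
\qquad
(\lambda_i-\lambda_k)\Gamma_{ji}^k=(\lambda_j-\lambda_k)\Gamma_{ij}^k \quad (i,j,k \text{ distinct}).
\]
Differentiating $\sum\lambda_i=c_1$ and $\sum\lambda_i^2=c_2$ along $e_k$ gives two linear relations on the three quantities $e_k(\lambda_i)$.

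First consider the case of three distinct eigenvalues. The two relations, combined with $e_k(\lambda_k)$ expressed via the others, determine the $e_k(\lambda_i)$. Since $\operatorname{tr}A^3$ is then a function of a single parameter, the aim is to show every $e_k\lambda_i=0$, equivalently $\Gamma_{ii}^j=0$ for $i\ne j$. In that case $X$ has components only involving $\Gamma_{ii}^k$, so $X=0$ and $\dv X=0$. Hence we cannot expect $X$ to vanish a priori. Instead, I would compute $\Psi$. The terms $\Gamma_{ij}^k\Gamma_{ji}^k$ with $i,j,k$ distinct can be rewritten via the Codazzi relation as $\frac{\lambda_i-\lambda_k}{\lambda_j-\lambda_k}(\Gamma_{ji}^k)^2$. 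Summing over the three distinct pairs gives a combination of the form
\[
\frac{(\lambda_1-\lambda_3)}{(\lambda_2-\lambda_3)}a^2+\cdots,
\]
where $a=\Gamma_{21}^3$ up to antisymmetry. I expect the cyclic sum of these ratios to be nonpositive, or to combine so that $\sum\Gamma_{ij}^k\Gamma_{ji}^k\le0$, since one ratio is negative and compensates. The terms $\Gamma_{ii}^k\Gamma_{jj}^k$ can be expressed via $e_k(\lambda_i)/(\lambda_k-\lambda_i)$. Using the two constraint equations, these products should have a definite sign as well, so that $\Psi\le0$, with equality only when all $e_k\lambda_i=0$.

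Next I would globalize. The frame is only local, but $X$ is determined up to sign choices of $e_i$, which do not affect $\nabla_{e_i}e_i$. So $X$ is globally defined on the open set $U$ where the eigenvalues are distinct. The identity then gives $\dv X=\tfrac12 S_M-\Psi\ge0$ on $U$. Integrating requires control of $X$ near $\partial U$, where two eigenvalues collide. Since $\Gamma_{ii}^j=e_j\lambda_i/(\lambda_j-\lambda_i)$ may blow up there, I would instead integrate over $\{|\lambda_i-\lambda_j|\ge\epsilon\}$. Constancy of the traces forces the collision set to be a level set of $\lambda_1$, say, and I would show the boundary flux tends to zero or has a good sign. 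Alternatively, I would multiply $X$ by a suitable power of the discriminant to kill the singularity. Once $\int\dv X=0$ is obtained, we get $\Psi\equiv0$ and $S_M\equiv0$, hence all $e_k\lambda_i=0$. Then $\operatorname{tr}A^3$ is locally constant on a dense set, and therefore constant.

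The main obstacle I anticipate is twofold: establishing the sign $\Psi\le0$ from the Codazzi and trace constraints, which is an algebraic inequality in three variables, and handling the boundary behaviour near eigenvalue collisions in the integration. For the latter, I would first try to show that on the closure of the collision set the gradients of the eigenvalues vanish, so that $X$ stays bounded.
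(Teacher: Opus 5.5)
Your skeleton (eigenframe, the identity $\dv X=\tfrac12 S_M-\Psi$, $\Psi\le 0$, integrate) is the paper's own: Theorem~\ref{teo:M^n1-1} is stated for $n\ge 3$, and the proof of Theorem~\ref{teo:M^n} goes through for $n=3$. But the two steps you leave as expectations are where the proof lives, and one of your expectations points the wrong way.

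The mixed terms enter $\Psi$ with a minus sign, so $\sum\Gamma_{ij}^k\Gamma_{ji}^k\le 0$ would push $\Psi$ up, not down. What is true, and needed, is exact cancellation. By the full symmetry of $a_{ijk}$, all three products are multiples of the single number $a_{123}^2$, and the sum equals $a_{123}^2\sum_k 1/P'(\lambda_k)=0$ (Lemma~\ref{lem:a_is_Codazzian}).

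For the other terms, constancy of $\sigma_1,\sigma_2$ gives $d\lambda_i=d\sigma_3/P'(\lambda_i)$. Hence $\Gamma_{ii}^k=(\sigma_3)_k/\big((\lambda_i-\lambda_k)P'(\lambda_i)\big)$, and for $n=3$ one gets $\Psi=-|\nabla\sigma_3|^2/\prod_{i<j}(\lambda_i-\lambda_j)^2\le 0$.

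Your Codazzi relation also has the sign reversed: it should read $e_j(\lambda_i)=(\lambda_i-\lambda_j)\Gamma_{ii}^j$. This does not affect $\Psi$, but it reverses $X$ relative to $\nabla\sigma_3$. With your sign, the divergent part of the boundary flux would have the unfavorable sign.

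The more serious gap is the boundary step, which is the heart of the argument and which you only name. Your fallback of showing $X$ stays bounded would not work. Near $\{\lambda_1=\lambda_2\}$, $|X|$ is of order $|\nabla\sigma_3|/(\lambda_1-\lambda_2)^2$. Since $\{\lambda_1=\lambda_2\}$ is an extremal level of $\sigma_3$, $\nabla\sigma_3$ does vanish there, but in general only like $|\lambda_1-\lambda_2|$. So $X$ can blow up like $1/|\lambda_1-\lambda_2|$, and $d\lambda_i=d\sigma_3/P'(\lambda_i)$ is merely bounded, not zero.

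What is actually needed is the following.

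(a) An exact flux formula, $\langle X,\nabla\sigma_3\rangle=\sum_k\frac{P''(\lambda_k)}{2P'(\lambda_k)^2}(\sigma_3)_k^2$, which comes from the partial-fraction identity of Proposition~\ref{lem:partial-fraction-identity}.

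(b) Write $P=Q+t$ with $t=-\sigma_3$. A collision can only be the double root at the local maximum of $Q$ (where $P''<0$), occurring when $t=\min_M t$, or the double root at the local minimum (where $P''>0$), occurring when $t=\max_M t$. Cut along regular level sets $t=a_k\searrow\min t$ and $t=b_k\nearrow\max t$; your sets $\{|\lambda_i-\lambda_j|\ge\epsilon\}$ are essentially of this form. Then the divergent coefficients have the favorable sign relative to the outward normal, so $\langle X,\nu\rangle\le C|\nabla\sigma_3|$ on $\partial D_k$.

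(c) The Almeida--Brito estimate $\int_{\{u=a_k\}}|\nabla u|\le\int_{\{u\le a_k\}}|\Delta u|\to 0$ as $a_k\to\min u$ (Lemma~\ref{lema:boundary}).

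Only with (a)--(c) do you get $\limsup_k\int_{D_k}\dv X\le 0$. That gives $\Psi\equiv0$, i.e.\ $\nabla\sigma_3\equiv 0$, on $t^{-1}((\min t,\max t))$, which is impossible if $\sigma_3$ is nonconstant.
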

As a consequence, it was shown in \cite{Almeida-Brito90} that if $M$ is a closed three-dimensional hypersurface in a space form with constant mean curvature and constant scalar curvature $S_M\ge 0$, then $M$ is isoparametric. Later, Chang \cite{Chang93a} and Cheng and Wan \cite{Cheng-Wan} independently strengthened this corollary by proving that the condition $S_M \ge 0$ follows automatically from the assumptions of constant mean curvature and constant scalar curvature.

Subsequently, several works addressed the extension of Theorem~\ref{A-B90} to higher dimensions; see, for instance, 
 Lusala, Scherfner and de Sousa \cite{Lusala-Scherfner-Sousa05}, Scherfner, Vrancken and Weiss \cite{SVW}, Scherfner, Weiss, and Yau \cite{SWY}, Deng,  Gu and  Wei \cite{DGW}, Tang, Wei and Yan \cite{Tang-Wei-Yan20},  and more recently Tang and Yan \cite{Tang-Yan23}.  In \cite{Tang-Yan23}, the following theorem was proved.

\begin{theorem}[Tang--Yan \cite{Tang-Yan23}]\label{T-Y}
Let $M^n$ $(n>3)$ be a closed $n$-dimensional Riemannian manifold.  
Suppose that $\mathfrak{a}$ is a smooth symmetric tensor field of type $(0,2)$ on $M$ and $A$ is its associated
$(1,1)$-tensor field. Assume  that:
\begin{enumerate}
    \item $S_{M} \geq 0$;
    \item $\mathfrak{a}$ is Codazzian;
    \item $\operatorname{tr}(A^k)$ is constant for $k=1,\dots,n-1$.
\end{enumerate}
Then  $\operatorname{tr}(A^{n})$ is constant and thus all eigenvalues of $A$ are constant.  
Moreover, if $A$ has $n$ distinct eigenvalues somewhere on $M$, then $S_M\equiv0$.
\end{theorem}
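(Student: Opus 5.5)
The plan is to argue by contradiction using Theorem \ref{thm:divX-intro2}, applied with an orthonormal frame of eigenvectors of $A$.

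\textbf{Step 1: set-up on the region of distinct eigenvalues.} Since $\operatorname{tr}(A^k)$ is constant for $k\le n-1$, Newton's identities make $\sigma_1,\dots,\sigma_{n-1}$ of the eigenvalues constant. Only $\sigma_n=\det A$ may vary. Let $U$ be the open set where $A$ has $n$ distinct eigenvalues $\lambda_1<\dots<\lambda_n$. On $U$ the $\lambda_i$ are smooth, and we choose local smooth unit eigenvector fields $e_i$.

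\textbf{Step 2: consequences of the Codazzi condition.} Write $h_{ijk}$ for the components of $\nabla\mathfrak a$; they are totally symmetric. From
\[
h_{ijk}=\delta_{ij}e_k(\lambda_i)+(\lambda_j-\lambda_i)\Gamma_{ki}^j,
\]
we get, for $i\ne j$:
\begin{itemize}
\item $\Gamma_{ki}^j=h_{ijk}/(\lambda_j-\lambda_i)$;
\item $\Gamma_{ii}^k=h_{iik}/(\lambda_k-\lambda_i)$ for $k\ne i$, together with $h_{iik}=e_k(\lambda_i)$.
\end{itemize}
Next differentiate $\sum_i\lambda_i^m=\mathrm{const}$ for $m=1,\dots,n-1$. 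This gives $\sum_i\lambda_i^{m-1}e_k(\lambda_i)=0$ for $m=1,\dots,n-1$. So for each $k$, the vector $(e_k(\lambda_i))_i$ is orthogonal to the Vandermonde rows of degree $0,\dots,n-2$. Hence
\[
e_k(\lambda_i)=c_k\prod_{j\neq i}(\lambda_i-\lambda_j)^{-1}
\]
for some scalar $c_k$, since the Vandermonde kernel is spanned by that vector.

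\textbf{Step 3: sign of $\Psi$.} This is the main computation. Substitute these expressions into
\[
\Psi=\sum_{i<j,\,k\ne i,j}\big(\Gamma_{ii}^k\Gamma_{jj}^k-\Gamma_{ij}^k\Gamma_{ji}^k\big).
\]
The term $\Gamma_{ij}^k\Gamma_{ji}^k$ equals $h_{ijk}^2/((\lambda_k-\lambda_j)(\lambda_k-\lambda_i))$. The term $\Gamma_{ii}^k\Gamma_{jj}^k$ equals $e_k(\lambda_i)e_k(\lambda_j)/((\lambda_k-\lambda_i)(\lambda_k-\lambda_j))$.

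I aim to show $\Psi\le 0$ on $U$. For the first-type terms, the partial-fraction sum $\sum_{i\ne k}\prod_{j\ne i}(\lambda_i-\lambda_j)^{-1}(\lambda_k-\lambda_i)^{-1}$ should collapse by the Lagrange interpolation identity $\sum_i\lambda_i^p/\prod_{j\ne i}(\lambda_i-\lambda_j)=0$ for $p\le n-2$, producing a definite quantity $-c_k^2(\cdots)^2$. For the distinct-index terms $h_{ijk}$ with $i,j,k$ all different, I would group each triple. For a triple, the sum over cyclic choices of the omitted index is
\[
h_{ijk}^2\Big(\tfrac1{(\lambda_k-\lambda_i)(\lambda_k-\lambda_j)}+\tfrac{1}{(\lambda_i-\lambda_j)(\lambda_i-\lambda_k)}+\tfrac{1}{(\lambda_j-\lambda_i)(\lambda_j-\lambda_k)}\Big)=0.
\]
That cancellation is the key algebraic identity.

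This bookkeeping is where I expect the obstacle. One must match each ordered pair $(i,j)$ with $k$ and get signs right, so that what remains is $\Psi=-\sum_k(\text{square})\le0$. Ideally the remainder vanishes exactly when $c_k=0$, i.e.\ when $e_k(\lambda_i)=0$ for all $i$ and $k$.

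\textbf{Step 4: integration.} Then $\dv X=\tfrac12S_M-\Psi\ge0$ on $U$. Since $X$ is only locally defined, first check that $X$ is independent of sign changes $e_i\mapsto -e_i$. This holds because $\nabla_{e_i}e_i$ is invariant under such changes, so $X$ is global on $U$. The boundary of $U$, where eigenvalues collide, must be handled with a cutoff. Alternatively, a cleaner route is to use the discriminant of $A$ together with the ODE structure: the constancy of $\sigma_1,\dots,\sigma_{n-1}$ means each $\lambda_i$ is a root of $p(t)=q(t)+(-1)^n\sigma_n$ with $q$ fixed. I would integrate $\dv X$ against a cutoff vanishing to high order on $\partial U$, and bound the $|X|$ contributions using $|X|\lesssim|\nabla\sigma_n|/\mathrm{disc}$.

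This yields $\int_U(\tfrac12S_M-\Psi)=0$, so $\Psi\equiv0$ and $S_M\equiv0$ on $U$. Then $c_k=0$, so $\nabla\sigma_n=0$ on $U$. Hence $\sigma_n$ is locally constant on $U$, and by continuity, together with the fact that repeated roots of $p$ occur only at finitely many values of $\sigma_n$, it is constant on $M$. Therefore $\operatorname{tr}(A^n)$ is constant and all eigenvalues are constant.

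In the case where $A$ has $n$ distinct eigenvalues somewhere, this now holds everywhere, $U=M$, and $S_M\equiv0$ follows from $\int_M\dv X=0$. If $U$ is empty, distinct eigenvalues never occur. In that case I would proceed by induction on the number of distinct eigenvalues, restricting to the open set where the multiplicity pattern is locally maximal and repeating the argument with the reduced Vandermonde system.
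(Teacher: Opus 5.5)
Your overall strategy matches the paper's: eigenframe, $\operatorname{div}X=\tfrac12S_M-\Psi$, $\Psi\le0$, then integrate. Step 1, the Vandermonde description $e_k(\lambda_i)=c_k/P'(\lambda_i)$, and the triple cancellation of the $\Gamma_{ij}^k\Gamma_{ji}^k$ terms are all correct. However, both places where the real work lies are missing.

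In Step 3, after that cancellation one has $\Psi=\tfrac12\sum_k c_k^2L(k)$, where $L(k)=\big(\sum_{i\ne k}d_{ik}\big)^2-\sum_{i\ne k}d_{ik}^2$ and $d_{ik}=1/\big(P'(\lambda_i)(\lambda_k-\lambda_i)\big)$. The Lagrange identity does not make $\sum_{i\ne k}d_{ik}$ collapse. By the partial-fraction expansion of $1/P$ it equals $-P''(\lambda_k)/\big(2P'(\lambda_k)^2\big)$, which is generally nonzero; for $\lambda=(0,1,2,3)$ and $k=4$ it is $-11/36$. So $L(k)<0$, i.e.\ (square of the sum) $<$ (sum of the squares), is a genuine inequality, not bookkeeping that yields $-c_k^2(\cdots)^2$. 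It is the Tang--Wei--Yan lemma. The paper proves it by passing to $b_i=1/(\lambda_i-\lambda_k)$, using Vieta to get $\sum_i c_{ik}=\Lambda\sum_i b_i$, and applying a concavity estimate for $\ln\big[(1+\sum x_i)\prod(1-x_i)\big]$ to obtain $(\sum_i c_{ik})^2<c_{i_0k}^2$. Without this, or a substitute, you do not have $\Psi\le0$.

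Step 4 also fails as proposed, because absolute bounds on $X$ are not enough. Let $t=(-1)^n\sigma_n$ with image $[a,b]$. At a fold $P'(\lambda_i)^2\sim t-a$, so $|X|$ is of order $|\nabla t|/(t-a)$, and the flux through $\{t=s\}$ is only bounded by $(s-a)^{-1}\int_{\{t=s\}}|\nabla t|$. The Almeida--Brito lemma gives $\int_{\{t=s\}}|\nabla t|\to0$, but with no rate. A priori it could behave like $(s-a)^{1/2}$, for instance if $\min t$ were attained along a hypersurface with quadratic vanishing. A cutoff $\psi(t)$, however flat, only averages this same quantity.

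The missing idea is a one-sided sign. On $\{t=a_k\}$ the outward flux equals $\frac{1}{2|\nabla t|}\sum_i\frac{P''(\lambda_i)}{P'(\lambda_i)^2}(\sigma_n)_i^2$. As $t\searrow a$, roots can only merge at local maxima of $Q=P-t$, where $P''<0$. So these ratios are bounded above: they tend to $-\infty$, which is the harmless direction. Symmetrically, near $b$ merging happens at local minima and the ratios are bounded below. Hence the flux is at most $C\int_{\partial D_k}|\nabla\sigma_n|$, which tends to $0$ on $D_k=\{t\in(a_k,b_k)\}$ with $a_k,b_k$ regular values. Together with $\operatorname{div}X\ge0$, this forces $\Psi\equiv0$.

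This sign argument depends on the sign of $X$. With the paper's convention $\Gamma_{ij}^k=\langle\nabla_{e_i}e_j,e_k\rangle$ one has $h_{ijk}=\delta_{ij}e_k(\lambda_i)+(\lambda_i-\lambda_j)\Gamma_{ki}^j$, so your $\Gamma_{ii}^k$ has the wrong sign. That is harmless for $\Psi$, but not for the flux.

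Finally, the case $U=\emptyset$ needs no induction. On the dense set where the $\lambda_i$ are smooth, $P'(\lambda_i)\,d\lambda_i+(-1)^n d\sigma_n=0$ forces $d\sigma_n=0$ wherever an eigenvalue repeats, so $\sigma_n$ is constant.
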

An application of Theorem \ref{T-Y} to hypersurfaces in the unit sphere yields the following rigidity result.

\begin{corollary}[Tang--Yan \cite{Tang-Yan23}]
Let $M^n$ $(n>3)$ be a closed hypersurface in the unit sphere $S^{n+1}$. 
If the following conditions are satisfied:
\begin{enumerate}
    \item $S_{M} \geq 0$;
    \item $\sum_{i=1}^n \lambda_i^k \ \text{is constant for each } k=1,\dots,n-1$, where $\ \lambda_1,\dots,\lambda_n$ are the principal curvatures of $M^n$, 
\end{enumerate}
then $M^n$ is isoparametric. Moreover, if $M^n$ has $n$ distinct principal curvatures at some point, then $S_M\equiv0$.
\end{corollary}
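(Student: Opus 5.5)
The statement to prove is the Tang--Yan corollary. The plan is to apply Theorem~\ref{T-Y} to the second fundamental form of $M^n\subset S^{n+1}$. Let $\mathfrak{a}=h$ be the second fundamental form with respect to a unit normal field and $A$ the shape operator. Since $M^n$ is closed, orientability is the only subtlety. If $M^n$ is not orientable, I would pass to the orientable double cover $\tilde M\to M$, which is again closed with the pulled-back metric and immersion. Isoparametricity (constancy of the principal curvatures) descends, because the principal curvatures are only defined up to a global sign and the hypotheses are invariant under the deck transformation. In fact every closed hypersurface of $S^{n+1}$ is orientable, since $S^{n+1}$ is simply connected, so this step is only a remark.

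The first step is to check the Codazzi condition. Since $S^{n+1}$ has constant sectional curvature $1$, the ambient curvature term $\langle \bar R(X,Y)Z,\nu\rangle$ vanishes for $X,Y,Z$ tangent. The Codazzi equation then gives $(\nabla_X h)(Y,Z)=(\nabla_Y h)(X,Z)$, so $h$ is a smooth Codazzian symmetric $(0,2)$-tensor. The second step is to note that the eigenvalues of $A$ are the principal curvatures $\lambda_1,\dots,\lambda_n$. Hence $\operatorname{tr}(A^k)=\sum_i\lambda_i^k$, and hypothesis (2) is exactly condition (3) of Theorem~\ref{T-Y}, while hypothesis (1) is condition (1).

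Theorem~\ref{T-Y} then shows that $\operatorname{tr}(A^n)$ is constant. The power sums $p_1,\dots,p_n$ are then all constant. By Newton's identities the elementary symmetric functions $\sigma_1,\dots,\sigma_n$ of the $\lambda_i$ are constant, so the characteristic polynomial of $A$ is the same at every point. Its roots, the principal curvatures counted with multiplicity, are therefore constant, and $M^n$ is isoparametric by definition (a hypersurface of a sphere with constant principal curvatures). The final clause is immediate. If $M^n$ has $n$ distinct principal curvatures at some point, then $A$ has $n$ distinct eigenvalues there, and the last assertion of Theorem~\ref{T-Y} gives $S_M\equiv 0$.

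No step is a real obstacle, since all the analytic content sits inside Theorem~\ref{T-Y}. The only point needing a line of justification is the vanishing of the normal component of the ambient curvature in the Codazzi equation for space forms. The passage from constant power sums to constant eigenvalues is the standard Newton-identity argument.
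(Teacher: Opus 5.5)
Your proposal is correct and follows the paper's route: the second fundamental form of a hypersurface in $S^{n+1}$ is Codazzian, $\operatorname{tr}(A^k)=\sum_i\lambda_i^k$, and Theorem~\ref{T-Y} gives both conclusions at once, just as the paper does for Corollary~\ref{cor:hypersphere}. Your Newton-identity step is redundant, since Theorem~\ref{T-Y} already asserts that the eigenvalues are constant.

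One small correction: your aside that every closed hypersurface of $S^{n+1}$ is orientable holds for embedded hypersurfaces but fails for immersed ones. Boy's surface in $S^3$ is an example, and its products with circles give examples in higher dimensions. So it is the double-cover step you already included that actually handles the non-orientable case.
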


The  above  results are related to the unsolved Chern conjecture: 
\begin{Conjecture}[Chern]
     Let $M^n$  be a closed, minimally immersed hypersurface of the unit sphere $\mathbb{S}^{n+1}$ with constant scalar curvature. Then $M^n$ is isoparametric.
\end{Conjecture}

For various results related to this conjecture and further background on isoparametric hypersurfaces in spheres, see, for example, \cite{Tang-Yan23}.

In this paper, we give alternative proofs of Theorem \ref{T-Y} and of a special case of Theorem~\ref{T-Y} that was previously obtained by Tang--Wei--Yan \cite{Tang-Wei-Yan20} (see Theorem \ref{T-W-Y}). Our proof has several new features compared with those in \cite{Tang-Yan23} and \cite{Tang-Wei-Yan20}: First, we introduce the vector field $X$ and derive the identity~\eqref{eq:divX} through a detailed computation involving connection coefficients. Then the use of \eqref{eq:divX} allows us to avoid the use of an $(n-1)$-form, which was employed previously in \cite{Almeida-Brito90} for $n=3$ and later in \cite{Tang-Wei-Yan20} and \cite{Tang-Yan23} for 
$n>3$, and lets the proof proceed in a clearer, step-by-step manner.
Second, we make effective use of the derivatives--up to second order--of the characteristic polynomial $P(x)$ of the eigenvalue functions  of the $(1,1)$-tensor $A$. As a result, the introduction of Proposition~\ref{lem:partial-fraction-identity} and the use of Lemma~\ref{lem-Lr-h} simplify the argument relative to that in \cite{Tang-Yan23}.

In this paper, we also study a different setting in which the elementary  symmetric functions $\sigma_k(A)$ of the eigenvalues  of the $(1,1)$-tensor field $A$ are constant for all $k\neq n-1$. 
More precisely, we prove the following results.

\begin{theorem}\label{teo:M^n-(n-1)-intro2}
Let $M^n$ $(n> 3)$ be a closed $n$-dimensional Riemannian manifold. Suppose that $\mathfrak{a}$ is a smooth Codazzi symmetric $(0,2)$-tensor field  on $M$,  and let $A$ be its associated
$(1,1)$-tensor field.  
Assume that:
\begin{enumerate}[(i)]
    \item $S_M \ge 0$;
    \item $\sigma_k(A)$ is constant for $k=1,\dots,n-2$,  $\sigma_n(A)$ vanishes, and $\sigma_{n-1}$ is nonzero everywhere;
    \item all eigenvalues of $A$ are either nonnegative everywhere or nonpositive everywhere.
\end{enumerate}
Then $\sigma_{n-1}(A)$ is constant, and hence all eigenvalues of $A$ are constant.  Moreover, if the eigenvalues of $A$ are distinct somewhere on $M$, then $S_M\equiv0$.
\end{theorem}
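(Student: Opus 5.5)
The plan is to argue by contradiction, using the divergence identity of Theorem~\ref{thm:divX-intro2} on the open set where $\sigma_{n-1}(A)$ is not locally constant.

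\textbf{Algebraic reduction.} Since $\sigma_n(A)=0$ and $\sigma_{n-1}(A)\neq0$ everywhere, exactly one eigenvalue vanishes, and it is simple. Call it $\lambda_n\equiv 0$. The others $\lambda_1,\dots,\lambda_{n-1}$ are nonzero, and by (iii) we may assume they are all positive. Write
$$P(x)=\prod_i(x-\lambda_i)=x\,Q(x),$$
whose coefficients $\sigma_1,\dots,\sigma_{n-2}$ are constant, and set $f=\sigma_{n-1}(A)$.

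Suppose $f$ is not constant. Let $U$ be the open set where $df\neq0$ and the eigenvalues are distinct. I expect a preliminary lemma showing that on $\{df\neq0\}$ the eigenvalues are generically distinct; otherwise Codazzi together with the constancy of the other $\sigma_k$ forces $df=0$. On $U$ take a smooth orthonormal eigenframe. Differentiating $P(\lambda_i)=0$, and using that only the coefficient of $x$ varies, gives
$$e_k(\lambda_i)=(-1)^n\,\frac{\lambda_i\,e_k(f)}{P'(\lambda_i)} .$$
The Codazzi equations give, for $i\ne j$,
$$\Gamma_{ii}^j=\frac{e_j(\lambda_i)}{\lambda_i-\lambda_j},\qquad \Gamma_{ij}^k(\lambda_j-\lambda_k)=\Gamma_{ji}^k(\lambda_i-\lambda_k)\quad(i,j,k \text{ distinct}).$$

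\textbf{Computing $X$ and $\Psi$.} First,
$$X=\sum_j\Big(\sum_{i\ne j}\Gamma_{ii}^j\Big)e_j .$$
For each $j$, the inner sum is a residue sum for the rational function $x/\big(P(x)(x-\lambda_j)\big)$, which decays like $x^{-n}$. So the partial-fraction identity (Proposition~\ref{lem:partial-fraction-identity}) applies. It converts the sum into the residue at the double pole $\lambda_j$, which involves $P''(\lambda_j)/P'(\lambda_j)^2$. I expect the coefficient of $e_j(f)e_j$ in $X$ to be a rational expression $\phi(\lambda_j)$ in $\lambda_j$ and $\sigma_1,\dots,\sigma_{n-1}$. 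This is the step where Lemma~\ref{lem-Lr-h} should let $X$ be rewritten as a globally meaningful field, e.g.\ a combination of $\nabla f$ and $A^m\nabla f$, or the gradient of a smooth function of $f$. Then $X$ is smooth and bounded on all of $M$, not just on $U$.

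For $\Psi$, split into two kinds of terms. The terms $\Gamma_{ii}^k\Gamma_{jj}^k$ equal
$$\frac{\lambda_i\lambda_j\,e_k(f)^2}{P'(\lambda_i)P'(\lambda_j)(\lambda_i-\lambda_k)(\lambda_j-\lambda_k)} .$$
Summed over $i<j$ with $k$ fixed, this can again be evaluated by partial fractions as a residue computation. The terms $-\Gamma_{ij}^k\Gamma_{ji}^k$ with $i,j,k$ distinct are handled by the Codazzi relation together with the antisymmetry $\Gamma_{ij}^k=-\Gamma_{ik}^j$. The standard trick is to group each triple $\{i,j,k\}$ so that the combination becomes a sum of squares times factors of the form $\frac{\lambda_i-\lambda_k}{\lambda_j-\lambda_k}$. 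The target is the inequality $\Psi\le 0$ on $U$. Here the sign hypothesis (iii), i.e.\ $\lambda_i\lambda_j\ge 0$, and $\lambda_n=0$ (which kills all terms with $i$ or $j$ equal to $n$ in the first group) are used.

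\textbf{Conclusion and main obstacle.} Granting the previous two steps, $\dv X=\tfrac12S_M-\Psi\ge0$ holds on $U$, and by continuity wherever $X$ is globally defined. Integrating over closed $M$ gives $S_M\equiv0$ and $\Psi\equiv0$. From $\Psi\equiv0$ one extracts $\lambda_i\lambda_j\,e_k(f)^2\equiv0$ for suitable indices, which contradicts $df\neq0$ on $U$. Hence $f=\sigma_{n-1}(A)$ is constant, all $\sigma_k$ are constant, and the eigenvalues are constant. For the final claim, once the eigenvalues are constant and distinct somewhere, hence everywhere, one has a global eigenframe. Then $e_k(\lambda_i)=0$ gives $\Gamma_{ii}^j=0$ and $X=0$, and the same grouping shows $\Psi\le0$. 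Therefore $0=\int\dv X\ge\tfrac12\int S_M$, forcing $S_M\equiv0$.

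The main obstacle I anticipate is the global issue of the second step: showing that $X$, a priori defined only where the eigenframe is smooth and seemingly singular as eigenvalues collide, extends to a bounded smooth (or at least integrable-divergence) field on $M$. The partial-fraction rewriting is my first attempt. If it fails, I would fall back on a cutoff argument near the coincidence set and control the boundary terms using $|X|\lesssim|\nabla f|\cdot|\phi(\lambda)|$.
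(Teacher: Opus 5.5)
The local computations in your proposal are correct and coincide with the paper's: the formula for $e_k(\lambda_i)$, $\Gamma_{ii}^k=e_k(\lambda_i)/(\lambda_i-\lambda_k)$, and the residue evaluation of $\sum_{i\ne j}\Gamma_{ii}^j$ (Lemma~\ref{lem-x}). The gap is the globalization step, and your own residue computation shows why it fails. With $f=\sigma_{n-1}$, it gives, where the eigenvalues are distinct,
\[
X=\sum_{j<n}\phi(\lambda_j)\,e_j(f)\,e_j+f^{-1}e_n(f)\,e_n,\qquad \phi(x)=(-1)^n\frac{Q''(x)}{2\,Q'(x)^2}.
\]
Only the constant term $t=(-1)^{n-1}f$ of $Q$ varies, so $\phi$ is a \emph{fixed} rational function with double poles at the critical points $\xi_i$ of $Q$. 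Eigenvalues can collide only at such a $\xi_i$, and near a collision $\phi(\lambda_i)\sim|t-t_0|^{-1}$. Nothing forces $e_i(f)$ to vanish at that rate, and the eigenframe is discontinuous across the collision set. So $X$ has no bounded, let alone smooth, extension to $M$, and you cannot integrate $\dv X$ over $M$.

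Your fallback $|X|\lesssim|\nabla f|\,|\phi(\lambda)|$ fails for the same reason: $|\phi(\lambda)|$ is unbounded, while Lemma~\ref{lema:boundary} only gives $\int_{\partial D_k}|\nabla f|\to0$, with no rate. Three ingredients are missing:
\begin{enumerate}
\item[(a)] A root analysis of $Q$ as a one-parameter family in $t$. This shows the eigenvalues are distinct on all of $D_0=\{t\in(a,b)\}$, where $[a,b]$ is the range of $t$. They can collide only on the extremal level sets: at critical points with $Q''(\xi_i)<0$ when $t=a$, and with $Q''(\xi_i)>0$ when $t=b$.
\item[(b)] Cutting along level sets of $f$ itself, $D_k=\{t\in(a_k,b_k)\}$ with regular values $a_k\searrow a$, $b_k\nearrow b$. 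Then the flux is exactly $\pm|\nabla f|^{-1}\sum_k u_k\,e_k(f)^2$.
\item[(c)] The \emph{sign} of the blow-up: $Q''(\lambda_i)/Q'(\lambda_i)^2\to-\infty$ as $t\searrow a$ and $\to+\infty$ as $t\nearrow b$. This is exactly what makes the outward flux at most $C|\nabla f|$ on both boundary pieces.
\end{enumerate}
This yields only $\limsup_k\int_{D_k}\dv X\le0$. That suffices, together with $\dv X\ge0$ on $D_0$ and the monotonicity of $D_k$ (Theorem~\ref{teo:M^n2}).

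The second gap is that $\Psi\le0$ is not established, and the mechanism you suggest is not the one that works. Your triple grouping of the $\Gamma_{ij}^k\Gamma_{ji}^k$ terms works, but it yields exactly $0$ (Lemma~\ref{lem:a_is_Codazzian}). So everything rests on
\[
\sum_{i<j,\;k\neq i,j}\Gamma_{ii}^k\Gamma_{jj}^k=\tfrac12\sum_kG(k)\,e_k(f)^2,\qquad G(k)=\Bigl(\sum_i b_{ik}\Bigr)^2-\sum_i b_{ik}^2 .
\]
The individual terms have no sign, and the factor $\lambda_i\lambda_j$ plays no role, because $\lambda_i/P'(\lambda_i)=1/Q'(\lambda_i)$. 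Hence for $k\ne n$ the coefficients are those of the $n-1$ distinct numbers $\lambda_1,\dots,\lambda_{n-1}$, with $b_{nk}=0$. Then $G(k)<0$ is the nontrivial Tang--Wei--Yan inequality (Lemma~\ref{lem-TWY}, which needs $n-1\ge3$). A residue computation evaluates $\sum_i b_{ik}$ but says nothing about $\sum_i b_{ik}^2$.

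Hypothesis (iii) is used only for $k=n$. With $d_i=1/P'(\lambda_i)$ and $\sum_i d_i=0$, one has $G(n)=d_n^2-\sum_{i<n}d_i^2$. Moreover $|P'(\lambda_{i_0})|<|P'(0)|$ for the nonzero eigenvalue $\lambda_{i_0}$ nearest $0$, precisely because all eigenvalues share a sign. Without these inputs you have neither $\dv X\ge0$ nor the step $\Psi\equiv0\Rightarrow\nabla f\equiv0$.
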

More generally, we obtain the following result.
\begin{thm} 
\label{teo:M^n-(n-1)-m}
Let $M^n$ $(n> 3)$ be a closed $n$-dimensional Riemannian manifold. Suppose that $\mathfrak{a}$ is a smooth Codazzi symmetric $(0,2)$-tensor field  on $M$,  and let $A$ be its associated
$(1,1)$-tensor field.  
Assume that:
\begin{enumerate}[(i)]
    \item $S_M \ge 0$;
    \item $\sigma_k(A)$ is constant for $k=1,\dots,n-2$;
    \item the maximal eigenvalue of $A$   or the minimal eigenvalue of $A$ is simple and constant.
    
\end{enumerate}
Then all eigenvalues of $A$ are constant.  Moreover, if the eigenvalues of $A$ are distinct somewhere, then $S_M\equiv0$.
\end{thm}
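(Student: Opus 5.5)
The strategy is to reduce Theorem~\ref{teo:M^n-(n-1)-m} to the same mechanism as the proof of Theorem~\ref{T-Y}: build a special orthonormal frame on the open dense set where the eigenvalue multiplicities are locally constant, apply the divergence identity \eqref{eq:divX} with $X=\sum_i\nabla_{e_i}e_i$, and show that $\Psi\le 0$ with equality only when the eigenvalues are constant.

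\emph{Step 1: reduce to Theorem~\ref{T-Y}.} Suppose, say, the maximal eigenvalue $\lambda_n$ is simple and equal to a constant $c$. Since $\sigma_1,\dots,\sigma_{n-2}$ are constant, Newton's identities make $\operatorname{tr}(A^k)$ constant for $k\le n-2$. Consider the remaining eigenvalues $\lambda_1,\dots,\lambda_{n-1}$; their elementary symmetric functions $\tilde\sigma_k$ satisfy
\[
\sigma_k=\tilde\sigma_k+c\,\tilde\sigma_{k-1}.
\]
By induction, $\tilde\sigma_1,\dots,\tilde\sigma_{n-2}$ are constant. Hence the characteristic polynomial of $A$ is
\[
(x-c)\,\tilde P(x),
\]
where $\tilde P$ has constant coefficients except for the last one, $\tilde\sigma_{n-1}$. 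The single unknown function is therefore $f=\tilde\sigma_{n-1}=\prod_{i<n}\lambda_i$, and $\sigma_{n-1}$, $\sigma_n$ and $\operatorname{tr}(A^{n-1})$ are all affine in $f$. The claim becomes: $f$ is constant. We are in the same situation as Theorem~\ref{T-Y}, with one free function, except that the free function is now $\sigma_{n-1}$ rather than $\sigma_n$.

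\emph{Step 2: Codazzi computations.} On the open set $U$ where $f$ is nonconstant and the eigenvalues are distinct, choose a smooth eigenframe $e_1,\dots,e_n$. The Codazzi equations give
\[
\Gamma_{ij}^k(\lambda_j-\lambda_k)=\Gamma_{ik}^j(\lambda_k-\lambda_j)\ \ (i,j,k \text{ distinct}),\qquad
e_i(\lambda_j)=(\lambda_i-\lambda_j)\Gamma_{jj}^i\ \ (i\ne j).
\]
Since $\tilde P(\lambda_j)=0$ and only the last coefficient varies, differentiating gives
\[
e_i(\lambda_j)=\frac{e_i(f)}{\tilde P'(\lambda_j)}\quad (j<n),\qquad e_i(\lambda_n)=0.
\]
The latter yields $\Gamma_{nn}^i=0$ for all $i$, so $e_n$ is geodesic. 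The former expresses $\Gamma_{jj}^i$ through $e_i(f)$ and the partial fractions of $\tilde P$. Substituting these into $\Psi$, I would use the analogue of Proposition~\ref{lem:partial-fraction-identity} for $\tilde P$ (degree $n-1$) and the identity
\[
\sum_j \frac{\lambda_j^m}{\tilde P'(\lambda_j)}=0 \quad\text{for } m<n-2,
\]
together with Lemma~\ref{lem-Lr-h}. The goal is to show that the diagonal part $\sum\Gamma_{ii}^k\Gamma_{jj}^k$ of $\Psi$ cancels, leaving only terms of the form $-\sum\Gamma_{ij}^k\Gamma_{ji}^k$. By Codazzi these are $-(\Gamma_{ij}^k)^2$ times positive ratios, because the products $(\lambda_j-\lambda_k)(\lambda_i-\lambda_k)$ recombine into squares, as in Tang--Yan. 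This gives $\Psi\le0$ on $U$.

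\emph{Step 3: integrate and conclude.} Integrating \eqref{eq:divX} requires a globally meaningful quantity. Following the proof of Theorem~\ref{T-Y}, I would replace $X$ by a combination
\[
\sum_i h_i(\lambda)\,\nabla_{e_i}e_i
\]
that is independent of the sign and ordering choices of the frame, multiplied by a cutoff built from the discriminant of $\tilde P$ (or by $|\nabla f|^2$). This lets us integrate over $M$ and handle the boundary of $U$. Stokes' theorem, $S_M\ge0$ and $\Psi\le0$ then force
\[
S_M\equiv0,\qquad \Psi\equiv0 \ \text{ on } U.
\]
Equality in the sign argument forces $e_i(f)=0$, so $f$ is locally constant, contradicting the choice of $U$. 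Where eigenvalues coincide, a repeated root of $\tilde P$ makes $f$ locally determined by the constant coefficients, so $f$ takes finitely many values there. By continuity $f$ is constant, hence all eigenvalues are constant. The final claim, $S_M\equiv0$ when the eigenvalues are distinct somewhere, follows from the equality case of the integrated identity on the open set of distinct eigenvalues.

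\emph{Main obstacle.} The hardest step is Step 2: verifying that, with the free coefficient in position $n-1$ and the extra geodesic direction $e_n$, the cross terms in $\Psi$ involving the index $n$ still have the right sign. Here $\lambda_n=c$ is the maximum (or minimum), so factors $\lambda_n-\lambda_j$ have a fixed sign. My first attempt would be to split
\[
\Psi=\Psi_{<n}+\Psi_n,
\]
where $\Psi_n$ collects the terms containing the index $n$, treat $\Psi_{<n}$ as in Tang--Yan, and show that $\Psi_n$ is a negative combination of $(\Gamma_{in}^k)^2$ using exactly this extremality of $\lambda_n$.
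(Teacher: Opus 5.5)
Step 1 of your proposal is fine, and it is essentially the paper's reduction in disguise. Replacing $\mathfrak a$ by $\mathfrak a-cg$ gives $\lambda_n=0$, $\sigma_n\equiv0$, $\sigma_{n-1}\neq0$, and all eigenvalues of one sign. Your $f$ is an affine function of this $\sigma_{n-1}$. The paper then simply applies Theorem \ref{teo:M^n-(n-1)-intro2} (the vanishing-determinant case), not Theorem \ref{T-Y}. The genuine gaps are in Steps 2 and 3.

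\textbf{Step 2: the sign mechanism is backwards.} By Lemma \ref{lem:a_is_Codazzian}, for a Codazzi tensor with distinct eigenvalues, $\Gamma_{ij}^k\Gamma_{ji}^k=a_{ijk}^2/\big((\lambda_i-\lambda_k)(\lambda_j-\lambda_k)\big)$.
\begin{itemize}
\item These denominators have no fixed sign: the sign depends on whether $\lambda_k$ lies between $\lambda_i$ and $\lambda_j$. So they do not recombine into squares.
\item Since $a_{ijk}$ is totally symmetric, the three contributions of each triple $\{i,j,k\}$ sum to exactly zero.
\item Hence the off-diagonal part of $\Psi$ vanishes identically, and $\Psi=\sum_{i<j,\,k\neq i,j}\Gamma_{ii}^k\Gamma_{jj}^k$.
\end{itemize}
The diagonal part cannot cancel, because it is where all the information about $\nabla f$ lives: $\Gamma_{ii}^k=e_k(\lambda_i)/(\lambda_i-\lambda_k)$. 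If it did cancel, you would get $\Psi\equiv0$. Your equality step ``forces $e_i(f)=0$'' would then have nothing to act on, since vanishing of $\Gamma_{ij}^k$ for distinct indices says nothing about $e_i(f)$.

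What is needed is $\Psi=\frac12\sum_k G(k)\,(e_kf)^2$ with every $G(k)<0$.
\begin{itemize}
\item For $k<n$, this is the Tang--Wei--Yan inequality (Lemma \ref{lem-TWY}) applied to $\lambda_1,\dots,\lambda_{n-1}$. It applies because $\Gamma_{nn}^k=0$, and it needs $n-1\ge3$.
\item For $k=n$, set $d_i=1/P'(\lambda_i)$ with $P(x)=(x-c)\tilde P(x)$. Using $\sum_{i=1}^n d_i=0$, one gets $G(n)=d_n^2-\sum_{i<n}d_i^2$.
\item Extremality of $c$ (all $\lambda_i-c$ of one sign) then gives $|P'(\lambda_{i_0})|<|P'(c)|$ for the eigenvalue $\lambda_{i_0}$ nearest to $c$. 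Hence $d_n^2<d_{i_0}^2$ and $G(n)<0$.
\end{itemize}
This is the one place where maximality or minimality enters. It concerns the coefficient of $(e_nf)^2$, not a combination of terms $(\Gamma_{in}^k)^2$.

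\textbf{Step 3: no boundary control.} $X$ is singular where eigenvalues collide, and your proposed fixes do not work:
\begin{itemize}
\item Multiplying by a cutoff $\phi$ adds the term $\langle\nabla\phi,X\rangle$, which has no sign.
\item Replacing $X$ by $\sum_i h_i\nabla_{e_i}e_i$ destroys the identity \eqref{eq:divX}.
\end{itemize}
The paper's mechanism runs as follows.
\begin{itemize}
\item Only one coefficient of $\tilde P$ varies, so its graph is a vertical translate of a fixed polynomial.
\item Hence the eigenvalues are distinct on $D_0=f^{-1}\big((\min f,\max f)\big)$. They can merge only at the extreme values of $f$: at a local maximum of the fixed polynomial at one end, and at a local minimum at the other.
\item One integrates $\dv X\ge0$ over $f^{-1}\big((a_k,b_k)\big)$, with regular values $a_k\to\min f$ and $b_k\to\max f$.
\item On the boundary one uses $\langle X,\nabla f\rangle=\sum_k u_k(e_kf)^2$. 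For $k<n$, $u_k$ is a multiple of $\tilde P''(\lambda_k)/\tilde P'(\lambda_k)^2$, which blows up only with the favourable sign at each end (Claims \ref{claim-p1} and \ref{claim-p2}).
\item Meanwhile $u_n=\pm1/P'(c)$ stays bounded, because $P'(c)=\pm\sigma_{n-1}(A-c\,\mathrm{Id})$ is nonzero on the compact manifold $M$.
\item So the outward flux is at most $C|\nabla f|$, and Lemma \ref{lema:boundary} (Almeida--Brito) sends it to $0$.
\item This forces $\dv X\equiv0$, hence $\Psi\equiv0$ and $\nabla f\equiv0$ on $D_0$, a contradiction.
\end{itemize}
Without this one-sided boundary estimate, the integrated inequality your argument relies on is not available.
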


We apply Theorem \ref{teo:M^n-(n-1)-m} to obtain the following rigidity result for closed hypersurfaces in $\mathbb S^{n+1}$. 
\begin{corollary}
\label{cor:hypersphere}
Let $M^n$ $(n>3)$ be a closed hypersurface in the $(n+1)$-dimensional unit sphere $\mathbb S^{n+1}$.   Assume that:
\begin{enumerate}[(i)]
  \item $S_M\ge0$ on $M$;
  \item the maximal principal curvature   or the minimal principal curvature is simple and constant;
  \item each $r$-th  mean curvatures $H_r$ is constant for $r=1,\dots,n-2$.
  
\end{enumerate}
Then  $M$ is isoparametric.
Moreover, if the principal curvatures are distinct somewhere, then $S_M\equiv0$. 
\end{corollary}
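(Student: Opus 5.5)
The corollary follows by applying Theorem~\ref{teo:M^n-(n-1)-m} to the second fundamental form of $M$.

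\textbf{Setup.} Let $\nu$ be a local unit normal field and let $\mathfrak{a}=h$ be the second fundamental form, with shape operator $A$. Since $M$ is closed, hence compact, $M$ is orientable or passes to a double cover; either way we may work locally, because all conditions are invariant under $\nu\mapsto-\nu$ up to the sign conventions discussed below.

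\textbf{Step 1: $h$ is Codazzian.} The ambient space $\mathbb S^{n+1}$ has constant curvature. The Codazzi equation therefore reads $(\nabla_X h)(Y,Z)=(\nabla_Y h)(X,Z)$, so $h$ is a smooth Codazzi symmetric $(0,2)$-tensor.

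\textbf{Step 2: the $\sigma_k(A)$ are constant.} The $r$-th mean curvature is $H_r=\binom{n}{r}^{-1}\sigma_r(\lambda_1,\dots,\lambda_n)$. Hypothesis (iii) therefore gives that $\sigma_k(A)$ is constant for $k=1,\dots,n-2$. This is hypothesis (ii) of Theorem~\ref{teo:M^n-(n-1)-m}.

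\textbf{Step 3: the extremal eigenvalue.} Hypothesis (ii) of the corollary is exactly hypothesis (iii) of the theorem. One subtlety is the orientation. If the normal is flipped, every eigenvalue changes sign, so the maximal and minimal eigenvalues are interchanged, and $\sigma_k$ is multiplied by $(-1)^k$. Each of the relevant conditions is symmetric under this change:
\begin{itemize}
\item constancy of $\sigma_k$ is preserved;
\item the pair ``max simple and constant or min simple and constant'' is preserved as a disjunction.
\end{itemize}
If $M$ is not orientable, we pass to the orientable double cover. It is closed, has the same scalar curvature, and carries a global shape operator. Since its eigenvalues are constant if and only if those of $M$ are, nothing is lost.

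\textbf{Step 4: apply the theorem.} Hypothesis (i) is the same in both statements, and the dimension condition $n>3$ matches. Theorem~\ref{teo:M^n-(n-1)-m} then yields that all eigenvalues of $A$, i.e. the principal curvatures, are constant. A hypersurface of $\mathbb S^{n+1}$ with constant principal curvatures is isoparametric, by Cartan's classical characterization. The final claim is immediate: if the principal curvatures are distinct somewhere, the last assertion of the theorem gives $S_M\equiv0$.

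\textbf{Main obstacle.} The only point requiring care is the global definition of $A$, i.e. the orientation issue. This is settled by the double cover argument, together with the observation that the hypotheses are invariant under $A\mapsto -A$.
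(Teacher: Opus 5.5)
Your proposal is correct and is essentially the paper's proof. Like the paper, you note that the shape operator of a hypersurface in $\mathbb S^{n+1}$ is Codazzian and that $H_r=\binom{n}{r}^{-1}\sigma_r(A)$, apply Theorem~\ref{teo:M^n-(n-1)-m} to get constant principal curvatures (hence isoparametric), and use its last assertion to get $S_M\equiv 0$ when the principal curvatures are distinct somewhere.

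Your orientation and double-cover remarks are extra care that the paper omits, since it simply treats $A$ as a globally defined tensor. They are harmless, though for a non-orientable $M$ hypothesis (ii) only makes sense once stated on the oriented cover.
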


In this paper, we also present applications to other Codazzi symmetric $(0,2)$-tensors.   Recall that a Riemannian manifold $M^n$ is said to have \emph{harmonic curvature} if the Riemann  tensor is  Codazzian.
A corollary of Theorem \ref{teo:M^n-(n-1)-intro2} is the following rigidity property for  Ricci tensor on 
the Riemannian manifolds with harmonic curvature.
\begin{corollary}
\label{cor:ricci}
Let $(M^n,g)$, $n>3$, be a closed Riemannian manifold with harmonic curvature and scalar curvature $S\ge 0$.  
Let $\operatorname{Ric}$ denote the Ricci tensor.
Assume that:
\begin{enumerate}[(i)]
  \item the elementary symmetric functions $\sigma_k(\Ric)$ are constant for $k=1,\dots,n-2$,  $\sigma_n(\Ric)\equiv 0$, and  $\sigma_{n-1}(\Ric)$ is nonzero everywhere;
  \item the eigenvalues of the Ricci tensor are everywhere nonnegative.
\end{enumerate}
Then all eigenvalues of the Ricci tensor are constant. Moreover, if the Ricci tensor has $n$ distinct eigenvalues somewhere, then $S\equiv 0$.
\end{corollary}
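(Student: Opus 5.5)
The plan is to derive Corollary \ref{cor:ricci} directly from Theorem \ref{teo:M^n-(n-1)-intro2}, applied to the symmetric $(0,2)$-tensor $\mathfrak{a}=\Ric$ with $A$ the associated Ricci endomorphism. The work consists of checking the hypotheses of that theorem one at a time.

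\emph{Codazzi property.} Harmonic curvature means $\delta \mathrm{Rm}=0$. By the contracted second Bianchi identity,
\[
(\dv \mathrm{Rm})(X,Y,Z)=(\nabla_X \Ric)(Y,Z)-(\nabla_Y\Ric)(X,Z).
\]
Hence harmonic curvature is equivalent to $\Ric$ being a Codazzi tensor. If instead one reads the definition in the introduction literally, as ``the Riemann tensor is Codazzian'', then tracing the Codazzi identity for $\mathrm{Rm}$ gives the Codazzi identity for $\Ric$ directly. In either reading, $\mathfrak{a}=\Ric$ is a smooth Codazzi symmetric tensor.

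\emph{Remaining hypotheses.} Condition (i) of Theorem \ref{teo:M^n-(n-1)-intro2} is the assumption $S\ge 0$, where $S=\operatorname{tr}\Ric=\sigma_1(\Ric)$. Condition (ii) of the theorem is exactly hypothesis (i) of the corollary: $\sigma_k(\Ric)$ constant for $1\le k\le n-2$, $\sigma_n(\Ric)=0$, and $\sigma_{n-1}(\Ric)\neq0$. Condition (iii) of the theorem, that all eigenvalues have one sign everywhere, follows from hypothesis (ii) of the corollary, which says the Ricci eigenvalues are nonnegative everywhere.

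\emph{Conclusion.} Theorem \ref{teo:M^n-(n-1)-intro2} now gives that $\sigma_{n-1}(\Ric)$ is constant and that all eigenvalues of $\Ric$ are constant. Its last assertion gives $S_M\equiv0$ whenever the eigenvalues are distinct at some point, which is the case exactly when $\Ric$ has $n$ distinct eigenvalues there. Since $S_M$ is the scalar curvature $S$, this is the final statement of the corollary.

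The only step needing care is the Codazzi property, and it is a standard consequence of the Bianchi identity. Everything else is bookkeeping, so no real obstacle is expected.
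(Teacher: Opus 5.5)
Your proposal is correct and is the paper's argument: the contracted second Bianchi identity makes harmonic curvature equivalent to $\Ric$ being Codazzian, and then Theorem~\ref{teo:M^n-(n-1)-intro2} applied to $\mathfrak{a}=\Ric$ gives both conclusions, with the hypotheses checked as you do, using $S=\sigma_1(\Ric)$. As a side remark, nonnegativity together with $\sigma_{n-1}(\Ric)\neq 0$ forces $n-1$ positive eigenvalues, so $S>0$ everywhere; the ``moreover'' clause therefore says that $\Ric$ can never have $n$ distinct eigenvalues under these hypotheses.
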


\begin{remark}\label{rema-1}
   It is worth pointing out that, in the study of  gradient shrinking Ricci solitons,  H.-D. Cao proposed a rigidity conjecture asserting that a complete \(n\)-dimensional gradient shrinking Ricci soliton \((M^n,g,f)\) with constant scalar curvature must be rigid, that is, a finite quotient of \(N^k\times \mathbb{R}^{n-k}\) for some Einstein manifold \(N\) of positive scalar curvature. An observation is that a constant scalar curvature Ricci shrinker with nonconstant potential function necessarily  satisfies  that $\sigma_1, \sigma_2$ are constant and  $\sigma_n=0$.  Without assuming the harmonicity of the curvature tensor,  Cheng and Zhou \cite{Cheng-Zhou23}, together with the work of Petersen and Wylie \cite{PW09} and Fernández-López and García-Río \cite{FG16},  confirmed this conjecture in dimension four, proving that a four-dimensional complete noncompact gradient shrinking Ricci soliton has constant scalar curvature if and only if it is rigid, i.e., it is isometric to either an Einstein manifold, or to a finite quotient of the Gaussian shrinking soliton \(\mathbb{R}^4\), \(\mathbb{S}^2\times \mathbb{R}^2\), or \(\mathbb{S}^3\times \mathbb{R}\). 
\end{remark}

Remark \ref{rema-1}  suggests that the constancy of several symmetric functions of a tensor field may strongly restrict its eigenvalue structure, thereby motivating the following question.

\begin{question}
Suppose $\mathfrak{a}$ is a smooth symmetric tensor field of type \((0,2)\) on \(M^n\), and let \(A\) be the associated \((1,1)\)-tensor field.
Under what conditions, and for which choice of \(n-1\) of the elementary symmetric functions \(\sigma_k(A)\), does the constancy of these functions imply that all eigenvalues of \(A\) are constant?
\end{question}

This paper is organized as follows.
Section \ref{intro} is the introduction.
Section \ref{V} is devoted to the proof of Theorem \ref{thm:divX-intro2}.
In Section \ref{Apli-Tensor}, some preliminaries on $(0,2)$-tensors are presented.
Section~\ref{lambda-ij} establishes several auxiliary results, some of which are used to prove Theorem~\ref{teo:M^n1-1}, while others are used, together with Theorem~\ref{teo:M^n1-1}, to prove Theorem~\ref{T-Y}. In Section~\ref{sec:5}, the proofs of Theorem~\ref{teo:M^n1-1} and Theorem~\ref{T-Y} are given.
In Section \ref{sec:6}, we prove Theorems  \ref{teo:M^n-(n-1)-intro2} and \ref{teo:M^n-(n-1)-m}.
In Section \ref{sec:app}, the applications are discussed and Section \ref{appendix} is the appendix.

\begin{acknowledgment}
The authors thank Professor F.~Fontenele for helpful comments and suggestions.
\end{acknowledgment}

\section{A Vector field}\label{V}

Let $M^n$ be an $n$-dimensional Riemannian manifold. We define the vector field $X$ introduced in the Introduction and prove Theorem \ref{thm:divX-intro2}.

\begin{theorem}[Theorem \ref{thm:divX-intro2}]
\label{prop:X,div(X)}
    Let  $\{e_1,\dots,e_n\}$ be  an  orthonormal frame  on an open subset  $D$  of an $n$-dimensional  Riemannian manifold $M^n$.
Let $X$ be the vector field defined by
\begin{equation}
\label{def:X}
X:=\sum_{i=1}^n\nabla_{e_i}e_{i}.
\end{equation}
Then the divergence of $X$ satisfies
\begin{equation}
\label{div(X)}
\dv (X) =\frac12 S_M - \Psi ,
\end{equation}
where $S_M$ denotes the scalar curvature of $M$, and
\begin{equation}
\label{Psi}
\Psi := \sum_{\substack{i<j \\ k\neq i, j}}^n \big( \Gamma_{ii}^k\Gamma_{jj}^k - \Gamma_{ij}^k\Gamma_{ji}^k \big),    \ \Gamma_{ij}^k=\langle \nabla_{e_i}e_{j}, e_k\rangle.
\end{equation}
\end{theorem}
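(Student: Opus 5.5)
Write $\Gamma_{ij}^k=\langle\nabla_{e_i}e_j,e_k\rangle$. Orthonormality gives the antisymmetry $\Gamma_{ij}^k=-\Gamma_{ik}^j$, and in particular $\Gamma_{ii}^i=0$ and $\Gamma_{ik}^k=0$. The plan is to compute both sides in the frame and compare.

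\emph{Step 1: the divergence.} We have $X=\sum_{k}\big(\sum_i\Gamma_{ii}^k\big)e_k$. Using $\dv(Y)=\sum_k\langle\nabla_{e_k}Y,e_k\rangle$, we get
\[
\dv X=\sum_{i,k}e_k(\Gamma_{ii}^k)+\sum_{i,k,l}\Gamma_{ii}^l\Gamma_{kl}^k .
\]
Since $\Gamma_{kl}^k=-\Gamma_{kk}^l$, the last sum equals $-\sum_l\big(\sum_i\Gamma_{ii}^l\big)\big(\sum_k\Gamma_{kk}^l\big)=-|X|^2$. Hence $\dv X=\sum_{i,k}e_k(\Gamma_{ii}^k)-|X|^2$.

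\emph{Step 2: the scalar curvature.} Write
\[
S_M=\sum_{i\ne j}\langle R(e_i,e_j)e_j,e_i\rangle ,\qquad R(e_i,e_j)e_j=\nabla_{e_i}\nabla_{e_j}e_j-\nabla_{e_j}\nabla_{e_i}e_j-\nabla_{[e_i,e_j]}e_j .
\]
Expanding each term in connection coefficients gives
\[
\langle R(e_i,e_j)e_j,e_i\rangle=e_i(\Gamma_{jj}^i)-e_j(\Gamma_{ij}^i)+\sum_l\big(\Gamma_{jj}^l\Gamma_{il}^i-\Gamma_{ij}^l\Gamma_{jl}^i\big)-\sum_l\big(\Gamma_{ij}^l-\Gamma_{ji}^l\big)\Gamma_{lj}^i .
\]
The term $e_j(\Gamma_{ij}^i)$ equals $-e_j(\Gamma_{ii}^j)$. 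Summing over $i\ne j$, each derivative term contributes $\sum_{i,j}e_j(\Gamma_{ii}^j)$, so together they give $2\sum e_k(\Gamma_{ii}^k)$. This is why the factor $\tfrac12$ appears in front of $S_M$.

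\emph{Step 3: the quadratic terms.} Rewrite every quadratic term via antisymmetry in the form $\Gamma_{\cdot\cdot}^{k}\Gamma_{\cdot\cdot}^{k}$ and split the index sums according to whether $k\in\{i,j\}$ or $k\notin\{i,j\}$. The goal is to show
\[
\tfrac12\,(\text{quadratic part of }S_M)+|X|^2=\Psi .
\]
The first quadratic sum becomes $-\sum\Gamma_{jj}^l\Gamma_{ii}^l$. Over $i\ne j$ this equals $-|X|^2+\sum_{i,l}(\Gamma_{ii}^l)^2$, so the $|X|^2$ cancels, leaving diagonal pieces plus $\sum_{i<j,\,k\ne i,j}\Gamma_{ii}^k\Gamma_{jj}^k$. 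The remaining terms, $\Gamma_{ij}^l\Gamma_{jl}^i$ and the bracket terms, should after symmetrization in $i\leftrightarrow j$ produce $-\sum_{i<j,\,k\ne i,j}\Gamma_{ij}^k\Gamma_{ji}^k$. They should also exactly cancel the leftover pieces with $k\in\{i,j\}$, such as $(\Gamma_{ii}^j)^2$ and $(\Gamma_{ij}^j)$-type terms. Note that $\Gamma_{ij}^j=0$ kills several of these directly.

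\emph{Main obstacle.} Steps 1 and 2 are routine; the real work is the bookkeeping in Step 3. Specifically, one must verify that all terms with $k\in\{i,j\}$ cancel, leaving precisely the two families in $\Psi$ with the right signs. I would organize the check by listing the four index types $l=i$, $l=j$, and $l\notin\{i,j\}$ (both orders) separately. As a sanity test I would check the case $n=2$, where $\Psi=0$ and the formula should reduce to the classical $\dv X=K$ for the geodesic curvatures of the frame.
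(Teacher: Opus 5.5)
Your proposal is correct and is essentially the paper's proof. You write $\operatorname{div}X=\sum_{i,k}e_k(\Gamma_{ii}^k)-|X|^2$, trade the derivative terms for sectional curvatures (the paper pairs $i<j$ rather than summing over $i\neq j$ and halving), and cancel quadratic terms; the one nontrivial identity you leave as ``should'' is exactly the paper's Claim~\ref{claim:gamma}, $\sum_{i<j,\,k\neq i,j}(\Gamma_{ij}^k-\Gamma_{ji}^k)\Gamma_{kj}^i=2\sum_{i<j,\,k\neq i,j}\Gamma_{ij}^k\Gamma_{ji}^k$, which is proved by cyclically relabeling triples of distinct indices and using $\Gamma_{ij}^k=-\Gamma_{ik}^j$.

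One correction to your Step 3 narrative: $|X|^2$ only half cancels, since $\tfrac12\bigl(-|X|^2+\sum_{i,l}(\Gamma_{ii}^l)^2\bigr)+|X|^2=\sum_{i\neq l}(\Gamma_{ii}^l)^2+\sum_{i<j,\,k\neq i,j}\Gamma_{ii}^k\Gamma_{jj}^k$. The surviving $\tfrac12|X|^2$ is what supplies the $\Gamma_{ii}^k\Gamma_{jj}^k$ family in $\Psi$, and the diagonal sum is then cancelled by the $l\in\{i,j\}$ part of your bracket term $-\sum_l(\Gamma_{ij}^l-\Gamma_{ji}^l)\Gamma_{lj}^i$.
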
  

\begin{proof} Since
\begin{equation}
\label{eq:X}
X=\sum_{i=1}^n\big(\sum_{j=1}^n\langle \nabla_{e_i}e_i, e_j\rangle e_j\big) =\sum_{j=1}^n\big(\sum_{i=1}^n\langle \nabla_{e_i}e_i, e_j\rangle \big)e_j,
\end{equation}
 the divergence of $X$ is 
\begin{eqnarray*}
\dv(X)&=& \sum_{j=1}^n\langle \nabla_{e_j}X, e_j\rangle=\sum_{j=1}^n(e_j\langle X, e_j\rangle-\langle X, \nabla_{e_j} e_j\rangle)\\
&=&\sum_{j=1}^n\big[e_j\big(\sum_{i=1}^n\langle \nabla_{e_i}e_i, e_j\rangle\big)-\langle \sum_{k=1}^n\big(\sum_{i=1}^n\langle \nabla_{e_i}e_i, e_k\rangle\big)e_k, \nabla_{e_j}e_j\rangle\big]\\
&=&\sum_{i,j=1}^ne_j\langle \nabla_{e_i}e_i, e_j\rangle- \sum_{i,j,k=1}^n \Gamma_{ii}^k\Gamma_{jj}^k.
\end{eqnarray*}
Since  $\langle e_{j}, e_k\rangle=\delta_{jk}$ and $\Gamma_{ij}^k=\langle \nabla_{e_i}e_j,e_k\rangle$, we have
$\Gamma_{ii}^{i}=\langle \nabla_{e_i}e_i, e_i\rangle=0$. Then
\begin{eqnarray}
\label{eq:div(X)}
\dv(X)
&=&\sum_{ i< j}^n e_j\langle \nabla_{e_i}e_i, e_j\rangle+\sum_{i>j}^n e_j\langle \nabla_{e_i}e_i, e_j\rangle- \sum_{ k\neq i,j}^n \Gamma_{ii}^k\Gamma_{jj}^k \nonumber\\
&=& \sum_{ i<j}^n(  e_j\langle \nabla_{e_i}e_i, e_j\rangle+e_i\langle \nabla_{e_j}e_j, e_i\rangle) - \sum_{\substack{i<j \\ k\neq i,j}}^n \Gamma_{ii}^k\Gamma_{jj}^k-\sum_{\substack{i>j \\ k\neq i,j}}^n\Gamma_{ii}^k\Gamma_{jj}^k-\sum_{k\neq i}^n(\Gamma_{ii}^k)^2\nonumber\\
&=&\sum_{ i<j}^n(  e_j\langle \nabla_{e_i}e_i, e_j\rangle+e_i\langle \nabla_{e_j}e_j, e_i\rangle) - 2\sum_{\substack{i<j \\ k\neq i,j}}^n \Gamma_{ii}^k\Gamma_{jj}^k-\sum_{k\neq i}^n(\Gamma_{ii}^k)^2.
\end{eqnarray}
Fix $i$ and $j$. We next compute the two derivative terms appearing above.
\begin{eqnarray}
\label{Gamma_jj^i;i}
 e_i\langle \nabla_{e_j}e_j, e_i\rangle 
&=& \langle \nabla_{e_i} \nabla_{e_j}e_j, e_i\rangle +\langle \nabla_{e_i}e_i, \nabla_{e_j}e_j\rangle  \nonumber\\
&=& \langle \nabla_{e_i} \nabla_{e_j}e_j, e_i\rangle +\sum_{k=1}^n \Gamma_{ii}^k \Gamma_{jj}^k.
\end{eqnarray}
\begin{eqnarray}
\label{eq:Gamma_ii^j;j}
e_j\langle \nabla_{e_i}e_i, e_j\rangle
&=& -e_j\langle \nabla_{e_i}e_j, e_i\rangle =-\langle \nabla_{e_j} \nabla_{e_i}e_j, e_i\rangle -\langle \nabla_{e_i}e_j, \nabla_{e_j}e_i\rangle \nonumber\\
&=& -\langle \nabla_{e_j} \nabla_{e_i}e_j, e_i\rangle -\sum_{k=1}^n\Gamma_{ij}^k \Gamma_{ji}^k.
\end{eqnarray}
It follows from (\ref{Gamma_jj^i;i}) and (\ref{eq:Gamma_ii^j;j}) that 
\begin{eqnarray}\label{eq:curvature}
    e_j\langle \nabla_{e_i}e_i, e_j\rangle+ e_i\langle \nabla_{e_j}e_j, e_i\rangle&=& \langle \nabla_{e_i} \nabla_{e_j}e_j -\nabla_{e_j} \nabla_{e_i}e_j, e_i\rangle -\sum_{k=1}^n \Gamma_{ij}^k \Gamma_{ji}^k +\sum_{k=1}^n \Gamma_{ii}^k \Gamma_{jj}^k\nonumber\\
    &=&\langle R(e_i,e_j)e_j,e_i \rangle +\langle \nabla_{[e_i,e_j]}e_j,e_i\rangle -\sum_{k=1}^n \Gamma_{ij}^k \Gamma_{ji}^k +\sum_{k=1}^n\Gamma_{ii}^k \Gamma_{jj}^k\nonumber\\
    &=&\langle R(e_i,e_j)e_j,e_i \rangle +\sum_{k=1}^n (\Gamma_{ij}^k - \Gamma_{ji}^k)\Gamma_{kj}^i - \sum_{k=1}^n \Gamma_{ij}^k \Gamma_{ji}^k +\sum_{k=1}^n \Gamma_{ii}^k \Gamma_{jj}^k.
\end{eqnarray}
Substituting \eqref{eq:curvature} into (\ref{eq:div(X)}) yields
\begin{eqnarray}
\label{div}
\dv(X) 
&=& \frac{1}{2}S_M + \sum_{i<j}^n\big[ \sum_{k=1}^n (\Gamma_{ij}^k - \Gamma_{ji}^k)\Gamma_{kj}^i - \sum_{k=1}^{n} \Gamma_{ij}^k \Gamma_{ji}^k +\sum_{k=1}^{n} \Gamma_{ii}^k \Gamma_{jj}^k \big]  - 2\sum_{\substack{i<j \\ k\neq i,j}}^n \Gamma_{ii}^k\Gamma_{jj}^k-\sum_{k\neq i}^n(\Gamma_{ii}^k)^2\nonumber\\
&=& \frac{1}{2}S_M + \sum_{\substack{i<j \\ k\neq i,j}}^n [(\Gamma_{ij}^k - \Gamma_{ji}^k)\Gamma_{kj}^i -\Gamma_{ij}^k \Gamma_{ji}^k -\Gamma_{ii}^k \Gamma_{jj}^k ] +\sum_{i<j}^n[(\Gamma_{ij}^i)\Gamma_{ij}^i +(-\Gamma_{ji}^j)\Gamma_{jj}^i ]-\sum_{i\neq j}^n (\Gamma_{ii}^j)^2\nonumber\\
&=& \frac{1}{2}S_M + \sum_{\substack{i<j \\ k\neq i,j}}^n [(\Gamma_{ij}^k - \Gamma_{ji}^k)\Gamma_{kj}^i -\Gamma_{ij}^k \Gamma_{ji}^k -\Gamma_{ii}^k \Gamma_{jj}^k ] +\sum_{i<j}^n[(\Gamma_{ii}^j)^2 + (\Gamma_{jj}^i)^2]-\sum_{i\neq j}^n (\Gamma_{ii}^j)^2
\end{eqnarray}
In the second equality of \eqref{div}, we separated the terms with $k\neq i,j$ from those with $k=i,j$, and used $\Gamma_{lk}^k=0$.   In the last line of \eqref{div}, we used $-\Gamma_{ji}^j=\Gamma_{jj}^i$.

\begin{claim}
\label{claim:gamma} It holds that
    \[
    \sum_{\substack{i<j \\ k\neq i,j}}^n  (\Gamma_{ij}^k-\Gamma_{ji}^k)\Gamma_{kj}^i = 2 \sum_{\substack{i<j \\ k\neq i,j}}^n\Gamma_{ij}^k\Gamma_{ji}^k.
    \]
\end{claim}
Indeed,
\begin{eqnarray}\label{eq:eq:Gamma_ijkGamma_jik-r}
\sum_{\substack{i<j \\ k \neq i,j}}^n\Gamma_{ij}^k \Gamma_{ji}^k &=& \sum_{i<j<k}^n \Gamma_{ij}^k \Gamma_{ji}^k + \sum_{i<k<j}^n \Gamma_{ij}^k \Gamma_{ji}^k + \sum_{k<i<j}^n \Gamma_{ij}^k \Gamma_{ji}^k \nonumber\\
&=& \sum_{i<j<k}^n \Gamma_{ij}^k \Gamma_{ji}^k + \sum_{i<j<k}^n \Gamma_{ik}^j \Gamma_{ki}^j + \sum_{i<j<k}^n \Gamma_{jk}^i \Gamma_{kj}^i \nonumber\\
&=& \sum_{i<j<k}^n ( \Gamma_{ij}^k \Gamma_{ji}^k +\Gamma_{ik}^j \Gamma_{ki}^j +\Gamma_{jk}^i \Gamma_{kj}^i).
\end{eqnarray}
On the other hand, using $\Gamma_{ij}^k\Gamma_{kj}^i=\Gamma_{ik}^j \Gamma_{ki}^j$ and $-\Gamma_{ji}^k\Gamma_{kj}^i=\Gamma_{jk}^i \Gamma_{kj}^i$, we have
\begin{eqnarray}
\label{eq:Gamma_ijkGamma_jik}
\sum_{\substack{i<j \\ k\neq i,j}}^n (\Gamma_{ij}^k-\Gamma_{ji}^k)\Gamma_{kj}^i &=& \sum_{\substack{i<j \\ k\neq i,j}}^n (\Gamma_{ik}^j \Gamma_{ki}^j +\Gamma_{jk}^i \Gamma_{kj}^i) \nonumber\\
&=& \sum_{i<j<k}^n (\Gamma_{ik}^j \Gamma_{ki}^j +\Gamma_{jk}^i \Gamma_{kj}^i) + \sum_{i<k<j}^n(\Gamma_{ik}^j \Gamma_{ki}^j +\Gamma_{jk}^i \Gamma_{kj}^i) + \sum_{k<i<j}^n (\Gamma_{ik}^j \Gamma_{ki}^j +\Gamma_{jk}^i \Gamma_{kj}^i) \nonumber\\
&=& \sum_{i<j<k}^n (\Gamma_{ik}^j \Gamma_{ki}^j +\Gamma_{jk}^i \Gamma_{kj}^i) + \sum_{i<j<k}^n (\Gamma_{ij}^k \Gamma_{ji}^k +\Gamma_{kj}^i \Gamma_{jk}^i) + \sum_{i<j<k}^n (\Gamma_{ji}^k \Gamma_{ij}^k +\Gamma_{ki}^j \Gamma_{ik}^j) \nonumber\\
&=& 2\sum_{i<j<k}^n (\Gamma_{ij}^k \Gamma_{ji}^k +\Gamma_{ik}^j \Gamma_{ki}^j +\Gamma_{jk}^i \Gamma_{kj}^i ).
\end{eqnarray}
Thus Claim \ref{claim:gamma} holds from \eqref{eq:eq:Gamma_ijkGamma_jik-r} and \eqref{eq:Gamma_ijkGamma_jik}.

By Claim \ref{claim:gamma} and (\ref{div}), we have
\begin{eqnarray*}
\dv(X) &=& \frac{1}{2}S_M + \sum_{\substack{i<j \\ k\neq i,j}}^n ( \Gamma_{ij}^k \Gamma_{ji}^k -\Gamma_{ii}^k \Gamma_{jj}^k ) +\sum_{i<j}^n[(\Gamma_{ii}^j)^2 + (\Gamma_{jj}^i)^2]-\sum_{i\neq j}^n (\Gamma_{ii}^j)^2\\
&=& \frac{1}{2}S_M - \sum_{\substack{i<j \\ k\neq i,j}}^n(\Gamma_{ii}^k \Gamma_{jj}^k -\Gamma_{ij}^k \Gamma_{ji}^k ) +\sum_{i\neq j}^n (\Gamma_{ii}^j)^2-\sum_{i\neq j}^n (\Gamma_{ii}^j)^2=\frac{1}{2}S_M - \Psi,
\end{eqnarray*}
where
\begin{equation*}
\Psi = \sum_{\substack{i<j \\ k\neq i, j}}^n (\Gamma_{ii}^k\Gamma_{jj}^k - \Gamma_{ij}^k\Gamma_{ji}^k )=\sum_{\substack{i<j \\ k\neq i,j}}^n\det
\begin{pmatrix}
\Gamma_{ii}^k & \Gamma_{ij}^k\\
\Gamma_{ji}^k & \Gamma_{jj}^k
\end{pmatrix}.
\end{equation*}
\end{proof}

\begin{remark}
    When $n=2$,  one has $\Psi=0$,  and hence $\dv(X)=K$,   where $K$ is the Gaussian curvature.
\end{remark}
\begin{cor}
\label{cor:X,div(X)}
     Let $D$ be a bounded domain of an $n$-dimensional Riemannian manifold $M^n$ with an orthonormal frame $\{e_1,\hdots, e_n\}$, and let  $X:=\sum_{i=1}^n\nabla_{e_i}e_{i}.$ Then
\begin{equation}
\label{div(X1)}
\int_{\partial D} \langle X,\nu\rangle =\int_D \frac12 S_M -\int_D \Psi ,
\end{equation}
where $S_M$ denotes the scalar curvature of $M$. If $\int_{\partial D} \langle X,\nu\rangle =0$ (in particular, if $\partial D=\emptyset$),   then
\[ \int_D \frac12 S_M =\int_D \Psi. \]
\end{cor}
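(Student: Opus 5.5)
The corollary follows by integrating the pointwise identity \eqref{div(X)} of Theorem \ref{prop:X,div(X)} over $D$ and applying the divergence theorem. I take $D$ to be a bounded domain with (piecewise) smooth boundary and outward unit normal $\nu$. I also assume the frame $\{e_1,\dots,e_n\}$, and hence $X=\sum_i\nabla_{e_i}e_i$, extends smoothly up to $\partial D$, for instance because it is defined on an open neighbourhood of $\overline D$. Then $X$ is a smooth vector field on $\overline D$. The functions $S_M$ and $\Psi$ are continuous on the compact set $\overline D$, since $\Psi$ is a polynomial in the connection coefficients $\Gamma_{ij}^k$, which are smooth. So every integral in \eqref{div(X1)} is finite.

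First, the divergence theorem on $D$ gives
\[
\int_D \dv(X)=\int_{\partial D}\langle X,\nu\rangle .
\]
Second, I substitute $\dv(X)=\tfrac12 S_M-\Psi$ from Theorem \ref{prop:X,div(X)}, which holds pointwise on $D$, into the left-hand side. Splitting the integral by linearity gives exactly \eqref{div(X1)}.

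For the second assertion, if $\int_{\partial D}\langle X,\nu\rangle=0$, the left side of \eqref{div(X1)} vanishes and rearranging gives $\int_D\tfrac12 S_M=\int_D\Psi$. When $\partial D=\emptyset$, the boundary integral is over the empty set and so is zero. In that case $D$ is a closed (compact, boundaryless) region carrying a global orthonormal frame, and the divergence theorem reads $\int_D\dv(X)=0$ directly.

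The only real point of care is regularity at the boundary, which is the hypothesis needed for the divergence theorem to apply. If the frame is only defined on the open set $D$, one would instead exhaust $D$ by smooth compact subdomains and interpret the boundary term as a limit. I would state the corollary under the standing assumption that the frame extends to $\overline D$, which is how it will be used later, so there is no genuine obstacle.
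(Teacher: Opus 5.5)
Your proposal is correct and matches the paper, which states this corollary without separate proof as the immediate consequence of integrating the pointwise identity $\dv(X)=\frac12 S_M-\Psi$ over $D$ and applying the divergence theorem. Your remark that the frame should extend smoothly to $\overline D$, or else that the boundary term be read as a limit over an exhaustion, correctly makes explicit the regularity hypothesis the paper leaves implicit.
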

\begin{remark}
    For any  Riemannian metric on an $n$-dimensional torus, we can always construct a globally defined orthonormal frame.  
\end{remark}

\section{Identities for symmetric 2-tensors and connection coefficients}\label{Apli-Tensor}

As a preliminary, we present in this section several identities relating the covariant derivative of a smooth symmetric $(0,2)$-tensor to the connection coefficients. The results presented here appear, either directly or indirectly, in \cite{Almeida-Brito90} and \cite{Tang-Wei-Yan20}.

\begin{assumption}\label{assumption-3}
     Let $ D$  be an open subset of an $n$-dimensional  Riemannian manifold $M^n$. Let $\{e_1,\ldots,e_n\}$ be an orthonormal frame field on $D$ and $\{\theta_1,\ldots,\theta_n\}$ its dual coframe. Let $\mathfrak{a}$ be a smooth symmetric $(0,2)$-tensor field on $D$, denoted by
\begin{equation*}
\mathfrak{a} = \sum_{i,j=1}^{n} a_{ij} \theta_i \otimes \theta_j,
\end{equation*}
where $a_{ij} = \mathfrak{a}(e_i,e_j)$ are smooth and $a_{ij}=a_{ji}$. 

The covariant derivative of $\mathfrak{a}$ can be written as
\begin{equation*}
\nabla \mathfrak{a} = \sum_{i,j,k=1}^{n} a_{ijk} \theta_i \otimes \theta_j \otimes \theta_k, \ \text{ where } \ a_{ijk} = \nabla \mathfrak{a}(e_i,e_j,e_k)=(\nabla_{e_k}\mathfrak{a})(e_i,e_j).
\end{equation*}

   We further assume that  $\mathfrak{a}$ is diagonal on $D$ with respect to the orthonormal frame  $\{e_1,\hdots,e_n\}$, that is,  $a_{ij} = \lambda_i\delta_{ij}$ on $D$.

It follows that each $\lambda_i=a_{ii}$ is  smooth on $D$ and hence its differential $d\lambda_i$ is a  smooth 1-form, which can be expressed as
\begin{equation*}
d\lambda_i = \sum_{j=1}^{n}\lambda_{ij}\theta_j,
\end{equation*}
 where $\lambda_{ik}$ are smooth functions on $D$. 
 \end{assumption}
 
\begin{lema}
\label{lem:a_ijk}  Under Assumption \ref{assumption-3}, the following statements hold for  $(0,2)$-tensor field $\mathfrak{a}$ on $D$:
\begin{enumerate}[(i)]
    \item If $i \neq j$, then $a_{ijk} = (\lambda_{j} - \lambda_{i})\Gamma_{kj}^i$. In particular $a_{ijj} = (\lambda_{j} - \lambda_{i})\Gamma_{jj}^i$.
    \item If $\lambda_i = \lambda_j$ at $p\in D$, then $\displaystyle\sum_{k=1}^{n}a_{ijk}\theta_k = d(a_{ij})$.
    \item  $a_{iik} = \lambda_{ik}$.
\end{enumerate}
\end{lema}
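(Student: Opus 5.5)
The plan is to write out the covariant derivative componentwise and use the diagonal form $a_{ij}=\lambda_i\delta_{ij}$ from Assumption \ref{assumption-3}. By definition of the covariant derivative of a $(0,2)$-tensor,
\begin{equation*}
a_{ijk}=(\nabla_{e_k}\mathfrak{a})(e_i,e_j)=e_k\big(a_{ij}\big)-\mathfrak{a}(\nabla_{e_k}e_i,e_j)-\mathfrak{a}(e_i,\nabla_{e_k}e_j).
\end{equation*}
Expanding $\nabla_{e_k}e_i=\sum_l\Gamma_{ki}^l e_l$ and using diagonality gives
\begin{equation*}
a_{ijk}=e_k(a_{ij})-\Gamma_{ki}^j\lambda_j-\Gamma_{kj}^i\lambda_i .
\end{equation*}
This single formula will yield all three items.

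For (i), take $i\neq j$. Then $a_{ij}\equiv 0$ on $D$, so $e_k(a_{ij})=0$. Orthonormality gives $\Gamma_{ki}^j=-\Gamma_{kj}^i$, since $\langle\nabla_{e_k}e_i,e_j\rangle+\langle e_i,\nabla_{e_k}e_j\rangle=e_k\delta_{ij}=0$. Substituting, $a_{ijk}=\Gamma_{kj}^i\lambda_j-\Gamma_{kj}^i\lambda_i=(\lambda_j-\lambda_i)\Gamma_{kj}^i$. Setting $k=j$ gives the particular case $a_{ijj}=(\lambda_j-\lambda_i)\Gamma_{jj}^i$.

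For (iii), take $i=j$. Then $e_k(a_{ii})=e_k(\lambda_i)=\lambda_{ik}$ by the definition of $\lambda_{ik}$ through $d\lambda_i=\sum_j\lambda_{ij}\theta_j$. The two connection terms are $-2\lambda_i\Gamma_{ki}^i$, and this vanishes because $\Gamma_{ki}^i=\tfrac12 e_k\langle e_i,e_i\rangle=0$. Hence $a_{iik}=\lambda_{ik}$.

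For (ii), I would treat $i=j$ and $i\neq j$ separately. If $i=j$, then (iii) gives $\sum_k a_{iik}\theta_k=\sum_k\lambda_{ik}\theta_k=d\lambda_i=d(a_{ii})$, so the identity holds everywhere without using the hypothesis. If $i\neq j$, then $a_{ij}\equiv0$ on $D$, so $d(a_{ij})=0$. By (i), $\sum_k a_{ijk}\theta_k=(\lambda_j-\lambda_i)\sum_k\Gamma_{kj}^i\theta_k$, which vanishes at $p$ because $\lambda_i(p)=\lambda_j(p)$. So the identity holds at $p$.

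None of these steps is a real obstacle. The only point needing care is the index and sign convention $\Gamma_{ij}^k=\langle\nabla_{e_i}e_j,e_k\rangle$, together with the skew-symmetry $\Gamma_{ki}^j=-\Gamma_{kj}^i$, so that (i) comes out with exactly the stated sign.
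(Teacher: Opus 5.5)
Your proof is correct and follows essentially the same route as the paper: both expand $a_{ijk}=e_k(a_{ij})-\mathfrak{a}(\nabla_{e_k}e_i,e_j)-\mathfrak{a}(e_i,\nabla_{e_k}e_j)$ using diagonality and the skew-symmetry $\Gamma_{ki}^j=-\Gamma_{kj}^i$, which gives $a_{ijk}=e_k(a_{ij})+(\lambda_j-\lambda_i)\Gamma_{kj}^i$. The only difference is the order of the items: the paper reads (ii) directly off this formula and then gets (iii) as the case $i=j$ of (ii), whereas you prove (iii) directly and assemble (ii) from (i) and (iii).
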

\begin{proof}
For fixed $i,j$,  we have
\begin{eqnarray*}
a_{ijk} &=& e_k(\mathfrak{a}(e_i,e_j)) - \mathfrak{a}(\nabla_{e_k}e_i,e_j) - \mathfrak{a}(e_i,\nabla_{e_k}e_j) \nonumber\\
&=& e_k(a_{ij}) - \mathfrak{a}\big(\sum_{l=1}^{n}\Gamma_{ki}^l e_l,e_j\big) - \mathfrak{a}\big(e_i,\sum_{l=1}^{n}\Gamma_{kj}^l e_l\big) \nonumber\\
&=& e_k(a_{ij}) - \sum_{l=1}^{n}\Gamma_{ki}^l a_{lj} - \sum_{l=1}^{n}\Gamma_{kj}^l a_{il} \nonumber\\
& = & e_k(a_{ij}) - \Gamma_{ki}^j a_{jj} - \Gamma_{kj}^i a_{ii} \nonumber\\
&=& e_k(a_{ij}) + (\lambda_{j} - \lambda_{i})\Gamma_{kj}^i.
\end{eqnarray*}
Thus
\begin{equation}
\label{eq:a_ijk}
\sum_{k=1}^{n}a_{ijk}\theta_k = d(a_{ij}) + (\lambda_{j} - \lambda_{i})\sum_{k=1}^{n}\Gamma_{kj}^i \theta_k.    
\end{equation}
If $i \neq j$, then   $a_{ij}=0$. From (\ref{eq:a_ijk}) we get
\begin{equation*}
\label{eq:a_ijk_distinct}
a_{ijk} = (\lambda_{j} - \lambda_{i})\Gamma_{kj}^i. 
\end{equation*}
If $\lambda_i = \lambda_j$ at $p$, it follows from (\ref{eq:a_ijk}) that
\begin{equation*}
\sum_{k=1}^{n}a_{ijk}\theta_k = d(a_{ij}).
\end{equation*}
To prove $(iii)$, take   $i=j$.   Then $\lambda_i = \lambda_j$ and from $(ii)$ we have
\begin{equation*}
\sum_{k=1}^{n}a_{iik}\theta_k = d\lambda_i = \sum_{k=1}^{n}\lambda_{ik}\theta_k.
\end{equation*}
Thus, $a_{iik} = \lambda_{ik}$.
\end{proof}
\begin{defi}
   A symmetric $(0,2)$-tensor $\mathfrak{a}$ on a subset $D\subset M$ is called Codazzian  if \[(\nabla_X \mathfrak{a})(Y,Z)=(\nabla_Y \mathfrak{a})(X,Z)\] for all vector fields $X,Y $, and $Z$ on $D$.
    Equivalently, with respect to any local frame $\{e_i\}$, the components  
 $a_{ijk}$  are symmetric in  all indices $i, j, k$.
   
\end{defi}

\begin{lema}
\label{lem:a_is_Codazzian}
Let $\mathfrak{a}$ be $(0,2)$-tensor field on $D$ satisfying Assumption \ref{assumption-3}. Assume that  $\mathfrak{a}$ is Codazzian on $D$ and its eigenvalues $\lambda_i$  are pointwise distinct. Then
     \begin{equation}
    \label{eq:Psi(lambda_ik)}
    \Gamma_{ii}^k=\frac{\lambda_{ik}}{\lambda_{i} - \lambda_{k}},  \text{ for } i\neq k.
    \end{equation}
    and 
    \begin{equation}\label{eq:Psi(p)}
     \sum_{\substack{i<j \\ k\neq i,j}}^n \Gamma_{ij}^k \Gamma_{ji}^k = 0,
     \end{equation}
    
    \end{lema}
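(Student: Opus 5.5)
The plan is to extract both identities from Lemma \ref{lem:a_ijk}, using the Codazzi symmetry of $a_{ijk}$ in all three indices.

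\emph{The formula for $\Gamma_{ii}^k$.} Fix $i\neq k$. By Lemma \ref{lem:a_ijk}(i), applied with the index pair $(k,i)$ and derivative index $i$, we get $a_{kii}=(\lambda_i-\lambda_k)\Gamma_{ii}^k$. The Codazzi condition gives $a_{kii}=a_{iik}$. By Lemma \ref{lem:a_ijk}(iii), $a_{iik}=\lambda_{ik}$. Hence $(\lambda_i-\lambda_k)\Gamma_{ii}^k=\lambda_{ik}$, and since the eigenvalues are pointwise distinct we can divide to obtain \eqref{eq:Psi(lambda_ik)}.

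\emph{The vanishing sum.} For pairwise distinct $i,j,k$, Lemma \ref{lem:a_ijk}(i) gives
\[
a_{kji}=(\lambda_j-\lambda_k)\Gamma_{ij}^k,\qquad a_{kij}=(\lambda_i-\lambda_k)\Gamma_{ji}^k .
\]
Codazzi symmetry makes $a_{ijk}$ totally symmetric, so $c_{ijk}:=a_{ijk}$ is a single quantity for the unordered triple. Therefore
\[
\Gamma_{ij}^k\Gamma_{ji}^k=\frac{c_{ijk}^2}{(\lambda_j-\lambda_k)(\lambda_i-\lambda_k)} .
\]
Following the decomposition in \eqref{eq:eq:Gamma_ijkGamma_jik-r}, the sum over $i<j$, $k\neq i,j$ regroups as a sum over triples $i<j<k$ of three terms, one for each choice of the "upper" index. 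This gives
\[
\sum_{\substack{i<j \\ k\neq i,j}}\Gamma_{ij}^k \Gamma_{ji}^k=\sum_{i<j<k}c_{ijk}^2\left[\frac{1}{(\lambda_i-\lambda_k)(\lambda_j-\lambda_k)}+\frac{1}{(\lambda_i-\lambda_j)(\lambda_k-\lambda_j)}+\frac{1}{(\lambda_j-\lambda_i)(\lambda_k-\lambda_i)}\right].
\]

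The bracket is the classical identity $\sum_{\mathrm{cyc}}1/\big((x-y)(x-z)\big)=0$. One verifies it by putting the three fractions over the common denominator $(\lambda_i-\lambda_j)(\lambda_j-\lambda_k)(\lambda_k-\lambda_i)$; the numerator becomes $(\lambda_j-\lambda_i)+(\lambda_k-\lambda_j)+(\lambda_i-\lambda_k)=0$. This yields \eqref{eq:Psi(p)}.

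The only real care needed is the sign bookkeeping when matching each $\Gamma$-product to the correct index ordering in Lemma \ref{lem:a_ijk}(i). The algebra itself is elementary.
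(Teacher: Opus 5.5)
Your proof is correct and follows essentially the same route as the paper. Like the paper, you obtain $\Gamma_{ii}^k$ from $a_{kii}=(\lambda_i-\lambda_k)\Gamma_{ii}^k$, the total symmetry of $a_{ijk}$, and $a_{iik}=\lambda_{ik}$; you then write $\Gamma_{ij}^k\Gamma_{ji}^k=a_{ijk}^2/\big((\lambda_i-\lambda_k)(\lambda_j-\lambda_k)\big)$, regroup the sum over triples $i<j<k$, and conclude with the same three-term partial-fraction identity (your bracket agrees term by term with the paper's after the sign flips), which you additionally verify over a common denominator where the paper simply states it.
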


\begin{proof}
Since $\mathfrak{a}$ is Codazzian, it holds that
\begin{equation}
\label{eq:a_ijk_symmetric}
a_{ijk}=a_{ikj}=a_{kij}=a_{kji}=a_{jki}=a_{jik}.
\end{equation}
By Lemma \ref{lem:a_ijk} $(i)$ and $(iii)$,  we have that for $i\neq k$,
\begin{eqnarray}
\label{eq:Gamma_ii^kGamma_jj^k}
\Gamma_{ii}^k  &=& \frac{a_{kii}}{\lambda_{i} - \lambda_{k}}=\frac{a_{iik}}{\lambda_{i} - \lambda_{k}} = \frac{\lambda_{ik}}{\lambda_{i} - \lambda_{k}}.
\end{eqnarray}

For fixed $i,j,k$ with $\lambda_i,\lambda_j,\lambda_k$ mutually distinct, from Lemma \ref{lem:a_ijk}-$(i)$ and (\ref{eq:a_ijk_symmetric}) it follows that
\begin{equation}
\Gamma_{ij}^k \Gamma_{ji}^k = \left(\frac{a_{kji}}{\lambda_{j} - \lambda_{k}}\right) \left(\frac{a_{kij}}{\lambda_{i} - \lambda_{k}} \right) = \frac{a_{ijk}^2}{(\lambda_{i} - \lambda_{k})(\lambda_{j} -\lambda_{k})}.
\end{equation}
Then, using that $\lambda_i$'s are distinct, we have 
\begin{eqnarray}
\label{eq:sum_Gamma_ij^kGamma_ji^k}
\sum_{\substack{i<j \\ k \neq i,j}}^n \Gamma_{ij}^k \Gamma_{ji}^k &=& \sum_{\substack{i<j \\ k \neq i,j}}^{n} \frac{a_{ijk}^2}{(\lambda_{i} - \lambda_{k})(\lambda_{j} - \lambda_{k})} \nonumber\\
&=& \sum_{i<j<k}^{n} \frac{a_{ijk}^2}{(\lambda_{i} - \lambda_{k})(\lambda_{j} - \lambda_{k})} + \sum_{i<k<j}^{n} \frac{a_{ijk}^2}{(\lambda_{i} - \lambda_{k})(\lambda_{j} - \lambda_{k})} + \sum_{k<i<j}^{n} \frac{a_{ijk}^2}{(\lambda_{i} - \lambda_{k})(\lambda_{j} - \lambda_{k})} \nonumber\\
&=& \sum_{i<j<k}^{n} \frac{a_{ijk}^2}{(\lambda_{i} - \lambda_{k})(\lambda_{j} - \lambda_{k})} + \sum_{i<j<k}^{n} \frac{a_{ikj}^2}{(\lambda_{i} - \lambda_{j})(\lambda_{k} - \lambda_{j})} + \sum_{i<j<k}^{n} \frac{a_{jki}^2}{(\lambda_{j} - \lambda_{i})(\lambda_{k} - \lambda_{i})} \nonumber\\
&=& \sum_{i<j<k}^{n} a_{ijk}^2 \Big( \frac{1}{(\lambda_{i} - \lambda_{k})(\lambda_{j} - \lambda_{k})} - \frac{1}{(\lambda_{i} - \lambda_{j})(\lambda_{j} - \lambda_{k})} + \frac{1}{(\lambda_{i} - \lambda_{j})(\lambda_{i} - \lambda_{k})} \Big) \nonumber\\
&=& 0.
\end{eqnarray}
The fourth equality in \eqref{eq:sum_Gamma_ij^kGamma_ji^k} followed from (\ref{eq:a_ijk_symmetric}) and the last equality used
\[\frac{1}{(\lambda_{i} - \lambda_{k})(\lambda_{j} - \lambda_{k})} - \frac{1}{(\lambda_{i} - \lambda_{j})(\lambda_{j} - \lambda_{k})} + \frac{1}{(\lambda_{i} - \lambda_{j})(\lambda_{i} - \lambda_{k})}=0.\] 
\end{proof}

\section{Expressions for $\Psi$  and $\langle X, \nabla \sigma_n\rangle$ }  \label{lambda-ij}

We first prove an expression for the derivative  of $n$ smooth distinct real functions on an open subset of $M$.
\begin{prop}\label{lem:lambda_ij-sigma} 
Let $D$ be an open subset of an $n$-dimensional Riemannian manifold $M^n$ with an orthonormal frame $\{e_i\}$ and its dual coframe  $\{\theta_i\}$.  Let $\lambda_1,\dots,\lambda_n$ be smooth pointwise distinct real-valued functions on $D$.  
Let $\sigma_k=\sigma_k(\lambda_1,\dots,\lambda_n)$ denote the $k$-th elementary symmetric function of $\lambda_1, \ldots, \lambda_n$, so that

\[
P(x):=\prod_{i=1}^n (x-\lambda_i)
= x^n - \sigma_1 x^{\,n-1} + \sigma_2 x^{\,n-2} - \cdots + (-1)^n \sigma_n.
\]
If $\sigma_k$ is constant for all $k\neq n$, 
then 
\[
\lambda_{ij} \;=\; (-1)^{\,n+1}\,\frac{(\sigma_n)_j}{P'(\lambda_i)}, 
\]
where
\[ 
d\lambda_i=\sum_{j=1}^n \lambda_{ij}\,\theta_j, \
d\sigma_n=\sum_{j=1}^n (\sigma_n)_j\,\theta_j
 \text{ and }
P'(\lambda_i)=\prod_{k\neq i}(\lambda_i-\lambda_k).
\]
\end{prop}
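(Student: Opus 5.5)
Differentiate the identity $P(x)=\prod_{i}(x-\lambda_i)$ with respect to the point of $D$ while keeping $x$ fixed. Since $\sigma_1,\dots,\sigma_{n-1}$ are constant, the only $x$-independent coefficient that varies is $(-1)^n\sigma_n$. For each fixed real $x$ this gives an identity of $1$-forms on $D$:
\[
d\Big(\prod_{k=1}^n (x-\lambda_k)\Big)=(-1)^n\,d\sigma_n .
\]
The left-hand side equals $-\sum_{k=1}^n \prod_{l\neq k}(x-\lambda_l)\,d\lambda_k$. This is a polynomial identity in $x$ with coefficients that are smooth $1$-forms. Equivalently, one can write each $d\sigma_m=\sum_k \frac{\partial\sigma_m}{\partial\lambda_k}\,d\lambda_k$ and note that $\sigma_m$ is the coefficient of the appropriate power of $x$ in $P$.

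Next evaluate this identity at $x=\lambda_i$, pointwise at each $p\in D$. Every product $\prod_{l\neq k}(x-\lambda_l)$ with $k\neq i$ contains the factor $(x-\lambda_i)$, so it vanishes there. The only surviving term is $k=i$, which gives
\[
-\prod_{l\neq i}(\lambda_i-\lambda_l)\,d\lambda_i=(-1)^n d\sigma_n .
\]
The evaluation is legitimate: the identity holds for every constant $x$ as an equality of $1$-forms at $p$, hence as polynomials in $x$ at $p$. We may therefore substitute the number $\lambda_i(p)$ without differentiating the substituted value.

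The coefficient in the surviving term is $P'(\lambda_i)=\prod_{l\neq i}(\lambda_i-\lambda_l)$, which is nonzero because the $\lambda_i$ are pointwise distinct. Dividing by it gives $d\lambda_i=(-1)^{n+1}\,d\sigma_n/P'(\lambda_i)$. Pairing both sides with $e_j$, or comparing coefficients of $\theta_j$, yields $\lambda_{ij}=(-1)^{n+1}(\sigma_n)_j/P'(\lambda_i)$.

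The only delicate point is the order of operations: differentiate in the manifold variable with $x$ frozen first, and only then substitute $x=\lambda_i(p)$. Once this is respected, the rest is direct. Alternatively, one could invert the Vandermonde-type Jacobian $\partial\sigma_m/\partial\lambda_k$ with Lagrange interpolation formulas, but the evaluation trick above avoids this.
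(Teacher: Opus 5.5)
Your proof is correct and is essentially the paper's argument. The paper differentiates $P(\lambda_i)=0$ along $D$ by the chain rule and uses $\partial P/\partial\sigma_k=(-1)^k x^{n-k}$ together with $d\sigma_k=0$ for $k\neq n$ to get $P'(\lambda_i)\,d\lambda_i+(-1)^n d\sigma_n=0$. You reach the same identity $-P'(\lambda_i)\,d\lambda_i=(-1)^n d\sigma_n$ by differentiating $P(x)$ at frozen $x$ in its product form and then substituting $x=\lambda_i$, and the final division by $P'(\lambda_i)\neq 0$ is the same.
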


\begin{proof}
 Using the expansion of $P(x)$ in the elementary symmetric functions, differentiate the identity $P(\lambda_i)=0$ and obtain that
\[
P'(\lambda_i)\,d\lambda_i
+\sum_{k=1}^n \frac{\partial P}{\partial \sigma_k}(\lambda_i)\,d\sigma_k
=0.
\]
Since $\dfrac{\partial P}{\partial \sigma_k}(x)=(-1)^k x^{\,n-k}$, we have
\[
P'(\lambda_i)\,d\lambda_i + \sum_{k=1}^n (-1)^k \lambda_i^{\,n-k}\,d\sigma_k = 0.
\]
Since $\sigma_k$ is constant for all $k\neq n$,  it follows that $d\sigma_k=0$ for these $k$, and  hence 
\[
P'(\lambda_i)\,d\lambda_i + (-1)^n d\sigma_n = 0.
\]
Thus
\[
d\lambda_i = -(-1)^n \frac{d\sigma_n}{P'(\lambda_i)} = (-1)^{\,n+1}\frac{d\sigma_n}{P'(\lambda_i)}.
\]
Writing $d\sigma_n=\sum_{j} (\sigma_n)_j\,\theta_j$ yields the component formula
\[
\lambda_{ij} = (-1)^{\,n+1}\frac{(\sigma_n)_j}{P'(\lambda_i)}.
\]
\end{proof}
 We establish the following polynomial equality for use in the proofs of  Lemmas \ref{lem-Lr-h} and \ref{x-n-1}.
\begin{prop}\label{lem:partial-fraction-identity}
Let $\lambda_1,\ldots,\lambda_m$ $(m\geq2)$ be distinct real numbers and define a polynomial of degree $m$ by
\[
P(x)=\prod_{k=1}^m (x-\lambda_k).
\] 
Then for any $i\in\{1,\ldots,m\}$,
\begin{equation}\label{eq:pf-identity}
\sum_{\substack{k=1 \\ k\neq i}}^m 
\frac{1}{P'(\lambda_k)(\lambda_i-\lambda_k)}
=
-\frac12\cdot\frac{P''(\lambda_i)}{P'(\lambda_i)^2}.
\end{equation}

\end{prop}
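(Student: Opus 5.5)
The plan is to use partial fractions for the rational function $1/P(x)$, together with a residue-type argument. Since the $\lambda_k$ are distinct, we have the standard decomposition
\[
\frac{1}{P(x)}=\sum_{k=1}^m \frac{1}{P'(\lambda_k)(x-\lambda_k)}.
\]
Fix $i$ and isolate the $k=i$ term, setting
\[
F(x):=\frac{1}{P(x)}-\frac{1}{P'(\lambda_i)(x-\lambda_i)}=\sum_{k\neq i}\frac{1}{P'(\lambda_k)(x-\lambda_k)}.
\]
The right-hand side is regular at $x=\lambda_i$, and its value there is exactly the left-hand side of \eqref{eq:pf-identity}. The proof therefore reduces to computing $\lim_{x\to\lambda_i}F(x)$ from the left-hand expression of $F$.

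For this limit I would write $P(x)=(x-\lambda_i)Q(x)$ with $Q(x)=\prod_{k\neq i}(x-\lambda_k)$. Differentiating $P=(x-\lambda_i)Q$ once and twice and evaluating at $\lambda_i$ gives
\[
Q(\lambda_i)=P'(\lambda_i),\qquad Q'(\lambda_i)=\tfrac12P''(\lambda_i).
\]
Then
\[
F(x)=\frac{1}{x-\lambda_i}\Big(\frac{1}{Q(x)}-\frac{1}{Q(\lambda_i)}\Big),
\]
and as $x\to\lambda_i$ this is the derivative of $1/Q$ at $\lambda_i$, namely
\[
-\frac{Q'(\lambda_i)}{Q(\lambda_i)^2}=-\frac12\frac{P''(\lambda_i)}{P'(\lambda_i)^2},
\]
which is the claimed identity.

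The only point needing care is the partial fraction decomposition itself. I would justify it by Lagrange interpolation: the polynomial $\sum_k \prod_{l\neq k}(x-\lambda_l)/P'(\lambda_k)$ has degree at most $m-1$ and equals $1$ at each of the $m$ points $\lambda_j$, so it is identically $1$; dividing by $P$ gives the decomposition. The identity $Q'(\lambda_i)=P''(\lambda_i)/2$ is also worth checking explicitly, since it follows from $P''=2Q'+(x-\lambda_i)Q''$. Neither step is a real obstacle; the whole argument is a short one-variable computation, and the hypothesis $m\ge2$ only ensures that the sum over $k\neq i$ is nonempty.
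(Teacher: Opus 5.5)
Your proposal is correct and takes essentially the same route as the paper. Both isolate the $k=i$ term in the partial fraction decomposition of $1/P$ and let $x\to\lambda_i$. The differences are cosmetic: you evaluate the limit as the derivative of $1/Q$ at $\lambda_i$, using $P=(x-\lambda_i)Q$, $Q(\lambda_i)=P'(\lambda_i)$ and $Q'(\lambda_i)=\tfrac12P''(\lambda_i)$, where the paper uses a second-order Taylor expansion of $P$; and you justify the decomposition by Lagrange interpolation, which the paper treats as standard.
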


\begin{proof}
Since the roots $\lambda_1,\ldots,\lambda_m$ are simple, the rational function
$P(x)^{-1}$ admits the partial fraction decomposition
\[
\frac{1}{P(x)}=
\sum_{k=1}^m \frac{1}{\prod_{i\neq k}^m (\lambda_k-\lambda_i)}\cdot\frac{1}{x-\lambda_k}
=
\sum_{k=1}^m \frac{1}{P'(\lambda_k)}\cdot\frac{1}{x-\lambda_k}.
\]
Separating the term $k=i$, we can write
\[
  \frac{1}{P(x)}
=
\frac{1}{P'(\lambda_i)}\,\frac{1}{x-\lambda_i}
+
\sum_{\substack{k=1 \\ k\neq i}}^m
\frac{1}{P'(\lambda_k)}\,\frac{1}{x-\lambda_k}.  
\]
Then
\begin{equation}\label{eq:frac}
\sum_{\substack{k=1 \\ k\neq i}}^m
\frac{1}{P'(\lambda_k)}\,\frac{1}{x-\lambda_k}
=
\frac{1}{P(x)}-\frac{1}{P'(\lambda_i)}\,\frac{1}{x-\lambda_i}.
\end{equation}
We let $x\to\lambda_i$ on both sides of \eqref{eq:frac}.
The left-hand side converges to
\[
\sum_{\substack{k=1 \\ k\neq i}}^m 
\frac{1}{P'(\lambda_k)(\lambda_i-\lambda_k)}.
\]
To evaluate the right-hand side, we expand $P$ at $\lambda_i$:
\[
P(x)
=
P'(\lambda_i)(x-\lambda_i)
+\frac12 P''(\lambda_i)(x-\lambda_i)^2
+O\!\left((x-\lambda_i)^3\right).
\]
Hence
\[
\frac{1}{P(x)}
=
\frac{1}{P'(\lambda_i)}\frac{1}{x-\lambda_i}
-
\frac12\frac{P''(\lambda_i)}{P'(\lambda_i)^2}
+O(x-\lambda_i).
\]
Subtracting the singular term $\frac{1}{P'(\lambda_i)}(x-\lambda_i)^{-1}$
and letting $x\to\lambda_i$, we obtain
\[
\lim_{x\to\lambda_i}
\left(
\frac{1}{P(x)}-\frac{1}{P'(\lambda_i)}\frac{1}{x-\lambda_i}
\right)
=
-\frac12\frac{P''(\lambda_i)}{P'(\lambda_i)^2}.
\]
Equating both limits yields \eqref{eq:pf-identity}.
\end{proof}

In the following lemma, we compute the function  $\displaystyle\langle X,\nabla \sigma_n\rangle$, and express it in terms of the derivatives of the polynomial $P(x)$
 up to second order. This lemma plays a key role in the proof of Theorem \ref{teo:M^n1-1} and hence Theorem \ref{T-Y}.

\begin{lema}\label{lem-Lr-h} 
Let $\mathfrak{a}$ be a smooth Codazzi symmetric $(0,2)$-tensor field on $D$ satisfying Assumption \ref{assumption-3}.   Suppose that  
its eigenvalues $\lambda_1,\dots,\lambda_n$ are pointwise distinct on $D$. Let \(X=\sum_{i=1}^n\nabla_{e_i}e_{i}\).  If   $\sigma_k=\sigma_k(\lambda_1,\dots,\lambda_n)$ is constant for all $k\neq n$, then
\begin{enumerate}[(i)]
\item  
\(
\Gamma_{ii}^k
=c_{ik}\,(\sigma_n)_k, 
\) for \(i\neq k\),   \(i, k\in \{1, \ldots, n\}\);
    \item $\displaystyle\langle X,\nabla \sigma_n\rangle
=\sum_{k=1}^n v_k\,(\sigma_n)_k^2$.
\end{enumerate}
Here \(\displaystyle c_{ik}=\frac{(-1)^{n+1}}{(\lambda_i-\lambda_k)P'(\lambda_i)}, \text{ for } i\neq k, \text{ and } v_k=(-1)^{n+1}\frac{P''(\lambda_k)}{2P'(\lambda_k)^2}. \)
\end{lema}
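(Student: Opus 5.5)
Part (i) follows by combining Lemma \ref{lem:a_is_Codazzian} with Proposition \ref{lem:lambda_ij-sigma}. Since $\mathfrak{a}$ is Codazzian with pointwise distinct eigenvalues, \eqref{eq:Psi(lambda_ik)} gives $\Gamma_{ii}^k=\lambda_{ik}/(\lambda_i-\lambda_k)$ for $i\neq k$. Since all $\sigma_k$ with $k\neq n$ are constant, Proposition \ref{lem:lambda_ij-sigma} gives $\lambda_{ik}=(-1)^{n+1}(\sigma_n)_k/P'(\lambda_i)$. Substituting yields $\Gamma_{ii}^k=c_{ik}(\sigma_n)_k$ with $c_{ik}$ exactly as stated.

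For (ii), I first write $X$ in components. As in \eqref{eq:X}, $X=\sum_k\big(\sum_i\Gamma_{ii}^k\big)e_k$, and $\Gamma_{kk}^k=0$, so
\[
\langle X,\nabla\sigma_n\rangle=\sum_{k=1}^n\Big(\sum_{i\neq k}\Gamma_{ii}^k\Big)(\sigma_n)_k=\sum_{k=1}^n\Big(\sum_{i\neq k}c_{ik}\Big)(\sigma_n)_k^2 .
\]
It then remains to show that $\sum_{i\neq k}c_{ik}=v_k$ for each fixed $k$, that is,
\[
\sum_{i\neq k}\frac{1}{(\lambda_i-\lambda_k)P'(\lambda_i)}=\frac{P''(\lambda_k)}{2P'(\lambda_k)^2}.
\]

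This follows from Proposition \ref{lem:partial-fraction-identity} with $m=n$ and the distinguished index $k$:
\[
\sum_{i\neq k}\frac{1}{P'(\lambda_i)(\lambda_k-\lambda_i)}=-\frac12\frac{P''(\lambda_k)}{P'(\lambda_k)^2}.
\]
Multiplying by $-1$ flips $\lambda_k-\lambda_i$ to $\lambda_i-\lambda_k$ and gives precisely the required identity. Multiplying by $(-1)^{n+1}$ then gives $\sum_{i\neq k}c_{ik}=v_k$.

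The only real obstacle is bookkeeping. One has to ensure the index roles in $c_{ik}$ match those in the partial fraction identity: the summation runs over $i$, and $P'$ is evaluated at the summation variable. The signs also have to be tracked carefully. Smoothness and well-definedness of all the quotients hold on $D$ because the eigenvalues are distinct there, so $P'(\lambda_i)\neq0$.
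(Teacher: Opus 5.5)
Your proposal is correct and follows essentially the same route as the paper: part (i) by substituting Proposition~\ref{lem:lambda_ij-sigma} into the Codazzi formula $\Gamma_{ii}^k=\lambda_{ik}/(\lambda_i-\lambda_k)$ of Lemma~\ref{lem:a_is_Codazzian}, and part (ii) by writing $X=\sum_k\big(\sum_{i\neq k}\Gamma_{ii}^k\big)e_k$ and evaluating $\sum_{i\neq k}c_{ik}$ via Proposition~\ref{lem:partial-fraction-identity} with the index roles swapped. Your sign bookkeeping agrees with the paper's.
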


\begin{proof} For simplicity of the notation, set \(h=\sigma_n\) and write
\[
\nabla h=\sum_{i=1}^n h_i\,e_i,
\qquad\text{where } h_i=(\sigma_n)_i.
\]
From Proposition \ref{lem:lambda_ij-sigma}, we have
\[
d\lambda_i=\sum_{j=1}^n \lambda_{ij}\,\theta_j,
\qquad 
\lambda_{ij}=(-1)^{\,n+1}\frac{h_j}{P'(\lambda_i)}.
\]
Thus, by Lemma \ref{lem:a_is_Codazzian}, for \(i\neq k\), \ $i, k\in \{1, \ldots, n\}$,
\[
\Gamma_{ii}^k=\frac{\lambda_{ik}}{\lambda_i-\lambda_k}
=\frac{(-1)^{\,n+1}h_k}{(\lambda_i-\lambda_k)P'(\lambda_i)}
=c_{ik}\,h_k,
\]
where \(\displaystyle c_{ik}=\frac{(-1)^{n+1}}{(\lambda_i-\lambda_k)P'(\lambda_i)}, \text{ for } i\neq k.\)

Since \(X=\sum_{i,j}\Gamma_{ii}^j e_j\) and \(\nabla h=\sum_k h_k e_k\),
\[
\langle X,\nabla h\rangle
=\sum_{k=1}^n\big(\sum_{\substack{i=1\\i\neq k}}^n\Gamma_{ii}^k\big)h_k
=\sum_{k=1}^n\big(\sum_{\substack{i=1\\i\neq k}}^n c_{ik}\big)h_k^2
=\sum_{k=1}^n v_k\,h_k^2.
\]
By Proposition  \ref{lem:partial-fraction-identity}, \[v_k=\sum_{\substack{i=1\\i\neq k}}^n c_{ik}=\sum_{\substack{i=1\\i\neq k}}^n\frac{(-1)^{n+1}}{(\lambda_i-\lambda_k)P'(\lambda_i)}=\sum_{\substack{i=1\\i\neq k}}^n\frac{(-1)^{n}}{(\lambda_k-\lambda_i)P'(\lambda_i)}=(-1)^{n+1}\frac12\frac{P''(\lambda_k)}{P'(\lambda_k)^2}.\]
\end{proof}
In Lemma \ref{lem-Lr-h-L}, we  compute the function  $\Psi$, defined in Theorem \ref{thm:divX-intro2} by
\[
\Psi = \sum_{\substack{i<j \\ k\neq i,j}}^n (\Gamma_{ii}^k \Gamma_{jj}^k - \Gamma_{ij}^k \Gamma_{ji}^k),
\]
and apply an  algebraic inequality due to Tang, Wei, and Yan \cite{Tang-Wei-Yan20} to conclude that $\Psi\leq 0$. 
\begin{lema}\label{lem-Lr-h-L} Let $\mathfrak{a}$ be a smooth Codazzi symmetric $(0,2)$-tensor field on $D$ satisfying Assumption \ref{assumption-3}.   Suppose that
its eigenvalues $\lambda_1,\dots,\lambda_n$ are pointwise distinct on $D$.  Let $\Psi$ be as in Theorem \ref{thm:divX-intro2}. If   $\sigma_k=\sigma_k(\lambda_1,\dots,\lambda_n)$ is constant for all $k\neq n$, then
\begin{enumerate}[(i)]
\item \[\Psi
=\frac{1}{2}\sum_{k=1}^n L(k)\,(\sigma_n)_k^2,\]
where 
\(\displaystyle
L(k)=\sum_{\substack{i\neq j\\i,j\neq k}}^n c_{ik}c_{jk}, \) with
\(
c_{ik}
\) defined as in Lemma \ref{lem-Lr-h}.
\item $L(k)<0$,  $k=1,\ldots, n$, and $\Psi\leq 0$.

    \end{enumerate}

\end{lema}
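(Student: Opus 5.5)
The plan is to substitute the formulas of Lemma~\ref{lem:a_is_Codazzian} and Lemma~\ref{lem-Lr-h} into the definition of $\Psi$, and then reduce the sign of $L(k)$ to an algebraic inequality.

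\emph{Part (i).} Write
\[
\Psi=\sum_{\substack{i<j\\ k\neq i,j}}\Gamma_{ii}^k\Gamma_{jj}^k-\sum_{\substack{i<j\\ k\neq i,j}}\Gamma_{ij}^k\Gamma_{ji}^k .
\]
Since $\mathfrak a$ is Codazzian with pointwise distinct eigenvalues, the second sum vanishes by \eqref{eq:Psi(p)} in Lemma~\ref{lem:a_is_Codazzian}. For the first sum, Lemma~\ref{lem-Lr-h}(i) gives $\Gamma_{ii}^k\Gamma_{jj}^k=c_{ik}c_{jk}(\sigma_n)_k^2$ whenever $i,j\neq k$. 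Next, interchange the order of summation, putting $k$ outermost. The summand $c_{ik}c_{jk}$ is symmetric in $(i,j)$, so the sum over $i<j$ with $i,j\neq k$ equals half the sum over ordered pairs $i\neq j$ with $i,j\neq k$. This gives
\[
\Psi=\tfrac12\sum_k L(k)(\sigma_n)_k^2 .
\]

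\emph{Part (ii).} The last assertion is immediate once $L(k)<0$ is known, since every $(\sigma_n)_k^2\ge 0$. So the whole content is the strict inequality $L(k)<0$. Up to the common sign $(-1)^{n+1}$, which disappears in the products $c_{ik}c_{jk}$, each $c_{ik}$ depends only on the distinct reals $\lambda_1,\dots,\lambda_n$. Hence $L(k)$ is a purely algebraic function of these numbers. I would first rewrite it as
\[
L(k)=\Big(\sum_{i\ne k}c_{ik}\Big)^2-\sum_{i\ne k}c_{ik}^2=v_k^2-\sum_{i\neq k}c_{ik}^2,
\]
using the formula $v_k=\sum_{i\ne k}c_{ik}$ from the proof of Lemma~\ref{lem-Lr-h}. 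In this form the required statement is exactly the algebraic inequality of Tang, Wei and Yan \cite{Tang-Wei-Yan20} that was announced before the lemma. I would invoke that inequality directly, after checking that their quantity coincides with $\sum_{i\ne j,\ i,j\ne k}\bigl((\lambda_i-\lambda_k)P'(\lambda_i)(\lambda_j-\lambda_k)P'(\lambda_j)\bigr)^{-1}$.

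\emph{Fallback: a self-contained proof.} The main obstacle is proving $L(k)<0$ without the citation. I would try the following reduction. Set $Q(x)=P(x)/(x-\lambda_k)$, so that $P'(\lambda_i)=(\lambda_i-\lambda_k)Q'(\lambda_i)$ for $i\ne k$, and therefore
\[
c_{ik}=\pm\frac{1}{(\lambda_i-\lambda_k)^2Q'(\lambda_i)} .
\]
Then change variables to $\mu_i=1/(\lambda_i-\lambda_k)$, $i\neq k$. These $n-1$ numbers are distinct, and
\[
\lambda_i-\lambda_j=\frac{\mu_j-\mu_i}{\mu_i\mu_j}.
\]
With $R(y)=\prod_{i\ne k}(y-\mu_i)$ of degree $m=n-1$, a direct computation should give $c_{ik}\propto \Pi\,\mu_i^{m}/R'(\mu_i)$, where $\Pi=\prod_{i\neq k}\mu_i$. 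The partial-fraction identities $\sum_i \mu_i^{s}/R'(\mu_i)=h_{s-m+1}(\mu)$ (complete symmetric functions) then evaluate the linear sums. The remaining difficulty is the quadratic term $\sum_i \mu_i^{2m}/R'(\mu_i)^2$. I expect this to require a genuinely new argument, for example a Lagrange-interpolation or Cauchy--Schwarz type estimate. This step is where I anticipate the real work, and if it resists, I would rely on the cited Tang--Wei--Yan inequality.
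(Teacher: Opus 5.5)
Your proposal is correct and follows the paper's proof. Part (i) is derived exactly as you describe, from \eqref{eq:Psi(p)} and Lemma~\ref{lem-Lr-h}(i) with symmetrization in $(i,j)$. Part (ii) is obtained by citing the Tang--Wei--Yan inequality (Lemma~\ref{lem-TWY}); their $c_{ir}$ equals minus the paper's $c_{ik}$, so $L(k)$ is unchanged, as you anticipated.

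Your fallback substitution $\mu_i=1/(\lambda_i-\lambda_k)$ is also the one used in the appendix. There the quadratic term is never evaluated. Instead, after normalizing so that $\sum_{i\neq k}\mu_i>0$ and taking $\mu_{i_0}$ maximal, concavity of $\ln\bigl[(1+\sum_l x_l)\prod_l(1-x_l)\bigr]$ gives $\bigl(\sum_{i\neq k}c_{ik}\bigr)^2<c_{i_0k}^2<\sum_{i\neq k}c_{ik}^2$.
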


\begin{proof} Set \(h=\sigma_n\). Since $a$ is Codazzian, using  Lemma \ref{lem:a_is_Codazzian} and Lemma \ref{lem-Lr-h}-$(i)$, we have
\[\Psi=\sum_{\substack{i<j\\k\neq i,j}}^n\Gamma_{ii}^k\Gamma_{jj}^k
=\sum_{\substack{i<j\\k\neq i,j}}^nc_{ik}c_{jk}\,h_k^2
=\frac{1}{2}\sum_{k=1}^n
\big(\sum_{\substack{i\neq j\\i,j\neq k}}^nc_{ik}c_{jk}\big)h_k^2
=\frac{1}{2}\sum_{k=1}^n L(k)\,h_k^2.
\]
 An algebraic inequality due to  Tang, Wei, and Yan \cite{Tang-Wei-Yan20} (see the appendix, Lemma \ref{lem-TWY}) implies  that the functions $L(k)<0$, for all $k=1, \ldots, n$. 
Consequently, $\Psi\leq 0$. 
\end{proof}

\section{Proof of Theorem \ref{T-Y}}\label{sec:5}

In this section and Section \ref{sec:6}, we assume that $M^n$ is connected and oriented. Otherwise, the arguments can be carried out on each connected component or on the oriented double cover.

A special case of Theorem \ref{T-Y} is the following result proved by Tang,  Wei and Yan \cite{Tang-Wei-Yan20}.  
\begin{theorem}[Tang--Wei--Yan \cite{Tang-Wei-Yan20}]\label{T-W-Y}
Let $M^n$ $(n>3)$ be a closed Riemannian manifold.  
If $\mathfrak{a}$ is a smooth symmetric tensor field of type $(0,2)$ on $M$ and $A$ is its associated
$(1,1)$-tensor field. Assume  that:
\begin{enumerate}
    \item $S_{M} \geq 0$;
    \item $\mathfrak{a}$ is Codazzian;
    \item $A$ has $n$ distinct eigenvalues everywhere;
    \item $\operatorname{tr}(A^k)$ is constant for $k=1,\dots,n-1$.
    \end{enumerate}
Then  $\operatorname{tr}(A^{n})$ is constant and thus all eigenvalues of $A$ are constant, and $S_M\equiv 0$.  

\end{theorem}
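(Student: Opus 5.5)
Since $A$ has $n$ distinct eigenvalues everywhere on $M$, the eigenvalues $\lambda_1<\dots<\lambda_n$ are smooth functions and the eigenspaces are smooth line fields. Passing to a finite cover if necessary (or working with the line fields, which is enough since $X$ and $\Psi$ are unchanged when any $e_i$ is replaced by $-e_i$), we get a global orthonormal eigenframe $\{e_1,\dots,e_n\}$ on $M$. Indeed, $X=\sum_i\nabla_{e_i}e_i$ is invariant under $e_i\mapsto -e_i$, and every term of $\Psi$ is invariant as well. So $X$ and $\Psi$ are globally defined smooth objects on $M$ itself, and Assumption \ref{assumption-3} holds with $D=M$. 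By Newton's identities, constancy of $\operatorname{tr}(A^k)$ for $k=1,\dots,n-1$ is equivalent to constancy of $\sigma_1,\dots,\sigma_{n-1}$. Hence Proposition \ref{lem:lambda_ij-sigma} and Lemmas \ref{lem-Lr-h}, \ref{lem-Lr-h-L} apply with $h=\sigma_n$.

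\textbf{Step 1: show $\nabla h\equiv 0$.} By Corollary \ref{cor:X,div(X)} with $\partial M=\emptyset$,
\[
0=\int_M \dv X=\int_M \tfrac12 S_M-\int_M\Psi .
\]
By Lemma \ref{lem-Lr-h-L}, $\Psi=\frac12\sum_k L(k)h_k^2\le 0$ with $L(k)<0$. Combined with $S_M\ge0$, both integrands $\frac12 S_M$ and $-\Psi$ are nonnegative and integrate to zero, so $S_M\equiv0$ and $\Psi\equiv0$. Since $L(k)<0$ pointwise, $\Psi\equiv 0$ forces $h_k\equiv0$ for all $k$. Thus $\sigma_n$ is constant, hence so is $\operatorname{tr}(A^n)$. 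All $\sigma_k$ are then constant, so the characteristic polynomial is fixed and the eigenvalues are constant, by continuity and connectedness of $M$.

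I expect Step 1 to be essentially immediate from the established lemmas. The main obstacle should be the globalization: justifying that $\Psi$ and $X$ are well defined on all of $M$ even without a global frame. I would handle it via the sign-invariance observation above. Alternatively, one can use the double-cover and orientation reduction stated at the start of this section, or pull back to the finite cover on which each eigenline bundle becomes trivial. On such a cover the metric and $\mathfrak{a}$ pull back, remain Codazzi with $S\ge0$, and the integral identity holds there. Also worth noting: Lemma \ref{lem-Lr-h} (ii) is not needed for this special case; only the vanishing of the boundary term and the sign of $\Psi$ are used.
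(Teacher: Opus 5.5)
Your proposal is correct and follows the paper's proof. Like the paper, you make $X$ and $\Psi$ global by noting they are invariant under $e_i\mapsto -e_i$, and you replace constancy of the traces by constancy of $\sigma_1,\dots,\sigma_{n-1}$. You then combine $\int_M\dv X=0$, $S_M\ge0$ and $\Psi=\frac12\sum_k L(k)(\sigma_n)_k^2\le 0$ with $L(k)<0$ to get $S_M\equiv0$ and $\nabla\sigma_n\equiv0$. One wording point: Assumption~\ref{assumption-3} need not hold literally with $D=M$, since there may be no global frame. This is harmless, because the lemmas are pointwise and only need to be applied on local frame domains.
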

We give a proof of Theorem \ref{T-W-Y}.
\begin{proof} Without loss of generality, we label the $n$ eigenvalues of $A$ by $\lambda_1<\lambda_2<\ldots<\lambda_n$ on $M$. Then the eigenvalue functions $\lambda_i$  are smooth on $M$, and locally one can choose a smooth orthonormal frame of the corresponding eigenvectors $\{e_i\}$  on $M$. 

 Notice  that $X=\sum_{i=1}^n\nabla_{e_i}e_{i}$ and  $\Psi$ are globally well-defined on $M$. In fact, for any two local orthonormal frames $(U,\{e_i\})$ and $(\tilde{U}, \{\tilde{e_i}\})$ with $U\cap\tilde{U}\neq \emptyset$, we have $e_i=\pm\tilde{e_i}$. Then $X=\sum_{i=1}^n\nabla_{e_i}e_i=\sum_{i=1}^n\nabla_{\tilde{e_i}}\tilde{e_i}$ on $U\cap\tilde{U}$.   

Observe that for any $l\in\{1,\ldots, n\}$, the values of  $tr(A^k)$, $k=1,\ldots, l$ and the values of $\sigma_k(A)$, $k=1,\dots,l$ determine each other. Hence Assumption $(4)$ is equivalent to the constancy of $\sigma_k(A)$ for $k = 1, \ldots, n-1$. 

By Lemma \ref{lem-Lr-h-L},  $\Psi\leq 0$ on $M$. By Theorem \ref{thm:divX-intro2}, $\dv(X)=\frac12S_M-\Psi\geq0$ on $M$. 

On the other hand, since $M$ is closed, we have
\[\int_M\dv(X)=0.\]
It follows that $\dv(X)\equiv 0$, and consequently $S_M\equiv 0$ and $\Psi\equiv 0$ on $M$. This, together with Lemma \ref{lem-Lr-h-L}, implies that $\nabla \sigma_n\equiv 0$ and hence $\sigma_n$ is constant. Therefore $\operatorname{tr}(A^n)$ is constant and thus all eigenvalues of $A$ are constant.
\end{proof}

The following lemma is a corollary of a result of Almeida–Brito \cite{Almeida-Brito90} and will be used in the proof of Theorem \ref{teo:M^n1-1}.

 \begin{lema}
\label{lema:boundary}
Let $M$ be a closed Riemannian manifold. Suppose $u:M\to\mathbb{R}$ is a smooth function and let $m=\min_M u$.
If $a_k>m$ are regular values of $u$ with $a_k\to m$ and
\[
D_k=u^{-1}([m,a_k]),
\]
then
\begin{equation}
\label{eq:boundary}
\lim_{k\to \infty}\int_{\partial D_k}|\nabla u|\,dS=0.
\end{equation}
\end{lema}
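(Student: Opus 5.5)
Since $D_k=u^{-1}([m,a_k])$ with $a_k$ a regular value, $\partial D_k=u^{-1}(a_k)$ is a smooth closed hypersurface, and the outward unit normal of $D_k$ along it is $\nu=\nabla u/|\nabla u|$ (on $\partial D_k$ we have $u=a_k$, and $u$ increases as one leaves $D_k$). Hence
\[
\int_{\partial D_k}|\nabla u|\,dS=\int_{\partial D_k}\langle \nabla u,\nu\rangle\,dS=\int_{D_k}\Delta u\,dV
\]
by the divergence theorem on the compact domain $D_k$ with smooth boundary. This turns the boundary flux into a volume integral of the smooth, hence bounded, function $\Delta u$.

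It then suffices to show $\int_{D_k}\Delta u\to 0$. The sets $D_k$ form (after passing to a monotone subsequence $a_k\downarrow m$, which is harmless since the full sequence has the same limit once we show every subsequence does, or simply by monotonicity of the sets in $a$) a decreasing family whose intersection is $K=u^{-1}(m)$, the minimum set. Since $|\Delta u|\le C$ on $M$ and $\operatorname{vol}(D_k)<\infty$, dominated convergence gives
\[
\int_{D_k}\Delta u\,dV\longrightarrow \int_{K}\Delta u\,dV .
\]
So the claim reduces to $\int_K \Delta u=0$. On $K$, every point is an interior minimum, so $\nabla u=0$ there; if $K$ has measure zero we are done. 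Otherwise, on the set of density points of $K$ (full measure in $K$ by Lebesgue's density theorem), a standard fact for Sobolev/smooth functions is that $\nabla u=0$ a.e. on a level set and then the second derivatives of $u$ vanish a.e. on $\{ \nabla u=0\}$ as well (apply the a.e. vanishing of the gradient of a $W^{1,1}$ function on its zero set to each component $\partial_i u$ in coordinates). Hence $\Delta u=0$ a.e. on $K$, and $\int_K\Delta u=0$.

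The main obstacle is this last step, justifying $\Delta u=0$ a.e. on the (possibly fat) minimum set; I would use the Stampacchia-type lemma ($\nabla f=0$ a.e. on $\{f=c\}$ for $f\in W^{1,1}_{\mathrm{loc}}$) applied twice in local coordinates, first to $u$ and then to the components of $\nabla u$ on the set $\{\nabla u=0\}\supset K$. An alternative, avoiding measure theory, is to observe that $\Delta u\ge 0$ is not available pointwise on $K$ in general, so the a.e. argument is genuinely needed; the citation to Almeida--Brito presumably packages exactly this.
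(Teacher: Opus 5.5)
Your argument is correct and is essentially the paper's: the divergence theorem turns the flux into $\int_{D_k}\Delta u$, and this tends to zero because $\Delta u=0$ a.e.\ on $u^{-1}(m)$. The paper gets this from the Almeida--Brito lemma, whose proof shows the set $\{u=m,\ \Delta u\neq 0\}$ lies in finitely many regular hypersurfaces $\{\partial_1 u=0\}$; that is exactly your Stampacchia step for the smooth functions $\partial_i u$, and your dominated-convergence limit replaces their covering-by-contradiction argument.

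One correction to your closing aside: at a minimum point the Hessian is positive semidefinite, so $\Delta u\ge 0$ does hold on $u^{-1}(m)$. It just doesn't help, since you need $\int_{u^{-1}(m)}\Delta u=0$ rather than a sign, so the a.e.\ argument is still required.
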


\begin{proof}
For each regular value $a_k$, the boundary $\partial D_k=u^{-1}(a_k)$ is a smooth hypersurface whose outward unit normal vector is
\[
\nu=\frac{\nabla u}{|\nabla u|}.
\]
By the divergence theorem,
\[
\int_{\partial D_k}|\nabla u|\,dS
= \int_{\partial D_k}\langle\nabla u,\nu\rangle\,dS
= \int_{D_k}\mathrm{div}(\nabla u)\,d\mathrm{vol}
= \int_{D_k}\Delta u\,d\mathrm{vol}.
\]
Taking absolute values gives
\[
\int_{\partial D_k}|\nabla u|
\le \int_{D_k}|\Delta u|\,d\mathrm{vol}.
\]
Since $a_k-m\to0$, 
by Lemma~\ref{lema:advanced} in the Appendix,
\[
\lim_{k\to\infty}\int_{D_k}|\Delta u|\,d\mathrm{vol}=0.
\]
Hence  \eqref{eq:boundary} follows.
\end{proof}

To prove Theorem \ref{T-Y}, we establish the following result.
\begin{thm}
\label{teo:M^n1-1}
Let $M^n,$ $(n\ge 3)$ be a closed $n$-dimensional Riemannian manifold. 
Suppose that $\mathfrak{a}$ is a smooth  Codazzi symmetric $(0,2)$-tensor field on $M^n$ and $A$ its associated $(1,1)$-tensor field.
Assume that  $\sigma_k=\sigma_k(A)$, $k=1,\dots,n-1$, are constant and $\sigma_n=\sigma_n(A)$ is not constant on $M$. Then:
\begin{enumerate}[(i)]
    \item The image of $(-1)^n\sigma_n$ is a compact interval $[a,b]$ with  $a<b$.
    \item Let $D_0=\{p\in M\mid (-1)^n\sigma_n(p)\in (a,b)\} $.   For each $p\in D_0$, $A$ has $n$ distinct eigenvalues   \[\lambda_1(p)<\lambda_2(p)<\cdots<\lambda_n(p). \] 
    Moreover, the eigenvalues $\lambda_i $  are smooth on $D_0$ and locally there exist smooth orthonormal eigenvectors  $e_i$ associated with $\lambda_i$, $i=1, \ldots, n$.  
   \item Let $e_i, i=1,\ldots, n$, be as in (ii). Then the vector field  \[X=\sum_{i=1}^n\nabla_{e_i}e_{i}\] is well-defined on $D_0$.  
    \item There exist two sequences $\{a_k\} \text{ and } \{ b_k\}$ with $[a_k, b_k]\subset (a,b)$,  $\displaystyle\lim_{k\to +\infty}a_k=a$, and $\displaystyle\lim_{k\to +\infty}b_k= b$, such that 
\[
\limsup_{k\to +\infty} \int_{D_k} \mathrm{div}(X)\le 0, \]
where $D_k:=\{p\in M; (-1)^n\sigma_n(p)\in (a_k,b_k) \}$ and $X$ is defined in (iii).
\end{enumerate}

\end{thm}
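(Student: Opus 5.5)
Set $f:=(-1)^n\sigma_n$. Since $P(x)=x^n-\sigma_1x^{n-1}+\cdots+(-1)^{n-1}\sigma_{n-1}x+(-1)^n\sigma_n$ and $\sigma_1,\dots,\sigma_{n-1}$ are constant, we have $P(x)=Q(x)+f$, where $Q(x)=x^n-\sigma_1x^{n-1}+\cdots+(-1)^{n-1}\sigma_{n-1}x$ is a fixed real polynomial. At every point all roots of $Q+f(p)$ are real, since $A$ is symmetric.

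For (i): $M$ is closed and connected, so $f(M)$ is a compact interval $[a,b]$, and $a<b$ because $\sigma_n$ is not constant.

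For (ii): if $c\in(a,b)$ and $Q+c$ had a multiple root $x_0$, then $Q'(x_0)=0$ and $x_0$ is a real local extremum or an inflection point of $Q$. Perturbing $c$ slightly to one side inside $(a,b)$ would turn this multiple root into non-real roots in the even-multiplicity case. If the multiplicity is odd ($\ge3$), a small perturbation still produces non-real roots. Either way we contradict the fact that all roots are real for every value of $f$ in $[a,b]$. More precisely: for all $c$ in a neighbourhood of $c_0$ the polynomial $Q+c$ has $n$ real roots, so $c_0$ is interior to the set of $c$ with all roots real. Near a root $x_0$ of multiplicity $m\ge2$, the local equation $(x-x_0)^m g(x)=-(c-c_0)$ with $g(x_0)\neq0$ has only $m$ nearby solutions, and for $m\ge2$ at least one sign of $c-c_0$ gives non-real ones. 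Hence the eigenvalues are distinct on $D_0$. Smoothness of $\lambda_i$ follows from the implicit function theorem, since $P'(\lambda_i)\ne0$. Smooth local unit eigenvectors exist because the eigenspaces are one-dimensional, so the projections $\prod_{j\ne i}(A-\lambda_j)/(\lambda_i-\lambda_j)$ are smooth.

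For (iii): I will argue as in the proof of Theorem \ref{T-W-Y}. Two local choices of frame differ only by $e_i=\pm\tilde e_i$, and $\nabla_{e_i}e_i$ is invariant under this sign change, so $X$ is globally defined on $D_0$.

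For (iv), which is the main step, I will choose regular values $a_k\downarrow a$ and $b_k\uparrow b$ of $f$ (Sard) and apply the divergence theorem on the closure of $D_k$. Its boundary is $f^{-1}(a_k)\cup f^{-1}(b_k)$, with outward normals $-\nabla f/|\nabla f|$ and $\nabla f/|\nabla f|$ respectively, so
\[
\int_{D_k}\dv X=\int_{f^{-1}(b_k)}\frac{\langle X,\nabla f\rangle}{|\nabla f|}-\int_{f^{-1}(a_k)}\frac{\langle X,\nabla f\rangle}{|\nabla f|}.
\]
By Lemma \ref{lem-Lr-h}, $\langle X,\nabla\sigma_n\rangle=\sum_k v_k(\sigma_n)_k^2$. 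I will bound this by $\max_k|v_k|\,|\nabla f|^2$, which gives boundary terms of size at most $\int|v|\,|\nabla f|$.

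The obstacle is that $v_k=(-1)^{n+1}P''(\lambda_k)/(2P'(\lambda_k)^2)$ blows up as $f\to a$ or $f\to b$, where eigenvalues collide. So the crude bound from Lemma \ref{lema:boundary} alone, which gives $\int_{\partial D_k}|\nabla f|\to0$, is not enough. I will therefore use the sign of $v_k$ rather than its size. As $f\to b$, say, two roots $\lambda_k,\lambda_{k+1}$ of $Q+f$ merge at a critical point $x_0$ of $Q$, while the remaining roots stay simple and have bounded $v_j$. For the merging roots, $P''(\lambda_k)\approx Q''(x_0)$, whose sign is fixed by whether $b$ is a local max/min value of $-Q$, and the sign bookkeeping with $(-1)^{n+1}$ and the outward normal should make these singular contributions $\le0$. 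The bounded contributions tend to $0$ by Lemma \ref{lema:boundary}. Verifying this sign carefully at both ends, and handling several simultaneous collisions, is the step I expect to be delicate. The outcome is $\limsup\int_{D_k}\dv X\le0$.
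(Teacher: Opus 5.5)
Your proposal is correct. For (i), (iii) and (iv) it follows the paper's argument: Sard regular values, the divergence theorem on $f^{-1}([a_k,b_k])$, the identity $\langle X,\nabla\sigma_n\rangle=\sum_k v_k(\sigma_n)_k^2$ of Lemma~\ref{lem-Lr-h}, one-sided bounds on $P''(\lambda_i)/P'(\lambda_i)^2$ near each endpoint (the paper's Claims~\ref{claim-5-2} and \ref{claim-5-3}), and Lemma~\ref{lema:boundary}.

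The sign check you postpone does go through. On $f^{-1}(b_k)$ the boundary integrand is $-\frac12\sum_i \frac{P''(\lambda_i)}{P'(\lambda_i)^2}\frac{(\sigma_n)_i^2}{|\nabla\sigma_n|}$. By your own local analysis from (ii), applied on one side only, any multiple root $x_0$ of $Q+b$ is exactly double with $Q''(x_0)>0$, since the roots near $x_0$ must stay real for $t<b$. So for $t$ near $b$ the colliding terms are $\le 0$, and the remaining terms stay bounded. At $a$ the integrand has the opposite sign and $Q''(x_0)<0$. Because $P''(\lambda_i)/P'(\lambda_i)^2$ depends only on $t=f(p)$ and each index is bounded separately, simultaneous collisions cause no trouble.

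The genuine difference is in (ii). The paper argues as follows:
\begin{itemize}
\item It first shows that the set of points with simple spectrum is nonempty, using Singley's open dense set of smoothness and $P'(\lambda_i)\,d\lambda_i+(-1)^n d\sigma_n=0$.
\item It uses such a point to see that $Q'$ has $n-1$ simple roots with alternating extrema.
\item It introduces $\underline{M}$ and $\overline{m}$ and reads off from the graph of $Q$, with an odd/even case analysis, that $[a,b]\subset[-\underline{M},-\overline{m}]$, that the roots are simple for $t\in(a,b)$, and which roots collide at the endpoints.
\end{itemize}
Your argument is purely local: every $c\in[a,b]$ is attained by $f$, so $Q+c$ is real-rooted, while a root of multiplicity $m\ge 2$ at an interior $c_0$ would produce non-real roots on one side of $c_0$. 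This bypasses Singley's theorem and the auxiliary nonemptiness claim. Its one-sided version at $c=a,b$ yields exactly the endpoint information needed in (iv) (multiplicity $\le 2$ and the sign of $Q''$) without parity cases. The paper's global picture gives an explicit description of which eigenvalues collide, through interlacing with the critical points $\xi_i$, but that is more than the proof actually requires.
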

\begin{proof}
$(i)$ Since  $\sigma_n$ is a smooth, non-constant function on the closed manifold $M^n$,  the image of $(-1)^{n}\sigma_n$ is a compact interval $[a,b]$ with $a<b$.

$(ii)$ Denote by $\lambda_1(p)\le\cdots\le\lambda_n(p)$ the eigenvalues of $A(p)$ for  $p\in M^n$. It is known that $\lambda_i$, $i=1,\ldots, n$, are continuous on $M$ and there exists an open dense subset $V\subset M $ on which all $\lambda_i$ are smooth (see Remark \ref{Re-S75}). 
Define the open subset of $M$ by
\[
D:=\big\{p\in M^n \mid \lambda_1(p)<\lambda_2(p)<\cdots<\lambda_n(p)\big\}.
\]
\begin{claim}\label{claim-5-1}
$D$ is nonempty.
\end{claim}

In fact, assume, for the sake of contradiction,  that $D=\emptyset$.

For $p\in M$, the characteristic polynomial of $A$  is
\begin{equation}
\label{eq:P(lambda)-sigma-1}
P(x)=\prod_{i=1}^n(x-\lambda_i)
=x^n-\sigma_1x^{n-1}+\sigma_2x^{n-2}-\cdots+(-1)^{n-1}\sigma_{n-1}x+(-1)^n\sigma_n,
\end{equation}
where $\sigma_1,\dots,\sigma_{n-1}$ are constant on $M$ and $\sigma_n$ is not constant on $M$.

 Differentiating  the equation $P(\lambda_i)=0$ on $V$, we obtain 
\begin{equation}\label{eq:fund-diff}
P'(\lambda_i)\,d\lambda_i + (-1)^n d\sigma_n = 0.
\end{equation}

For any  $p\in V$, since $D=\emptyset$,  there exists an index \(i\) such that the eigenvalue \(\lambda_i(p)\) has multiplicity at least $ 2$. For this index  $i$, we have
\(
P'(\lambda_i(p))=0.
\) Substituting in \eqref{eq:fund-diff},  we have \(d\sigma_n(p)=0\).

Since $V$ is an  open dense subset of \(M^n\), it follows that \(d\sigma_n\equiv 0\) on \(M\) and thus \(\sigma_n\) is constant on \(M\), which contradicts the assumption of the theorem. Therefore,  Claim \ref{claim-5-1} holds.

 We define the  polynomial $Q(x)$ of degree $n$ by
\[Q(x)=x^n-\sigma_1x^{n-1}+\sigma_2x^{n-2}-\cdots+(-1)^{n-1}\sigma_{n-1}x.\]
Then
\[P(x)=Q(x)+(-1)^{n}\sigma_n.\] 
 Consequently,
\[Q(0)=0, \  Q'(x)=P'(x)=\sum_{j=1}^n\prod_{i\neq j}^{n}(x-\lambda_i), \ \text{ and }  Q''(x)=P''(x).\]

For each $p\in M$, since the term $(-1)^{n}\sigma_n$ is independent of $x$,  the graph of $P(x)$ is merely a vertical translation of the graph of $Q(x)$ by  $(-1)^{n}\sigma_n$. Thus, knowing the graph of $P(x)$ for  $p\in D$  determines that of $Q(x)$, and hence the graphs of $P(x)$ for all $p\in M$.

Choose a point $p\in D\neq \emptyset$. Since $P$ has $n$ distinct real roots at $p$, $P$ has $n-1$ distinct critical points $\xi_1<\cdots<\xi_{n-1}$, which are  local extrema that alternate between maxima and minima, satisfying 
\begin{equation}\label{ine-xi}
 \lambda_1<\xi_1<\lambda_2<\ldots<\lambda_i<\xi_i<\lambda_{i+1}<\ldots<\lambda_{n-1}<\xi_{n-1}<\lambda_n.   
\end{equation}
Consequently, $Q(x)$ has the same $n-1$ distinct critical points $\xi_1<\cdots<\xi_{n-1}$, again with alternating local extrema. Hence  $\displaystyle Q'(x)=n\prod_{i=1}^{n-1}(x-\xi_i)$, $Q'(\xi_i)=0$, and  $Q''(\xi_i)\neq 0$ by observing that $\xi_i$ are distinct.

We define two numbers $\underline{M}$ and $\overline{m}$ which are  the smallest local maximum value of $Q$ and  the largest local minimum value of $Q$ respectively, that is,
\[
\underline{M} := \min\{Q(\xi_i): Q''(\xi_i)<0\}, \qquad
\overline{m} := \max\{Q(\xi_i): Q''(\xi_i)>0\}.
\]
 
Noting that the values of the $n$ real roots $\lambda_i$ of $P(x)$ depend on the value of $(-1)^n\sigma_n$, we may also regard $P(x)$ as a one-parameter family of polynomials of degree $n$, parametrized by $t$, 
\begin{equation}\label{eq:t}
    P(x)(t)=Q(x)+t, \  t\in [a,b].
\end{equation}

There is  a relation between  the properties of the roots of $P(x)$ and the quantities \(\underline{M}\)  and \(\overline{m} \), which can be described as follows:

(a) The  existence of $n$ real roots $\lambda_i$ of the polynomial $P$ requires that
\[t\in [-\underline{M},-\overline{m}],\]
that is, 
\begin{equation*}
  [a,b]\subset[-\underline{M},-\overline{m}]. 
\end{equation*}

(b) By the shape of graph of $P$,  for each $t\in [a,b]$, every real root $\lambda_i$ of $P$ has multiplicity  at most $2$.  

Moreover,  $P$ has $n$ distinct real roots if and only if \[t\neq  -\underline{M} \text{ and } t\neq -\overline{m}.\]
It follows that  for $t\in (a,b)\subset  (-\underline{M},-\overline{m})$, $P$ has $n$ distinct real roots $\lambda_i$, which satisfy \[\lambda_1<\xi_1<\lambda_2<\ldots<\lambda_i<\xi_i<\lambda_{i+1}<\ldots<\lambda_{n-1}<\xi_{n-1}<\lambda_n,\]
where $\xi_i$, $i=1,\ldots,n-1$ are defined in (\ref{ine-xi}).

This implies that \[D_0=\{p\in M\mid (-1)^n\sigma_n(p)\in (a,b)\}\subset D\]

(c) If the case $ a=-\underline{M}$  (or $b=-\overline{m}$, respectively) occurs, then at $t=a$ (or $t=b$, respectively)  there exists at least one index $i$ such that $\lambda_i=\lambda_{i+1}=\xi_i$, where $\xi_i$ satisfies
\[
P(\xi_i)=0,   P'(\xi_i)=0, \text{ and }  P''(\xi_i)<0 \ (\text{or } P''(\xi_i)>0, \text{ respectively}).
\]

By (b),  $\lambda_1 < \lambda_2 < \cdots < \lambda_n$ on $D_0$. This implies that the eigenvalue functions $\lambda_i$ are smooth on $D_0$, and one can choose a  local smooth orthonormal frame of the corresponding eigenvectors $\{e_i\}$ on $D_0$.

$(iii)$ By the same argument as in the proof of Theorem \ref{T-W-Y}, $X=\sum_{i=1}^n\nabla_{e_i}e_i$ is globally well-defined on $D_0$.

$(iv)$ Since $(-1)^n\sigma_n$ is smooth on $M$, by Sard's lemma, we may choose two sequences of regular values $a_k$ and $b_k$ of $(-1)^n\sigma_n$ such that $[a_k, b_k]\subset(a,b)$, $a_k\to a$ and $b_k\to b$.  By the Stokes theorem, we have
\begin{equation}\label{eq:stokes}
 \int_{D_{k}} \mathrm{div}(X)=\int_{\partial D_k}\langle X,\nu\rangle,   
\end{equation}
where $D_k=\{p\in M; (-1)^n\sigma_n(p)\in(a_k, b_k)\} $ and $\nu$ is the unit outward normal vector to $\partial D_k$.  

Let $\partial D_k^a=\{p\in M; (-1)^n\sigma_n(p)=a_k\}$ and  $\partial D_k^b=\{p\in M; (-1)^n\sigma_n(p)=b_k\}.$ Then 
\[\partial D_k=\partial D_k^a\cup \partial D_k^b.\]

Since  $\nu=\frac{(-1)^n\nabla \sigma_n}{|\nabla \sigma_n|}$ on $\partial D_k^b$, by Lemma  \ref{lem-Lr-h}, 
we have, on $\partial D_k^b$,
\begin{equation}\label{eq:ak}
  \begin{split}
\langle X,\nu\rangle
&=\frac{(-1)^n}{|\nabla \sigma_n|}\langle X,\nabla \sigma_n\rangle =\frac{(-1)^n}{|\nabla \sigma_n|}\sum_{i=1}^nv_i(\sigma_n)_i^2=-\frac12\sum_{i=1}^n\frac{(\sigma_n)_i^2}{|\nabla \sigma_n|}\frac{P''(\lambda_i)}{P'(\lambda_i)^2}.
\end{split}  
\end{equation}
Similarly, on $\partial D_k^a$,  $\nu=-\frac{(-1)^n\nabla h}{|\nabla h|}$ and
\begin{equation}\label{eq:bk}
 \begin{split}
\langle X,\nu\rangle
&=-\frac{(-1)^n}{|\nabla \sigma_n|}\langle X,\nabla \sigma_n\rangle =-\frac{(-1)^n}{|\nabla \sigma_n|}\sum_{i=1}^nv_i(\sigma_n)_i^2=\frac12\sum_{i=1}^n\frac{(\sigma_n)_i^2}{|\nabla \sigma_n|}\frac{P''(\lambda_i)}{(P'(\lambda_i))^2}.
\end{split}   
\end{equation}

We next prove the  two claims for the one-parameter family of polynomials $P(x)$ with parameter $t$, given by \eqref{eq:t}.

\begin{claim}\label{claim-5-2}
    $\tfrac{P''(\lambda_i)}{P'(\lambda_i)^2}$, $i=1,\ldots,n$,  are bounded from above on $(a, a+\delta]$  for some $\delta>0$.
\end{claim}

We prove the claim by considering two possible cases for $a$.

Case 1. $a>-\underline{M}$. In this case,  $\lambda_i(a)$, $i=1,\ldots,n$,  are distinct. Hence, as  $t\searrow a$,  $\lambda_i\rightarrow \lambda_i(a)$, and
\[
\frac{P''(\lambda_i)}{P'(\lambda_i)^2}\to  \frac{P''(\lambda_{i}(a))}{P'(\lambda_{i}(a))^2}.
\]
It implies that 
$\tfrac{P''(\lambda_i)}{P'(\lambda_i)^2}$, $i=1,\ldots,n$,  are bounded on $(a, a+\delta]$ for some $\delta>0$.

Case 2. $a=-\underline{M}$.

 When $n$ is odd, the roots $\lambda_i(a)$ satisfy that
\begin{equation}\label{ine:odd-1}
    \lambda_1(a)\leq \lambda_2(a)<\lambda_3(a)\leq \lambda_4(a)<\lambda_5(a)\leq\ldots<\lambda_{n-2}(a)\leq\lambda_{n-1}(a)<\lambda_n(a),
\end{equation}
where the equality in $ \lambda_i(a)\leq\lambda_{i+1}(a)$  in \eqref{ine:odd-1}  occurs only for those indices $i$ such that $Q(\xi_i)=\underline{M}$. Moreover, for such an index $i$, one has $\lambda_i(a)=\lambda_{i+1}(a)=\xi_i$.

\begin{figure}[htbp]\label{fig1}
  \centering
  \includegraphics[width=0.5\textwidth]{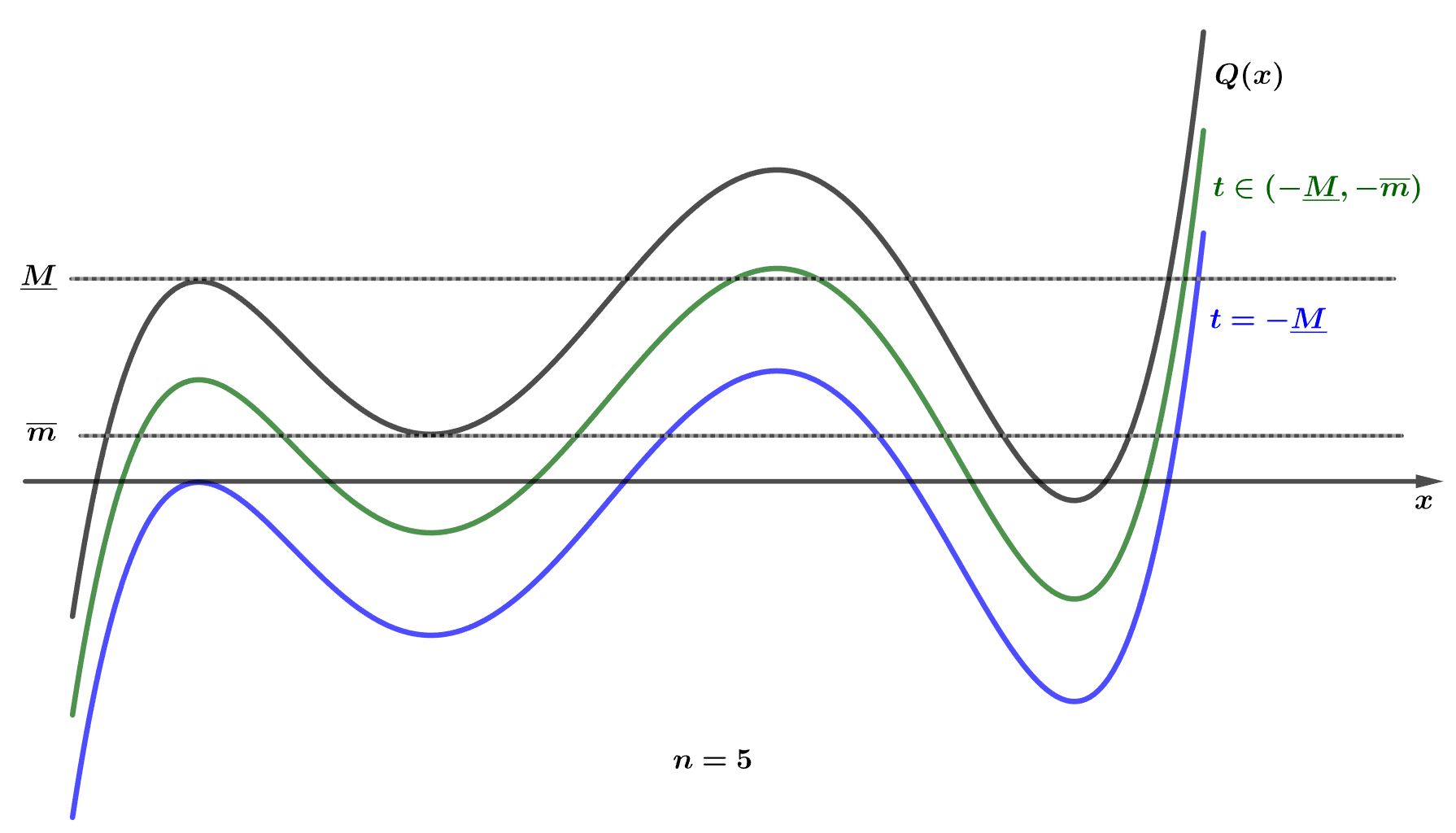}
  \caption{An example illustrating the case of equality in \eqref{ine:odd-1}}
  \label{fig:figure1}
\end{figure}

Now for the index $i$ such that $\lambda_i(a)=\lambda_{i+1}(a)=\xi_i$, we have $Q(\xi_i)=\underline{M}$, $Q'(\xi_i)=0$, and $Q''(\xi_i)<0$. Hence, as $t\searrow a$,  $\lambda_i\rightarrow \xi_i$ and $\lambda_{i
+1}\rightarrow \xi_i$. Consequently,
\begin{equation}\label{odd-1}
 \frac{P''(\lambda_i)}{P'(\lambda_i)^2}\to -\infty \ \text{ and } \ \frac{P''(\lambda_{i+1})}{P'(\lambda_{i+1})^2}\to -\infty.   
\end{equation}

For the index $i$ such that $Q(\xi_i)> \underline{M}$, as  $t\searrow a$,  $\lambda_i\rightarrow \lambda_i(a)$,  $\lambda_{i+1}\rightarrow \lambda_{i+1}(a)$. Hence,

\begin{equation}\label{odd-2}
   \frac{P''(\lambda_i)}{P'(\lambda_i)^2}\to  \frac{P''(\lambda_{i}(a))}{P'(\lambda_{i}(a))^2} \ \text{ and }\  \frac{P''(\lambda_{i+1})}{P'(\lambda_{i+1})^2}\to \frac{P''(\lambda_{i+1}(a))}{P'(\lambda_{i+1}(a))^2}. 
\end{equation}

By \eqref{ine:odd-1} and the alternating nature of the local extrema $\xi_i$, we observe that for each index $i\neq n$, $\lambda_i=\lambda_i(t)$, with $ t\in (a,b),$ coincides either with the left endpoint  of the interval $[\lambda_i, \lambda_{i+1}]$ such that $Q(\xi_i)\geq \underline{M}$, or with the right endpoint  of the interval $[\lambda_{i-1}, \lambda_{i}]$ such that $Q(\xi_{i-1})\geq \underline{M}$. Hence \eqref{odd-1} and \eqref{odd-2} imply that $\tfrac{P''(\lambda_i)}{P'(\lambda_i)^2}$, $i\neq n$,  are bounded from above on $(a, a+\delta]$  for some $\delta>0$.

On the other hand, $\lambda_n$ is the right endpoint of $[\lambda_{n-1},\lambda_n]$ with $Q(\xi_{n-1})\leq \overline{m}<\underline{M}$. Hence as $t\searrow a$, $\lambda_n\to \lambda_n(a)$, and
\begin{equation}\label{odd-3}
   \frac{P''(\lambda_n)}{P'(\lambda_n)^2}\to  \frac{P''(\lambda_{n}(a))}{P'(\lambda_{n}(a))^2}. 
\end{equation}
Hence  $\tfrac{P''(\lambda_n)}{P'(\lambda_n)^2}$ is bounded  on $(a, a+\delta]$  for some $\delta>0$.
 
When $n$ is even,  the following observations hold: 

If $a=-\underline{M}$, the roots $\lambda_i(a)=\lambda(-\underline{M})$ satisfy that
\begin{equation}\label{ine:even-1}
    \lambda_1(a)< \lambda_2(a)\leq\lambda_3(a)< \lambda_4(a)\leq\lambda_5(a)<\ldots<\lambda_{n-2}(a)\leq \lambda_{n-1}(a)<\lambda_n(a),
\end{equation}
where the equality in $ \lambda_i(a)\leq\lambda_{i+1}(a)$  in \eqref{ine:even-1}  occurs only for those indices $i$ such that $Q(\xi_i)=\underline{M}$. Moreover, for such an index $i$, one has $\lambda_i(a)=\lambda_{i+1}(a)=\xi_i$.

From \eqref{ine:even-1} and the alternating nature of the local extrema $\xi_i$,  it follows  that each $\lambda_i=\lambda_i(t)$, with $ t\in (a,b)$, $i\neq 1, n$, coincides either with the left endpoint  of the interval $[\lambda_i, \lambda_{i+1}]$ such that $Q(\xi_i)\geq \underline{M}$, or with the right endpoint  of the interval $[\lambda_{i-1}, \lambda_{i}]$ such that $Q(\xi_{i-1})\geq \underline{M}$.  

On the other hand, $\lambda_1$ is the left endpoint of $[\lambda_1,\lambda_2]$ with $Q(\xi_1)\leq \overline{m}$ and  $\lambda_n$ is the right endpoint of $[\lambda_{n-1},\lambda_n]$ with $Q(\xi_{n-1})\leq \overline{m}$.

\begin{figure}[htbp]\label{fig2}
  \centering
  \includegraphics[width=0.6\textwidth]{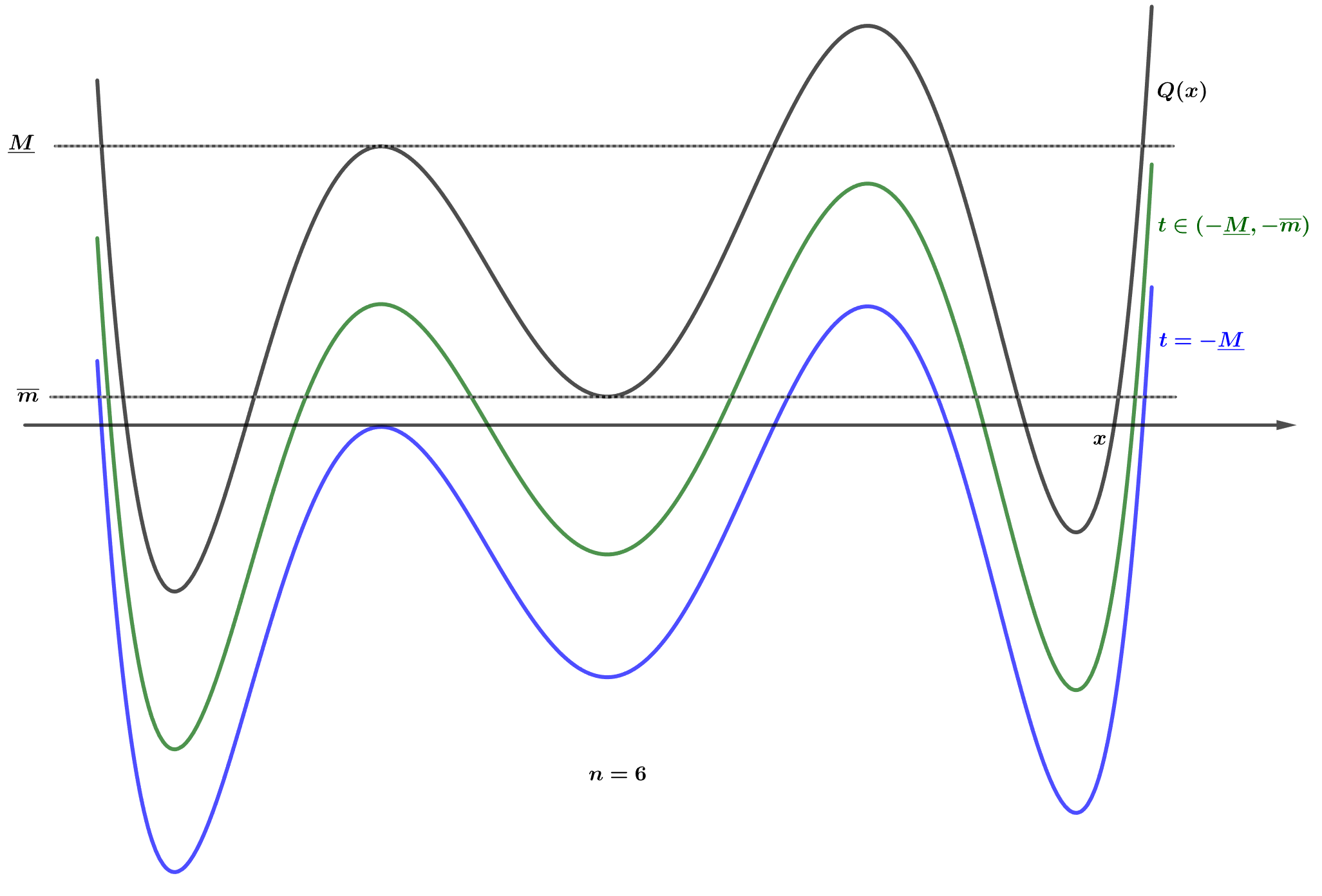}
  \caption{An example illustrating the case of equality in \eqref{ine:even-1}}
  \label{fig:figure2}
\end{figure}

Using the above observations and the same argument as in the case of  odd $n$, we can prove that $\tfrac{P''(\lambda_i)}{P'(\lambda_i)^2}$, $i=1,\ldots,n$,  are bounded from above on $(a, a+\delta]$  for some $\delta>0$.

Therefore Claim \ref{claim-5-2} holds.

\begin{claim}\label{claim-5-3}
  $\tfrac{P''(\lambda_i)}{P'(\lambda_i)^2}$, $i=1,\ldots,n$,  are bounded from below on $[b-\delta,b)$ for some $\delta>0$.  
\end{claim}   

We omit the proof, which is analogous to that of Claim \ref{claim-5-2}, and only point out the key property used:

In the case  $b=-\overline{m}$, for any index $i$ such that  $Q(\xi_i)=\overline{m} $, as $t\nearrow b$, we have  $\lambda_i\rightarrow \xi_i$ and $\lambda_{i+1}\rightarrow \xi_i$.  Since $Q'(\xi_i)=0$ and $Q''(\xi_i)>0$, it follows that
\[
\frac{P''(\lambda_i)}{P'(\lambda_i)^2}\to +\infty \ \text{ and }\  \frac{P''(\lambda_{i+1})}{P'(\lambda_{i+1})^2}\to +\infty.
\]

Now we are ready to prove the conclusion of the item. 

By (\ref{eq:ak}), Claim \ref{claim-5-2},  and Lemma \ref{lema:boundary}, 
for $k$ sufficiently large, 
\begin{equation}\label{ine:sup3}
    \int_{\partial D^a_k} \langle X,\nu\rangle \le C \int_{\partial D^a_k} |\nabla h| \to 0,  \quad \text{as } k \to \infty.
\end{equation}
Similarly, by \eqref{eq:bk}, Claim \ref{claim-5-3}, and Lemma \ref{lema:boundary},  for $k$ sufficiently large,
\begin{equation}\label{ine:sup4}
  \int_{\partial D^b_k} \langle X,\nu\rangle \le C \int_{\partial D^b_k} |\nabla h|,  \quad \text{as } k \to \infty.  
\end{equation}
Using  \eqref{eq:stokes}, together by \eqref{ine:sup3} and \eqref{ine:sup4}, we obtain
\[
\limsup_{k\to +\infty}\int_{D_k} \mathrm{div}(X) \leq\limsup_{k\to +\infty} \int_{\partial D^a_k} \langle X,\nu\rangle +\limsup_{k\to +\infty} \int_{\partial D^b_k} \langle X,\nu\rangle=0.
\]
This completes the proof of Theorem \ref{teo:M^n1-1}.
\end{proof}
We now prove Theorem \ref{T-Y}  proved in \cite{Tang-Yan23} by  Tang and Yan. Notice that for any $l\in\{1,\ldots, n\}$, the values of  $tr(A^k)$, $k=1,\ldots, l$ and the values of $\sigma_k(A)$, $k=1,\dots,l$ are determined each other. Consequently,
Theorem \ref{teo:M^n} is equivalent to  Theorem \ref{T-Y}. 
\begin{thm} [alternate version of Theorem \ref{T-Y}]
\label{teo:M^n}
Let $M^n$ $(n>3)$ be a closed Riemannian manifold.  
Suppose that  $\mathfrak{a}$ is a smooth symmetric tensor field of type $(0,2)$ on $M$ and $A$ is its associated
$(1,1)$-tensor field. Assume  that:
\begin{enumerate}
    \item $S_{M} \geq 0$;
    \item $\mathfrak{a}$ is Codazzian;
    \item the elementary symmetric functions $\sigma_k(A)$, $k=1,\dots,n-1$, are constant.
\end{enumerate}
Then $\sigma_n(A)$ is also constant, and hence all eigenvalues of $A$ are constant on $M$. Moreover, if $A$ has $n$ distinct eigenvalues somewhere on $M^n$, then $S_M \equiv 0$ on $M$.
\end{thm}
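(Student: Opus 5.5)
We argue by contradiction, combining Theorem \ref{teo:M^n1-1} with the divergence identity of Theorem \ref{thm:divX-intro2} and the sign information of Lemma \ref{lem-Lr-h-L}. Suppose $\sigma_n(A)$ is not constant on $M$. Then Theorem \ref{teo:M^n1-1} applies. With $[a,b]$ the image of $(-1)^n\sigma_n$, the set $D_0$ is nonempty and open. On $D_0$ the eigenvalues are distinct and smooth, and the vector field $X=\sum_i\nabla_{e_i}e_i$ is globally defined. Moreover there are regular values $a_k\searrow a$, $b_k\nearrow b$ such that $\limsup_k\int_{D_k}\dv(X)\le 0$, where $D_k=\{(-1)^n\sigma_n\in(a_k,b_k)\}\subset D_0$.

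\textbf{Step 1: a pointwise lower bound.} On $D_0$ all hypotheses of Lemma \ref{lem-Lr-h-L} hold: $\mathfrak a$ is Codazzian, the eigenvalues are pointwise distinct, and $\sigma_1,\dots,\sigma_{n-1}$ are constant. Hence
\[
\Psi=\tfrac12\sum_k L(k)(\sigma_n)_k^2\le 0,
\qquad L(k)<0 .
\]
Using Theorem \ref{thm:divX-intro2} and $S_M\ge0$, we get on $D_0$
\[
\dv(X)=\tfrac12 S_M-\Psi\ \ge\ -\Psi=\tfrac12\sum_k |L(k)|\,(\sigma_n)_k^2\ \ge 0 .
\]

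\textbf{Step 2: the contradiction.} Since the integrand is nonnegative and the $D_k$ increase to $D_0$, Fatou's lemma (or monotone convergence) gives
\[
0\le \int_{D_0}\Big(\tfrac12S_M-\Psi\Big)\le \liminf_k\int_{D_k}\dv(X)\le \limsup_k\int_{D_k}\dv(X)\le 0 .
\]
Therefore $\dv(X)\equiv0$ on $D_0$. In particular $\Psi\equiv0$ there. Because each $L(k)<0$, this forces $(\sigma_n)_k=0$ for every $k$, that is, $\nabla\sigma_n\equiv0$ on $D_0$. But $D_0=\{(-1)^n\sigma_n\in(a,b)\}$ is a nonempty open set on which $\sigma_n$ takes every value in $(a,b)$. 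Since $D_0$ is open, it is a union of open subsets of components of $M$, so $\sigma_n$ is locally constant on $D_0$. By Sard's theorem, the image of the critical set of $\sigma_n$ has measure zero, whereas here it contains the whole interval $(a,b)$ of positive length. This is a contradiction, so $\sigma_n$ is constant.

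\textbf{Step 3: conclusions.} Constancy of all $\sigma_k$ makes the characteristic polynomial constant, so its roots (the eigenvalues, ordered) are constant. For the final assertion, suppose $A$ has $n$ distinct eigenvalues at some point. Since the eigenvalues are constant, they are distinct everywhere, and Theorem \ref{T-W-Y} (whose proof is given above) yields $S_M\equiv0$.

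The main obstacle has already been absorbed into Theorem \ref{teo:M^n1-1}(iv): controlling the boundary flux as the level sets approach the degenerate values $a,b$, where eigenvalues may collide. What remains to check carefully is the passage to the limit in Step 2, namely that $\dv(X)\ge0$ is integrable on $D_0$ or at least allows monotone convergence. Fatou handles this without needing $X$ to extend across $\partial D_0$.
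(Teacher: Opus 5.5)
Your proof is correct and follows the paper's argument. You assume $\sigma_n$ is nonconstant, combine $\dv(X)=\frac12 S_M-\Psi\ge 0$ on $D_0$ (from Lemma \ref{lem-Lr-h-L}) with Theorem \ref{teo:M^n1-1}(iv) to force $\Psi\equiv 0$ and hence $\nabla\sigma_n\equiv 0$ on $D_0$, and then get the final assertion through $\Psi\equiv 0$ and $\int_M\dv(X)=0$, exactly as in Theorem \ref{T-W-Y}. Your Fatou and Sard steps (every value in $(a,b)$ would be a critical value) are a slightly more careful version of the paper's monotonicity step and its ``by continuity'' remark, which as written glosses over $D_0$ possibly being disconnected.
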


\begin{proof}
We assume for the sake of contradiction that $\sigma_n(A)$  is not constant. By Lemma \ref{lem-Lr-h-L}, $\Psi\leq 0$ on $D_0$. By $S_M\ge 0$  and  (\ref{div(X)}):
$\dv (X) =\frac12 S_M - \Psi$, it follows that $\mathrm{div}(X)\ge 0$ on $D_0$. From (iv) of Theorem \ref{teo:M^n1-1}, we have $\lim_{k\to +\infty}\int_{D_k}\dv(X)=0$ and then $\int_{D_k} \mathrm{div}(X)=0$ for all $k$. Hence $\dv(X)\equiv 0$ and then $\Psi\equiv 0$ on $D_0$.  By (i) of Lemma  \ref{lem-Lr-h-L}, $(\sigma_n)_i\equiv 0$, $i=1, \ldots, n,$ on $D_0$. Consequently,   $\sigma_n$ is constant on $D_0$ and by continuity it is constant on $M$, which yields a contradiction.

Therefore, $\sigma_n(A)$ is constant on $M$, and hence all eigenvalues of $A$ are constant on $M$.

If  $A$ has $n$ distinct eigenvalues somewhere on $M$, then all eigenvalues of $A$ are distinct constants on $M$. By Lemma  \ref{lem-Lr-h-L}, $\Psi\equiv 0$ on $M$. Hence 
\[\int_M\frac12S_M=\int_M\dv(X)=0.
\]
Since $S_M\geq 0$, we have $S_M\equiv 0$ on $M$. This completes the proof of Theorem~\ref{teo:M^n}.
\end{proof}

\begin{remark}[Smoothness of eigenvalues and eigenvectors]\label{Re-S75}
In applications, the orthonormal base can be chosen as the eigenvectors of a smooth symmetric $(0,2)$ tensor. The smoothness of the eigenvalues and eigenvectors of a smooth symmetric $(0,2)$ tensor field
was established by D.~H.~Singley in his work \cite{Singley75}.
Let $M^n$ be a smooth manifold endowed with two symmetric covariant tensor fields $G$ and $G'$, 
where $G$ is positive definite. 
Considering the $(1,1)$ tensor $A = G^{-1}G'$, Singley proved that the eigenvalues of $A$ 
(identified as the principal curvatures in the hypersurface case) are continuous on $M$
and smooth on an open dense subset of $M$. 
Moreover, in a neighborhood of any point in this open dense subset, one can choose the corresponding eigenvector fields to be smooth.

\end{remark}

\section{Vanishing determinant case} \label{sec:6}

In this section, we  prove  Theorem \ref{teo:M^n-(n-1)-intro2} using the same approach as in the proof of Theorem \ref{teo:M^n}.

\begin{lema}\label{lem:lambda_ij-sigma-variant} Let $D$ be an open subset  of an $n$-dimensional Riemannian manifold $M^n$ with an orthonormal frame $\{e_i\}$ and its dual coframe  $\{\theta_i\}$. 
Assume that $\lambda_1,\dots,\lambda_n$ are smooth  pointwise distinct real functions on  $D$. 
Let $\sigma_k=\sigma_k(\lambda_1,\dots,\lambda_n)$ denote the $k$-th elementary symmetric function, so that
\[
P(x):=\prod_{i=1}^n (x-\lambda_i)
= x^n - \sigma_1 x^{\,n-1} + \sigma_2 x^{\,n-2} - \cdots + (-1)^n \sigma_n.
\]
Let \(
d\lambda_i=\sum_{j=1}^n \lambda_{ij}\,\theta_j.
\)
If $\sigma_k$ is constant for all $k\neq n-1$, then 
\[
\lambda_{ij} \;=\; (-1)^{\,n}\,\frac{\lambda_i\,(\sigma_{n-1})_j}{P'(\lambda_i)}, \qquad i,j=1,\dots,n,
\]
where
\[
d\sigma_{n-1}=\sum_{j=1}^n (\sigma_{n-1})_j\,\theta_j, \text{ and }
P'(\lambda_i)=\prod_{k\neq i}^{n}(\lambda_i-\lambda_k).
\]
\end{lema}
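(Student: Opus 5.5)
The plan mirrors the proof of Proposition \ref{lem:lambda_ij-sigma}. The idea is to differentiate the identity $P(\lambda_i)=0$, regarding $P$ as a function of $x$ and of the coefficients $\sigma_1,\dots,\sigma_n$. On $D$ the $\lambda_i$ are smooth and pointwise distinct, so each $\sigma_k$ is smooth there and $P'(\lambda_i)=\prod_{k\neq i}(\lambda_i-\lambda_k)\neq 0$. Differentiating $P(\lambda_i)=0$ along $D$ gives
\[
P'(\lambda_i)\,d\lambda_i+\sum_{k=1}^n \frac{\partial P}{\partial \sigma_k}(\lambda_i)\,d\sigma_k=0,
\qquad \frac{\partial P}{\partial \sigma_k}(x)=(-1)^k x^{\,n-k}.
\]

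Since $\sigma_k$ is constant for every $k\neq n-1$, we have $d\sigma_k=0$ for those $k$. Only the $k=n-1$ term survives, with coefficient $(-1)^{n-1}\lambda_i^{\,1}=(-1)^{n-1}\lambda_i$. This yields
\[
P'(\lambda_i)\,d\lambda_i+(-1)^{n-1}\lambda_i\,d\sigma_{n-1}=0 .
\]
Dividing by $P'(\lambda_i)\neq0$ then gives
\[
d\lambda_i=(-1)^{n}\frac{\lambda_i\,d\sigma_{n-1}}{P'(\lambda_i)}.
\]

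Finally, I write $d\sigma_{n-1}=\sum_j(\sigma_{n-1})_j\theta_j$ and compare the $\theta_j$-coefficients with $d\lambda_i=\sum_j\lambda_{ij}\theta_j$. This gives the stated formula $\lambda_{ij}=(-1)^n\lambda_i(\sigma_{n-1})_j/P'(\lambda_i)$.

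There is no real obstacle. The only points needing care are the sign bookkeeping, namely $-(-1)^{n-1}=(-1)^n$, and the remark that the distinctness hypothesis is exactly what guarantees $P'(\lambda_i)\neq 0$, so that the division is legitimate on all of $D$.
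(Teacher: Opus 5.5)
Your proposal is correct and follows essentially the same argument as the paper: you differentiate $P(\lambda_i)=0$, use $d\sigma_k=0$ for $k\neq n-1$ so that only the term $(-1)^{n-1}\lambda_i\,d\sigma_{n-1}$ survives, and divide by $P'(\lambda_i)\neq 0$. Your sign bookkeeping $-(-1)^{n-1}=(-1)^n$ matches the stated formula.
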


\begin{proof}
Differentiating $P(\lambda_i)=0$ and using the hypothesis $d\sigma_k=0$ for every $k\neq n-1$,  we have
\[
P'(\lambda_i)\,d\lambda_i + (-1)^{\,n-1}\lambda_i\,d\sigma_{n-1}=0.
\]
Solving for \(d\lambda_i\) gives
\[
d\lambda_i 
= (-1)^n\frac{\lambda_i}{P'(\lambda_i)}d\sigma_{n-1},
\]
which yields the component formula
\[
\lambda_{ij}=(-1)^n\frac{\lambda_i\,(\sigma_{n-1})_j}{P'(\lambda_i)}.
\]

\end{proof}

\begin{prop}\label{lem-Lr-h-(n-1)} Let $\mathfrak{a}$ be a smooth Codazzi symmetric $(0,2)$-tensor field on $D$ satisfying Assumption \ref{assumption-3}. Let \(X=\sum_{i=1}^n\nabla_{e_i}e_{i}\) and \(\Psi\) be as in Theorem \ref{thm:divX-intro2}. Suppose that its eigenvalues $\lambda_i$  are pointwise distinct on $D$.  
If $\sigma_k=\sigma_k(\lambda_1,\dots,\lambda_n)$ is constant for all $ k\neq n-1$,
then:  
\begin{enumerate}[(i)]
\item $\Gamma_{ii}^k
=b_{ik} (\sigma_{n-1})_k$, for \(i\neq k\),  \ $i, k\in \{1, \ldots, n\}$;
\item $\displaystyle\langle X,\nabla \sigma_{n-1}\rangle
=\sum_{k=1}^n u_k\,(\sigma_{n-1})_k^2$;
\item $\displaystyle\Psi
=\frac{1}{2}\sum_{k=1}^n G(k)\,(\sigma_{n-1})_k^2$.
    \end{enumerate}
Here \(\displaystyle
b_{ik}=\frac{(-1)^{n}\lambda_i}{(\lambda_i-\lambda_k)P'(\lambda_i)}, \text{ for } i\neq k, \quad    u_k=\sum_{\substack{i=1\\i\neq k}}^n b_{ik},\quad 
G(k)=\sum_{\substack{i\neq j\\i,j\neq k}}^n b_{ik}b_{jk}.
\)
\end{prop}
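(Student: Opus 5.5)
This proposition is the analogue of Lemmas \ref{lem-Lr-h} and \ref{lem-Lr-h-L}(i), with Lemma \ref{lem:lambda_ij-sigma-variant} replacing Proposition \ref{lem:lambda_ij-sigma}. The proof therefore follows the same three steps, with $h$ now denoting $\sigma_{n-1}$ and $\nabla h=\sum_k h_k e_k$, where $h_k=(\sigma_{n-1})_k$.

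\textbf{Step (i).} Since $\mathfrak{a}$ is Codazzian with pointwise distinct eigenvalues on $D$, Lemma \ref{lem:a_is_Codazzian} gives $\Gamma_{ii}^k=\lambda_{ik}/(\lambda_i-\lambda_k)$ for $i\neq k$. Lemma \ref{lem:lambda_ij-sigma-variant} gives
\[
\lambda_{ik}=(-1)^n\frac{\lambda_i h_k}{P'(\lambda_i)}.
\]
Substituting yields $\Gamma_{ii}^k=b_{ik}h_k$ with $b_{ik}$ exactly as stated.

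\textbf{Step (ii).} Write $X=\sum_{i,k}\Gamma_{ii}^k e_k$. Because $\Gamma_{kk}^k=0$, the diagonal terms drop out and
\[
\langle X,\nabla h\rangle=\sum_k\Big(\sum_{i\neq k}\Gamma_{ii}^k\Big)h_k=\sum_k\Big(\sum_{i\neq k}b_{ik}\Big)h_k^2=\sum_k u_k h_k^2 .
\]
This is purely a matter of definitions, since $u_k$ is defined as the raw sum $\sum_{i\neq k}b_{ik}$. No closed form like the one from Proposition \ref{lem:partial-fraction-identity} is claimed here, so that proposition is not needed.

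\textbf{Step (iii).} By \eqref{eq:Psi(p)} in Lemma \ref{lem:a_is_Codazzian}, the sum $\sum_{i<j,\,k\neq i,j}\Gamma_{ij}^k\Gamma_{ji}^k$ vanishes. Hence
\[
\Psi=\sum_{\substack{i<j\\ k\neq i,j}}\Gamma_{ii}^k\Gamma_{jj}^k=\sum_{\substack{i<j\\ k\neq i,j}}b_{ik}b_{jk}h_k^2 .
\]
Exchange the order of summation, putting $k$ outermost. For fixed $k$, the summand $b_{ik}b_{jk}$ is symmetric in $(i,j)$, so the sum over ordered pairs $i<j$ with $i,j\neq k$ equals half the sum over all $i\neq j$ with $i,j\neq k$. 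This gives
\[
\Psi=\frac12\sum_k G(k)h_k^2 .
\]

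There is no real obstacle. The only points that need care are the sign and index bookkeeping in Step (i), namely the sign coming from $\partial P/\partial\sigma_{n-1}=(-1)^{n-1}x$, and the symmetrization in Step (iii). Both are identical to the corresponding steps for $\sigma_n$.
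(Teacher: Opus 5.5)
Your proposal is correct and follows the paper's proof essentially step for step. You obtain (i) from Lemma \ref{lem:a_is_Codazzian} combined with Lemma \ref{lem:lambda_ij-sigma-variant}, (ii) directly from the definition of $u_k$ by expanding $X=\sum_{i,k}\Gamma_{ii}^k e_k$, and (iii) from \eqref{eq:Psi(p)} followed by symmetrizing over $i<j$.
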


\begin{proof}  Set \(h=\sigma_{n-1}\).
From Lemma~\ref{lem:lambda_ij-sigma-variant}, we have that
\[
d\lambda_i=\sum_{j=1}^n \lambda_{ij}\,\theta_j,
\qquad 
\lambda_{ij}=(-1)^{\,n}\frac{\lambda_ih_j}{P'(\lambda_i)}.
\]
Thus, by Lemma \ref{lem:a_is_Codazzian}, for \(i\neq k\),
\[
\Gamma_{ii}^k=\frac{\lambda_{ik}}{\lambda_i-\lambda_k}
=\frac{(-1)^{\,n}\lambda_i h_k}{(\lambda_i-\lambda_k)P'(\lambda_i)}
=b_{ik}\,h_k.
\]
Since \(X=\sum_{i,j}\Gamma_{ii}^j e_j\) and \(\nabla h=\sum_k h_k e_k\),
\[
\langle X,\nabla h\rangle
=\sum_{k=1}^n\big(\sum_{\substack{i=1\\i\neq k}}^n\Gamma_{ii}^k\big)h_k
=\sum_{k=1}^n\big(\sum_{\substack{i=1\\i\neq k}}^n b_{ik}\big)h_k^2
=\sum_{k=1}^n u_k\,h_k^2.
\]
Since $a$ is Codazzian, using  Lemma \ref{lem:a_is_Codazzian}, it follows that
\[
\Psi=\sum_{\substack{i<j\\k\neq i,j}}^{n}\Gamma_{ii}^k\Gamma_{jj}^k
=\sum_{\substack{i<j\\k\neq i,j}}^{n}b_{ik}b_{jk}\,h_k^2
=\frac{1}{2}\sum_{k=1}^n
\big(\sum_{\substack{i\neq j\\i,j\neq k}}^{n}b_{ik}b_{jk}\big)h_k^2
=\frac{1}{2}\sum_{k=1}^n G(k)\,h_k^2.
\]
\end{proof}
If $\sigma_n\equiv 0$ on $D$, we  have the further conclusion. For convenience, we assume $\lambda_n=0$ on $D$ without loss of generality. Under this convention, observe that, for $k=1, \ldots, n-1$, 
\begin{equation}\label{eq:sigma-k}
  \sigma_k=\sigma_k(\lambda_1,\ldots, \lambda_{n-1},0)=\sigma_{k}(\lambda_1,\ldots,\lambda_{n-1}).  
\end{equation}
The characteristic polynomial associated with $\lambda_1, \ldots,\lambda_{n-1}$ on $M$ is given by
\begin{equation}
\label{eq:P(lambda)-sigma-2}
P_1(x)=\prod_{i=1}^{n-1}(x-\lambda_i)
=x^{n-1}-\sigma_1x^{n-2}+\sigma_2x^{n-3}-\cdots+(-1)^{n-2}\sigma_{n-2}x+(-1)^{n-1}\sigma_{n-1}.
\end{equation}

\begin{lema}\label{x-n-1}
\label{lem-x}Under the assumptions and notation of Proposition \ref{lem-Lr-h-(n-1)}, assume that $\lambda_n = 0$ on \(D\).   Then 
\begin{enumerate}[(i)]
\item
$\Gamma_{ii}^k
=b_{ik} (\sigma_{n-1})_k$, for \(i\neq k\),  \ $i, k\in \{1, \ldots, n\}$;
\item $\displaystyle\langle X,\nabla \sigma_{n-1}\rangle
=\sum_{k=1}^n u_k\,(\sigma_{n-1})_k^2$.

\end{enumerate}
\noindent Here  \[
b_{ik} =
\begin{cases}
\dfrac{(-1)^{n}\,}{\bigl(\lambda_i - \lambda_k\bigr)\, P_1'(\lambda_i)}, & i \neq k, n \\[10pt]
0, & i = n \text{ and } k\neq n.
\end{cases} \text{ and } 
u_k=\begin{cases}(-1)^n\dfrac{P_1''(\lambda_k)}{2P_1'(\lambda_k)^2}, & k\neq n\\
(-1)^{n+1}\dfrac1{P'(0)}, & k=n.
\end{cases}
\]

\end{lema}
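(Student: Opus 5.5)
Lemma \ref{x-n-1} is a specialization of Proposition \ref{lem-Lr-h-(n-1)} to the case $\lambda_n\equiv 0$. Items (i) and (ii) already hold with the general coefficients $b_{ik}=\frac{(-1)^n\lambda_i}{(\lambda_i-\lambda_k)P'(\lambda_i)}$ and $u_k=\sum_{i\neq k}b_{ik}$. So the whole task is algebraic: rewrite these coefficients using $P(x)=xP_1(x)$.

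First, since $P(x)=xP_1(x)$, we have $P'(x)=P_1(x)+xP_1'(x)$. For $i\neq n$ we have $P_1(\lambda_i)=0$, hence $P'(\lambda_i)=\lambda_iP_1'(\lambda_i)$. Note $\lambda_i\neq0$ because the eigenvalues are distinct and $\lambda_n=0$. Substituting gives, for $i\neq k,n$,
\[
b_{ik}=\frac{(-1)^n}{(\lambda_i-\lambda_k)P_1'(\lambda_i)} .
\]
For $i=n$ the numerator $\lambda_n$ vanishes, so $b_{nk}=0$. This proves (i) in the stated form. One can also check it directly: $d\lambda_n=0$ gives $\lambda_{nk}=0$, so $\Gamma_{nn}^k=0$ by Lemma \ref{lem:a_is_Codazzian}.

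For (ii) with $k\neq n$, we get $u_k=\sum_{i\neq k,\,i\le n-1}\frac{(-1)^n}{(\lambda_i-\lambda_k)P_1'(\lambda_i)}$, since the $i=n$ term vanishes. We then apply Proposition \ref{lem:partial-fraction-identity} to $P_1$, whose $m=n-1\ge 3$ roots $\lambda_1,\dots,\lambda_{n-1}$ are distinct. Writing $\frac{1}{\lambda_i-\lambda_k}=-\frac{1}{\lambda_k-\lambda_i}$, the sum equals $(-1)^{n+1}\cdot\bigl(-\tfrac12 P_1''(\lambda_k)/P_1'(\lambda_k)^2\bigr)=(-1)^n\frac{P_1''(\lambda_k)}{2P_1'(\lambda_k)^2}$, as claimed.

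For $k=n$, we have $u_n=\sum_{i=1}^{n-1}\frac{(-1)^n\lambda_i}{\lambda_iP'(\lambda_i)}=(-1)^n\sum_{i=1}^{n-1}\frac{1}{P'(\lambda_i)}$. The step needing care is evaluating this sum. We use the classical identity $\sum_{i=1}^n 1/P'(\lambda_i)=0$, valid for $n\ge2$. It follows from the partial fraction expansion of $1/P(x)$ in the proof of Proposition \ref{lem:partial-fraction-identity}: multiply by $x$ and let $x\to\infty$, noting $x/P(x)\to0$. Hence $\sum_{i=1}^{n-1}1/P'(\lambda_i)=-1/P'(\lambda_n)=-1/P'(0)$, so $u_n=(-1)^{n+1}/P'(0)$. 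Here $P'(0)=P_1(0)=(-1)^{n-1}\sigma_{n-1}\neq0$.

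The only subtle points are sign bookkeeping and justifying this vanishing-sum identity; everything else is substitution into Proposition \ref{lem-Lr-h-(n-1)}.
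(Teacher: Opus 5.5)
Your proof is correct and follows the paper's argument essentially step for step. You use $P'(\lambda_i)=\lambda_iP_1'(\lambda_i)$ for $i\neq n$ to rewrite $b_{ik}$, and you apply the partial-fraction identity to $P_1$ for $u_k$ with $k\neq n$. For $u_n$ you use $\sum_{i=1}^n 1/P'(\lambda_i)=0$. You obtain that identity by letting $x\to\infty$ in $x/P(x)$ rather than by comparing $x^{n-1}$ coefficients, which is an equivalent argument. Proposition \ref{lem:partial-fraction-identity} only needs $m=n-1\ge 2$, not $m\ge 3$.
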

\begin{proof} Observe that \[P_1'(x)=\frac{P'(x)}{x}-\frac{P(x)}{x^2}.\]
By Proposition \ref{lem-Lr-h-(n-1)}, \(\Gamma_{ii}^{\,k}\) and \(b_{ik}\) follow from  \(P_1'(\lambda_i)=\dfrac{P'(\lambda_i)}{\lambda_i}\) for  \(i\ne n\)  and \(\lambda_n=0\).

For $k\neq n$, by Propositions \ref{lem-Lr-h-(n-1)} and \ref{lem:partial-fraction-identity},
\[u_k=\sum_{\substack{i=1\\i\neq k}}^n b_{ik}=\sum_{\substack{i=1\\i\neq k}}^{n-1} b_{ik}=\sum_{\substack{i=1\\i\neq k}}^{n-1} \dfrac{(-1)^{n}\,}{\bigl(\lambda_i - \lambda_k\bigr)\, P_1'(\lambda_i)}=(-1)^n\frac{P_1''(\lambda_k)}{2P_1'(\lambda_k)^2}.
\]

\begin{claim}\label{claim-6p}
   \(
\sum_{i=1}^n\tfrac{1}{P'(\lambda_i)} = 0.
\)
\end{claim}

In fact, by  the partial fraction decomposition of $P(x)^{-1}$: 
\[
\frac{1}{P(x)}=
\sum_{i=1}^n \frac{1}{P'(\lambda_i)}\cdot\frac{1}{x-\lambda_i},
\]
we have
\[
1 =\sum_{i=1}^n \frac{1}{P'(\lambda_i)}\cdot\frac{P(x)}{x-\lambda_i}.
\]
Therefore, comparing the coefficients of \(x^{\,n-1}\) on both sides yields \(\sum_{i=1}^{\,n} \tfrac{1}{P'(\lambda_i)}= 0\), as claimed.

On the other hand, by Propositions \ref{lem-Lr-h-(n-1)} and Claim \ref{claim-6p}, 
\[
u_n=\sum_{i=1}^{n-1} b_{in}=\sum_{i=1}^{n-1}\frac{(-1)^n\lambda_i}{\lambda_iP'(\lambda_i)}=(-1)^n\sum_{i=1}^{n-1}\frac1{P'(\lambda_i)}=(-1)^{n+1}\frac1{P'(0)}.
\]
\end{proof}

\begin{lema}\label{cor:(n-1)}
Let $n>3$. Under the assumptions and notation of Proposition \ref{lem-Lr-h-(n-1)}, suppose that $\lambda_n = 0$ on \(D\).   If the eigenvalues \(\lambda_1,\dots,\lambda_{n}\) are all nonnegative or all nonpositive, then on $D$, 
\[G(k)< 0, \qquad k=1,\ldots,n,\]
and
\[\displaystyle\Psi
=\frac{1}{2}\sum_{k=1}^n G(k)\,(\sigma_{n-1})_k^2\le 0.\]
\end{lema}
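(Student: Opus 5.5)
The plan is to use the identity $G(k)=\bigl(\sum_{i\neq k} b_{ik}\bigr)^2-\sum_{i\neq k} b_{ik}^2$, which is just the expansion of the square (it equals twice the sum over $i<j$, $i,j\neq k$, of $b_{ik}b_{jk}$). I treat $k\le n-1$ and $k=n$ separately. Once $G(k)<0$ is known for every $k$, the formula $\Psi=\frac12\sum_k G(k)(\sigma_{n-1})_k^2\le 0$ follows immediately from Proposition~\ref{lem-Lr-h-(n-1)}(iii).

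\emph{Case $k\le n-1$.} By Lemma~\ref{x-n-1}, $b_{nk}=0$. For $i,k\le n-1$ with $i\neq k$,
\[
b_{ik}=\frac{(-1)^n}{(\lambda_i-\lambda_k)P_1'(\lambda_i)}.
\]
Up to the global sign $\pm1$, which disappears in the products $b_{ik}b_{jk}$, this is exactly the coefficient $c_{ik}$ of Lemma~\ref{lem-Lr-h}, built from the $m=n-1$ distinct numbers $\lambda_1,\dots,\lambda_{n-1}$ with characteristic polynomial $P_1$. Hence $G(k)$ is precisely the quantity $L(k)$ of Lemma~\ref{lem-Lr-h-L} for these $n-1$ numbers. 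Since $n>3$ gives $n-1\ge 3$, the Tang--Wei--Yan inequality (Lemma~\ref{lem-TWY} in the appendix) applies and yields $G(k)<0$. No sign hypothesis on the eigenvalues is needed in this case.

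\emph{Case $k=n$.} Here
\[
b_{in}=\frac{(-1)^n\lambda_i}{\lambda_iP'(\lambda_i)}=\frac{(-1)^n}{P'(\lambda_i)},\qquad i\le n-1 .
\]
Set $w_i=1/P'(\lambda_i)$. Then $G(n)=\bigl(\sum_{i<n}w_i\bigr)^2-\sum_{i<n}w_i^2$, and Claim~\ref{claim-6p} gives $\sum_{i<n}w_i=-1/P'(0)$. Since $\lambda_n=0$ and the eigenvalues are distinct, $|P'(0)|=\prod_{i<n}|\lambda_i|$. It therefore suffices to find a single index $i_0$ with $|w_{i_0}|>1/\prod_{i<n}|\lambda_i|$, because then
\[
\sum_i w_i^2\ge w_{i_0}^2>\Bigl(\sum_i w_i\Bigr)^2 .
\]
This is where the sign hypothesis enters. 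Suppose first that all eigenvalues are nonnegative, so $0=\lambda_n<\lambda_1<\dots<\lambda_{n-1}$ after relabeling. Take $i_0=1$, the eigenvalue closest to $0$. Then
\[
|P'(\lambda_1)|=\lambda_1\prod_{1<k<n}(\lambda_k-\lambda_1)<\lambda_1\prod_{1<k<n}\lambda_k=\prod_{i<n}\lambda_i,
\]
since $0<\lambda_k-\lambda_1<\lambda_k$. This gives the required strict inequality, so $G(n)<0$. The nonpositive case is symmetric: take $i_0$ to be the eigenvalue closest to $0$ (the largest one), and use $|\lambda_k-\lambda_{i_0}|<|\lambda_k|$ for all other $k<n$.

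The main point to get right is the case $k=n$. The coefficients $b_{in}$ do not fit the Tang--Wei--Yan pattern directly. My first attempt at reducing to it, via the reciprocals $1/\lambda_i$, produces non-constant extra factors $\lambda_i^{n-1}$, so I will instead use the elementary comparison above, in which the sign hypothesis (iii) is exactly what makes $|\lambda_k-\lambda_{i_0}|<|\lambda_k|$ hold. I will also check the bookkeeping carefully: that the relabeling with $\lambda_n=0$ is compatible with Lemma~\ref{x-n-1}, and that every $\lambda_i\neq0$ for $i<n$, which follows from distinctness.
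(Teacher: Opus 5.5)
Your proposal is correct and follows the paper's proof essentially step for step. For $k\le n-1$ you use $b_{nk}=0$ and the Tang--Wei--Yan inequality applied to the $n-1\ge 3$ numbers $\lambda_1,\dots,\lambda_{n-1}$. For $k=n$ you use $b_{in}=(-1)^n/P'(\lambda_i)$ together with Claim~\ref{claim-6p} and the comparison $|P'(\lambda_{i_0})|<|P'(0)|$.

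One point is in your favour: you explicitly take $i_0$ to be the nonzero eigenvalue closest to $0$. The paper's wording uses a fixed $\lambda_1$, and under the increasing labeling that fails in the nonpositive case, so your choice is slightly more careful than the paper's.
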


\begin{proof} From Lemma
\ref{lem-x},  for \(i \neq k\),  \(i, k= 1,\hdots,n\),
\[
b_{ik}=\dfrac{(-1)^n}{\displaystyle (\lambda_i-\lambda_k) P_1'(\lambda_i)}=\dfrac{(-1)^n}{\displaystyle (\lambda_i-\lambda_k) \prod_{\substack{l=1 \\ l \neq i}}^{n-1} (\lambda_i-\lambda_l)}.
\]
Since \(\lambda_1,\dots,\lambda_{n-1}\) are distinct real numbers, it follows from Lemma \ref{lem-TWY} in the appendix,
\begin{equation}\label{L-(n-1)}
  \sum_{\substack{i \neq j \\ i,j \neq k}}^{n-1} b_{ik} b_{jk} < 0.  
\end{equation}
Hence, for $k=1, \ldots, n-1$, using (\ref{L-(n-1)}) and \(b_{nk}=0\), we have
\[
\begin{split}
G(k)
&=\sum_{\substack{i \neq j \\ i,j \neq k}}^n b_{ik} b_{jk}  = \sum_{\substack{i \neq j \\ i,j \neq k}}^{n-1} b_{ik} b_{jk} < 0.
\end{split}
\]
For $k=n$,
\[
G(n) = \sum_{\substack{i \neq j \\ i,j \neq n}}^n b_{in} b_{jn} = \sum_{i \neq j}^{n-1} \dfrac{1}{\lambda_i P_1'(\lambda_i)\lambda_j P_1'(\lambda_j)} = \sum_{i \neq j}^{n-1} \dfrac{1}{P'(\lambda_i) P'(\lambda_j)}.
\]
Set \(d_i=\dfrac{1}{P'(\lambda_i)}, \quad i=1,\dots,n\).
By Claim \ref{claim-6p}, we rewrite $G(n)$ as
\begin{equation*}
G(n) = \sum_{i \neq j}^{n-1} d_id_j = \big( \sum_{i=1}^{n-1} d_i\big)^2 - \sum_{i=1}^{n-1} d_i^2=d_n^2- \sum_{i=1}^{n-1} d_i^2.
\end{equation*}

\smallskip
Since the eigenvalues \(\lambda_1,\dots,\lambda_{n}\) are distinct and all nonnegative or all nonpositive, we have that, in any case, \(|\lambda_1-\lambda_i|<|\lambda_i|, \quad i=2,\dots,n-1\). Then
\[
\begin{split}
|d_1|^{-1}
&=|P'(\lambda_1)|=|\lambda_1 (\lambda_1-\lambda_2) \dots (\lambda_1-\lambda_{n-1})|  \\
&<|\lambda_1 \lambda_2 \dots \lambda_{n-1}|= |P'(0)| = |d_n|^{-1}.
\end{split}
\]
This implies that
\[
d_n^2 < d_1^2 < \sum_{i=1}^{n-1} d_i^2.
\]
Thus 
\(
G(n) = d_n^2 - \sum_{i=1}^{n-1} d_i^2<0.
\)
Therefore
\(
\Psi\le 0 \quad\text{on } D.
\)
\end{proof}

The proof of the following Theorem \ref{teo:M^n2} is analogous to that of Theorem \ref{teo:M^n1-1} with the necessary modifications.
\begin{thm}
\label{teo:M^n2}
Let $M^n$ $(n>3)$ be a closed $n$-dimensional Riemannian manifold. 
Suppose that $\mathfrak{a}$ is a smooth Codazzi symmetric $(0,2)$-tensor field on $M^n$ and $A$ is its associated $(1,1)$-tensor field. 
Assume that the elementary symmetric functions $\sigma_k=\sigma_k(A)$ of $A$, $k=1,\dots,n-2$, are constant, \(\sigma_n=\sigma_n(A) \equiv 0\), and $\sigma_{n-1}=\sigma_{n-1}(A)$ is  not constant on $M$ and not zero for all $p\in M$. Then:
\begin{enumerate}[(i)]
    \item The image of $(-1)^{n-1}\sigma_{n-1}$ is a compact interval $[a,b]$ with $a<b$.
    \item Let $D_0=\{p\in M\mid (-1)^{n-1}\sigma_{n-1}(p)\in (a,b)\} $.  For each $p\in D_0$, in addition to a  zero eigenvalue, denoted by  $\lambda_n=0$,  $A$ has $n-1$ distinct nonzero eigenvalues   \[\lambda_1(p)<\lambda_2(p)<\cdots<\lambda_{n-1}(p). \] 
    Moreover, the eigenvalues $\lambda_i $  are smooth on $D_0$ and locally there exist smooth orthonormal eigenvectors  $e_i$ associated with $\lambda_i$, $i=1, \ldots, n$.  
   \item Let $e_i, i=1,\ldots, n$, be as in (ii). Then the vector field  \[X=\sum_{i=1}^n\nabla_{e_i}e_{i}\] is well-defined on $D_0$.    
    \item There exist two sequences $a_k \text{ and }  b_k$  such that $[a_k, b_k]\subset (a,b)$,  $\displaystyle\lim_{k\to +\infty}a_k=a$ and $\displaystyle\lim_{k\to +\infty}b_k= b$, and  
\[
\limsup_{k\to +\infty} \int_{D_k} \mathrm{div}(X)\le 0, \]
\end{enumerate}
where $D_k:=\{p\in M; (-1)^{n-1}\sigma_{n-1}(p)\in (a_k,b_k) \}$ and  $X$ is defined in (iii).
\end{thm}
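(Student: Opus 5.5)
The plan is to repeat the proof of Theorem \ref{teo:M^n1-1}, working with the reduced polynomial $P_1$ of degree $n-1$ from \eqref{eq:P(lambda)-sigma-2} in place of $P$. Since $\sigma_n\equiv0$, every point has $0$ as an eigenvalue. Since $\sigma_{n-1}\neq 0$ everywhere, $0$ is a simple eigenvalue and the other $n-1$ eigenvalues are nonzero. They are exactly the roots of $P_1$.

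For (i): $\sigma_{n-1}$ is smooth and nonconstant on the closed connected $M$, so the image of $(-1)^{n-1}\sigma_{n-1}$ is a compact interval $[a,b]$ with $a<b$.

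For (ii): write $P_1(x)=Q_1(x)+t$ with $t=(-1)^{n-1}\sigma_{n-1}$ and $Q_1$ a fixed polynomial. Let $\lambda_1\le\dots\le\lambda_{n-1}$ be the nonzero eigenvalues, which are continuous.
\begin{itemize}
\item The $n-1$ nonzero eigenvalues must be pairwise distinct at some point. Otherwise $P_1'(\lambda_i)=0$ for some $i$ at every point of Singley's open dense set $V$ (Remark \ref{Re-S75}). Differentiating $P_1(\lambda_i)=0$ gives $P_1'(\lambda_i)d\lambda_i+(-1)^{n-1}d\sigma_{n-1}=0$, so $d\sigma_{n-1}=0$ on $V$, hence everywhere, a contradiction.
\item At such a point the critical points $\xi_1<\dots<\xi_{n-2}$ of $Q_1$ interlace the roots and alternate between maxima and minima. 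They do not depend on $p$.
\item Define $\underline M$ and $\overline m$ for $Q_1$ as before. Real-rootedness forces $[a,b]\subset[-\underline M,-\overline m]$. For $t\in(a,b)$ the roots of $P_1$ are therefore simple and interlace the $\xi_i$.
\item On $D_0$ the root $0$ differs from every $\lambda_i$ because $\sigma_{n-1}\neq0$. So all $n$ eigenvalues are distinct on $D_0$, and Singley's remark gives smooth eigenvalues and local smooth orthonormal eigenframes.
\end{itemize}

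For (iii): as in Theorem \ref{T-W-Y}, frames agree up to sign on overlaps, so $X$ is globally defined on $D_0$.

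For (iv): take regular values $a_k\searrow a$, $b_k\nearrow b$ by Sard, and set $h=\sigma_{n-1}$. Stokes gives $\int_{D_k}\dv X=\int_{\partial D_k^a}+\int_{\partial D_k^b}\langle X,\nu\rangle$. On the two boundary pieces $\nu=\mp(-1)^{n-1}\nabla h/|\nabla h|$. Lemma \ref{x-n-1}(ii) then gives
\[
\langle X,\nu\rangle=\mp\frac{(-1)^{n-1}}{|\nabla h|}\sum_{k} u_k h_k^2 .
\]
For $k\neq n$ this contributes $\pm\tfrac12\sum_{k<n}\frac{P_1''(\lambda_k)}{P_1'(\lambda_k)^2}\frac{h_k^2}{|\nabla h|}$. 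For $k=n$ it contributes $\pm\frac{h_n^2}{P'(0)|\nabla h|}$.
\begin{itemize}
\item The $k=n$ term is harmless. On $M$ we have $P'(0)=\pm\sigma_{n-1}$ (up to sign), which is bounded away from zero by compactness. So this term is bounded by $C|\nabla h|$.
\item I will check the signs so that, after the sign bookkeeping, the $k<n$ terms on $\partial D_k^a$ come with the sign of $-\frac{P_1''}{P_1'^2}$ (as in \eqref{eq:ak}) or its reverse, matching Theorem \ref{teo:M^n1-1}.
\item I then re-prove Claims \ref{claim-5-2} and \ref{claim-5-3} for $P_1$. Near $t=a=-\underline M$, roots colliding at a local maximum $\xi_i$ make $P_1''/P_1'^2\to-\infty$. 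Near $b=-\overline m$, roots colliding at a local minimum make it $\to+\infty$. All other roots converge to simple roots.
\item Hence the integrand is bounded above by $C|\nabla h|$ on both boundary pieces for large $k$.
\end{itemize}
Lemma \ref{lema:boundary}, applied to $u=\pm(-1)^{n-1}\sigma_{n-1}$ at its minimum, gives $\int_{\partial D_k}|\nabla h|\to0$, and therefore $\limsup\int_{D_k}\dv X\le0$.

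I expect the main obstacle to be the sign bookkeeping in this last step. The extra factor $(-1)^n$ in $u_k$ for $P_1$, compared with $(-1)^{n+1}$ for $P$, has to combine with the orientation of $\nu$ so that the divergent terms have the favorable sign. If the signs come out reversed, I would reverse the orientation convention by using $(-1)^{n-1}\sigma_{n-1}$ as the parameter $t$ consistently. That choice makes $P_1=Q_1+t$ exactly parallel to $P=Q+t$, so the earlier analysis transfers verbatim.
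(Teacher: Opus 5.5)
Your proposal is correct and follows the paper's proof essentially step by step: nonemptiness of the set where the nonzero eigenvalues are distinct via Singley's open dense set, the translated family $P_1=Q_1+t$ with $[a,b]\subset[-\underline{M},-\overline{m}]$, Sard and Stokes on $D_k$, the endpoint analysis of $P_1''/P_1'^2$, and Lemma \ref{lema:boundary}. Your explicit bound on the $k=n$ term via $|P'(0)|=|\sigma_{n-1}|\ge c>0$ fills in a step the paper leaves implicit, and your sign worry is unfounded, since your own display already gives $+\tfrac12 P_1''/P_1'^2$ on $\partial D_k^a$ and $-\tfrac12 P_1''/P_1'^2$ on $\partial D_k^b$, which are exactly the favorable signs (the $k=n$ term actually carries $\mp$ rather than $\pm$, which is harmless because you bound it in absolute value).
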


\begin{proof}

$(i)$ Since $M^n$ is compact and  $\sigma_{n-1}$ is smooth and not constant,  the image of $(-1)^{n-1}\sigma_{n-1}$  is a compact interval $[a,b]$ with $a<b$.

$(ii)$  For each $p\in M$, since $\sigma_n=0$ and $\sigma_{n-1}\neq 0$, $A$ has a unique zero eigenvalue, denoted by $\lambda_n=0$ without loss of generality. Denote by $\lambda_1(p)\le\cdots\le\lambda_{n-1}(p)$ the other nonzero eigenvalues of $A(p)$.  Then it is known that $\lambda_i$, $i=1,\ldots, n$, are continuous on $M$ and there exists an open dense subset $V\subset M $ on which all $\lambda_i$ are smooth (see Remark \ref{Re-S75}). 
Define the open subset of $M$ by
\[
D:=\big\{p\in M^n \mid \lambda_1(p)<\lambda_2(p)<\cdots<\lambda_{n-1}(p)\big\}.
\]

\begin{claim}\label{claim-6-1}
     $D$ is nonempty.
\end{claim}

Assume, for the sake of contradiction,  that $D=\emptyset$.

We consider the characteristic polynomial associated with $\lambda_1, \ldots,\lambda_{n-1}$ on $M$:
\begin{equation}
\label{eq:P(lambda)-sigma}
P_1(x)=\prod_{i=1}^{n-1}(x-\lambda_i)
=x^{n-1}-\sigma_1x^{n-2}+\sigma_2x^{n-3}-\cdots+(-1)^{n-2}\sigma_{n-2}x+(-1)^{n-1}\sigma_{n-1},
\end{equation}
where $\sigma_1,\dots,\sigma_{n-1}$ are constant on $M$ and $\sigma_{n-1}$ is not constant on $M$.

 Differentiating  the equation $P_1(\lambda_i)=0$ on $V$,  $i=1, \ldots, n-1$,  we have 
\begin{equation}\label{eq:fund-diff-1}
P_1'(\lambda_i)\,d\lambda_i + (-1)^{n-1} d\sigma_{n-1} = 0.
\end{equation}

For each  $p\in V$, since $D=\emptyset$,  there exists some nonzero eigenvalue \(\lambda_i(p)\) of multiplicity at least $ 2$. For this $\lambda_i$, we have
\(
P_1'(\lambda_i)=0.
\)
Hence \(d\sigma_{n-1}(p)=0\).

Thus $d\sigma_{n-1}=0$ on $V$. Since $V$ is an open  dense subset of \(M\),  it follows that \(d\sigma_{n-1}\equiv 0\) on \(M\) and hence \(\sigma_{n-1}\) is constant on \(M^n\), which contradicts the assumption of the theorem. Therefore, Claim \ref{claim-6-1} holds.

 We define the polynomial $Q_1(x)$ of degree $n-1$ by
\[Q_1(x)=x^{n-1}-\sigma_1x^{n-2}+\sigma_2x^{n-3}-\cdots+(-1)^{n-2}\sigma_{n-2}x\]

Then
\[P_1(x)=Q_1(x)+(-1)^{n-1}\sigma_{n-1}.\] 
Consequently,
\[Q_1(0)=0, \  Q_1'(x)=P_1'(x)=\sum_{j=1}^{n-1}\prod_{i\neq j}^{n-1}(x-\lambda_i), \ \text{ and }  Q_1''(x)=P_1''(x).\]

For each $p\in M$, since $(-1)^{n-1}\sigma_{n-1}$ is independent of $x$,  the graph of $P_1(x)$ differs from that of $Q_1(x)$ by a vertical translation of amount  $(-1)^{n-1}\sigma_{n-1}$.

Choose a point $p\in D\neq \emptyset$. Since $P_1$ has $n-1$ distinct real roots at $p$, $P_1$ has $n-2$ distinct critical points $\xi_1<\cdots<\xi_{n-2}$, which are  local extrema that alternate between maxima and minima, satisfying 
\begin{equation}\label{ine-xi-1}
 \lambda_1<\xi_1<\lambda_2<\ldots<\lambda_i<\xi_i<\lambda_{i+1}<\ldots<\lambda_{n-2}<\xi_{n-2}<\lambda_{n-1}.   
\end{equation}
Consequently, $Q_1(x)$ has the same $n-2$ distinct critical points $\xi_1<\ldots<\xi_{n-2}$, again with alternating local extrema. Hence   $\displaystyle Q_1'(x)=(n-1)\prod_{i=1}^{n-2}(x-\xi_i)$ and  $Q_1''(\xi_i)\neq 0$ by observing that $\xi_i$ are distinct.

We define two numbers $\underline{\mathcal{M}}$ and $\overline{\mathfrak{m}}$ which are  the smallest local maximum value of $Q_1$ and  the largest local minimum value of $Q_1$ respectively, that is,

\[
\underline{\mathcal{M}} := \min\{Q_1(\xi_i): Q_1''(\xi_i)<0\}, \qquad
\overline{\mathfrak{m}} := \max\{Q_1(\xi_i): Q_1''(\xi_i)>0\}.
\]

Since the values of the $n-1$ real roots of  $P_1$ depend on the value of   $(-1)^{n-1}\sigma_{n-1}$, we may also consider  $P_1(x)$ as a one-parameter family of polynomials of degree $n-1$, parameterized by $t$:
\begin{equation}\label{defi:P1}
    P_1(x)(t)=Q_1(x)+t,\  t\in [a,b].
\end{equation}

By an argument similar to that in Theorem \ref{teo:M^n1-1}, the following conclusions hold for $P_1$:

(a) 
 \([a,b]\subset[-\underline{\mathcal{M}},-\overline{\mathfrak{m}}].\)

(b) For each $t\in [a,b]$, every root $\lambda_i$ of $P_1$ has multiplicity  at most $2$.  

$P_1$ has $n-1$ distinct real roots if and only if
\[
t\neq-\underline{\mathcal{M}} \ \text{ and } \ t\neq -\overline{\mathfrak{m}}.
\]

It follows that  for $t\in (a,b)\subset  (-\underline{\mathcal{M}},-\overline{\mathfrak{m}})$, $P_1$ has $n-1$ distinct real roots $\lambda_i$, which satisfy \[\lambda_1<\xi_1<\lambda_2<\ldots<\lambda_i<\xi_i<\lambda_{i+1}<\ldots<\lambda_{n-2}<\xi_{n-2}<\lambda_{n-1},\]
where $\xi_i$, $i=1,\ldots,n-2$ are defined in (\ref{ine-xi-1}).

Then \[D_0=\{p\in M\mid (-1)^n\sigma_{n-1}(p)\in (a,b)\}\subset D.\]

(c) If the case $ a=-\underline{\mathcal{M}}$  (or $b=-\overline{\mathfrak{m}}$, respectively) occurs, then at $t=a$ (or $t=b$, respectively)  there exists at least one index $i$ such that $\lambda_i=\lambda_{i+1}=\xi_i$, where $\xi_i$ satisfies
\[
P_1(\xi_i)=0,   P_1'(\xi_i)=0, \text{ and }  P_1''(\xi_i)<0 \ (\text{or } P_1''(\xi_i)>0, \text{ respectively}).
\]

By (b),  $A$ has $n$ distinct eigenvalues on the open subset $D_0$. This implies the smoothness of the eigenvalues $\lambda_i$ and  the local smoothness of the corresponding orthonormal eigenvectors $e_i, i=1, \ldots, n,$ on  $D_0$.

$(iii)$ By the same argument as in the proof of Theorem \ref{T-W-Y}, $X$ is well-defined on $D_0$.

$(iv)$ Noting $(-1)^{n-1}\sigma_{n-1}$ is smooth on $M$, by Sard's lemma, we can choose two sequences of regular values $a_k$ and $b_k$ of $(-1)^{n-1}\sigma_{n-1}$ such that $[a_k, b_k]\subset(a,b)$, $a_k\to a$ and $b_k\to b$.  

The Stokes theorem implies that
\begin{equation}\label{stokes}
    \int_{D_{k}} \mathrm{div}(X)=\int_{\partial D_k}\langle X,\nu\rangle 
\end{equation}
where $D_k=\{p\in M; (-1)^{n-1}\sigma_{n-1}(p)\in(a_k, b_k)\} $ and $\nu$ is the unit outward normal vector of $\partial D_k$.  

Let $\partial D_k^a=\{p\in M; (-1)^{n-1}\sigma_{n-1}(p)=a_k\}$ and  $\partial D_k^b=\{p\in M; (-1)^{n-1}\sigma_{n-1}(p)=b_k\}.$ Then
\[\partial D_k=\partial D_k^a\cup\partial D_k^b.\]

On $\partial D_k^b$,  $\nu=\frac{(-1)^{n-1}\nabla \sigma_{n-1}}{|\nabla \sigma_{n-1}|}$. By Lemma  \ref{lem-x}, 
we have, 
\begin{equation}\label{eq:xvb}
  \begin{split}
\langle X,\nu\rangle
&=\big\langle X,\frac{(-1)^{n-1}\nabla \sigma_{n-1}}{|\nabla \sigma_{n-1}|}\big\rangle =\frac{1}{|\nabla \sigma_{n-1}|}\big[-\frac12\sum_{i=1}^{n-1}\frac{P_1''(\lambda_i)}{P_1'(\lambda_i)^2}(\sigma_{n-1})_i^2+\frac1{P'(0)}(\sigma_{n-1})_n^2\big].
\end{split}  
\end{equation}
Similarly, on $\partial D_k^a$,    $\nu=-\frac{(-1)^{n-1}\nabla \sigma_{n-1}}{|\nabla \sigma_{n-1}|}$. By Lemma  \ref{lem-x}, we have
\begin{equation}\label{eq:xva}
  \begin{split}
\langle X,\nu\rangle
&=\frac{1}{|\nabla \sigma_{n-1}|}\big[\frac12\sum_{i=1}^{n-1}\frac{P_1''(\lambda_i)}{P_1'(\lambda_i)^2}(\sigma_{n-1})_i^2-\frac1{P'(0)}(\sigma_{n-1})_n^2\big].
\end{split}  
\end{equation}

Next we prove the  two claims for the one-parameter family of polynomials $P_1(x)$ with parameter $t$, given by \eqref{defi:P1}.

\begin{claim}\label{claim-p1}
 $\tfrac{P_1''(\lambda_i)}{P_1'(\lambda_i)^2}$, $i=1,\ldots,n-1$,  are bounded from above on $(a, a+\delta]$  for some $\delta>0$.
\end{claim}
We prove the claim for two possible cases of $a$.

Case 1. $a>-\underline{\mathcal{M}}$. In this case,  $\lambda_i(a)$, $i=1,\ldots,n-1$,  are distinct. Hence, as   $t\searrow a$,  $\lambda_i\rightarrow \lambda_i(a)$, and 
\[
\frac{P_1''(\lambda_i)}{P_1'(\lambda_i)^2}\to  \frac{P_1''(\lambda_{i}(a))}{P_1'(\lambda_{i}(a))^2}, \ i=1,\ldots, n-1.
\]
It implies that 
$\tfrac{P_1''(\lambda_i)}{P_1'(\lambda_i)^2}$, $i=1,\ldots,n-1$,  are bounded from above on $(a, a+\delta]$ for some $\delta>0$.

Case 2. $a=-\underline{\mathcal{M}}$. In this case,  as  $t\searrow a$,  $\lambda_i\rightarrow \lambda_i(a), i=1, \ldots, n-1$.
Notice that only for those indices $i$ such that  $Q_1(\xi_i)=\underline{\mathcal{M}}$, we have  $\lambda_i(a)=\lambda_{i+1}(a)=\xi_i$.

When $n$ is even, we have
\begin{equation}\label{ine:odd}
    \lambda_1(a)\leq \lambda_2(a)<\lambda_3(a)\leq \lambda_4(a)<\lambda_5(a)\leq\ldots<\lambda_{n-3}(a)\leq\lambda_{n-2}(a)<\lambda_{n-1}(a),
\end{equation}
where the equality in $ \lambda_i(a)\leq\lambda_{i+1}(a)$  in \eqref{ine:odd} occurs only for those indices $i$ such that $Q_1(\xi_i)=\underline{\mathcal{M}}$. 

For the index  $i$ such that $\lambda_i(a)=\lambda_{i+1}(a)=\xi_i$, we have $Q_1(\xi_i)=\underline{\mathcal{M}}$, $Q_1'(\xi_i)=0$, and $Q_1''(\xi_i)<0$.  Hence as $t\searrow a$,  $\lambda_i\rightarrow \xi_i$ and $\lambda_{i
+1}\rightarrow \xi_i$. Consequently,
\begin{equation}\label{odd-1-1}
 \frac{P_1''(\lambda_i)}{P_1'(\lambda_i)^2}\to -\infty\  \text{ and } \ \frac{P_1''(\lambda_{i+1})}{P_1'(\lambda_{i+1})^2}\to -\infty.   
\end{equation}

For the index $i$ such that $Q_1(\xi_i)> \underline{\mathcal{M}}$, as  $t\searrow a$,  $\lambda_i\rightarrow \lambda_i(a)$ and $\lambda_{i+1}\rightarrow \lambda_{i+1}(a)$. Hence,

\begin{equation}\label{od-2}
   \frac{P_1''(\lambda_i)}{P_1'(\lambda_i)^2}\to  \frac{P_1''(\lambda_{i}(a))}{P_1'(\lambda_{i}(a))^2}\  \text{ and } \ \frac{P_1''(\lambda_{i+1})}{P_1'(\lambda_{i+1})^2}\to \frac{P_1''(\lambda_{i+1}(a))}{P_1'(\lambda_{i+1}(a))^2}. 
\end{equation}

By \eqref{ine:odd} and the alternating nature of the local extrema $\xi_i$, we obtain that for each index $i< n-1$, $\lambda_i=\lambda_i(t)$, with $ t\in (a,b),$ coincides either with the left endpoint  of the interval $[\lambda_i, \lambda_{i+1}]$ such that $Q_1(\xi_i)\geq \underline{M}$, or with the right endpoint  of the interval $[\lambda_{i-1}, \lambda_{i}]$ such that $Q_1(\xi_{i-1})\geq \underline{M}$. Hence  \eqref{odd-1-1} and \eqref{od-2} imply that $\tfrac{P_1''(\lambda_i)}{P_1'(\lambda_i)^2}$, $i< n-1$,  are bounded from above on $(a, a+\delta]$  for some $\delta>0$.

On the other hand, $\lambda_{n-1}$ is the right endpoint of $[\lambda_{n-2},\lambda_{n-1}]$ with $Q_1(\xi_{n-2})\leq \overline{\mathfrak{m}}<\underline{\mathcal{M}}$. Hence as $t\searrow a$, $\lambda_{n-1}\to \lambda_{n-1}(a)$. Thus
\begin{equation}\label{od-3-1}
   \frac{P_1''(\lambda_{n-1})}{P_1'(\lambda_{n-1})^2}\to  \frac{P_1''(\lambda_{n-1}(a))}{P_1'(\lambda_{n-1}(a))^2}. 
\end{equation}
This implies that  $\tfrac{P_1''(\lambda_{n-1})}{P_1'(\lambda_{n-1})^2}$ is bounded  on $(a, a+\delta]$  for some $\delta>0$.

When $n$ is odd, the following observations hold.
\begin{equation}\label{ine:even}
    \lambda_1(a)< \lambda_2(a)\leq\lambda_3(a)< \lambda_4(a)\leq\lambda_5(a)<\ldots<\lambda_{n-3}(a)\leq \lambda_{n-2}(a)<\lambda_{n-1}(a),
\end{equation}
where the equality in $ \lambda_i(a)\leq\lambda_{i+1}(a)$  in \eqref{ine:even} occurs only for those indices $i$ such that $Q_1(\xi_i)=\underline{\mathcal{M}}$.  

For each index $1<i<n-1$, $\lambda_i=\lambda_i(t)$, with $t\in (a,b)$, is either  the left endpoint  of the interval $[\lambda_i, \lambda_{i+1}]$ such that $Q_1(\xi_i)\geq \underline{M}$, or  the right endpoint  of the interval $[\lambda_{i-1}, \lambda_{i}]$ such that $Q_1(\xi_{i-1})\geq \underline{M}$. 

On the other hand, $\lambda_1$ is the left endpoint of $[\lambda_1,\lambda_2]$ with $Q_1(\xi_1)\leq \overline{\mathfrak{m}}$ and  $\lambda_{n-1}$ is the right endpoint of $[\lambda_{n-2},\lambda_{n-1}]$ with $Q_1(\xi_{n-1})\leq \overline{\mathfrak{m}}$, respectively. 

Using the above observations, by an argument similar to the case where $n$ is even, we can prove that $\tfrac{P_1''(\lambda_i)}{P_1'(\lambda_i)^2}$, $i=1,\ldots,n-1$,  are bounded from above on $(a, a+\delta]$  for some $\delta>0$.

We complete the proof of Claim \ref{claim-p1}.

\begin{claim}\label{claim-p2}
 $\tfrac{P_1''(\lambda_i)}{P_1'(\lambda_i)^2}$, $i=1,\ldots,n-1$,  are bounded from below on $[b-\delta,b)$ for some $\delta>0$.    
\end{claim} 

Since the proof of Claim \ref{claim-p2} is analogous to that of Claim \ref{claim-p1}, we omit it and only point out the following observation needed for the argument:

In the case  $b=-\overline{\mathfrak{m}}$, for the index $i$ such that $Q_1(\xi_i)=\overline{\mathfrak{m}} $,  it holds that $Q_1'(\xi_i)=0$ and $Q_1''(\xi_i)>0$.  Hence as $t\nearrow b$, $\lambda_i\to \xi_i$ and  $\lambda_{i+1}\to \xi_i$. It follows that
\begin{equation*}
  \frac{P_1''(\lambda_i)}{P_1'(\lambda_i)^2}\to +\infty\  \text{ and }\  \frac{P_1''(\lambda_{i+1})}{P_1'(\lambda_{i+1})^2}\to +\infty. 
\end{equation*}

Finally, we prove the conclusion of the item.

By \eqref{eq:xva}, Claim \ref{claim-p1}, and Lemma \ref{lema:boundary}, for $k$ sufficiently large,
\begin{equation}\label{ine:sup1}
    \int_{\partial D^a_k}\langle X,\nu\rangle \le C\int_{\partial D^a_k}|\nabla \sigma_{n-1}|\to 0,  \quad \text{as } k \to \infty.
\end{equation}
Similarly, By\eqref{eq:xvb}, Claim \ref{claim-p2}, and Lemma \ref{lema:boundary}, for $k$ sufficiently large,
\begin{equation}\label{ine:sup2}
    \int_{\partial D^b_k}\langle X,\nu\rangle \le C\int_{\partial D^b_k}|\nabla \sigma_{n-1}|\to 0, \quad \text{as }  k\rightarrow \infty.
\end{equation}
Then \eqref{stokes}, together with \eqref{ine:sup1} and \eqref{ine:sup2}, implies that
\[
\limsup_{k\rightarrow \infty}\int_{D_k} \mathrm{div}(X)\leq\limsup_{k\rightarrow \infty}\int_{\partial D^b_k}\langle X,\nu\rangle +\limsup_{k\rightarrow \infty}\int_{\partial D^a_k}\langle X,\nu\rangle= 0.
\]
This completes the proof of Theorem \ref{teo:M^n2}.
\end{proof}

Using the same approach as in the proof of Theorem \ref{teo:M^n}, together with Theorem \ref{teo:M^n2} and Lemma \ref{cor:(n-1)},  we prove Theorem \ref{teo:M^n-(n-1)-intro2} as follows.
\begin{proof}[Proof of Theorem \ref{teo:M^n-(n-1)-intro2}]
We assume for the sake of contradiction that $\sigma_{n-1}(A)$  is not constant. By Lemma \ref{cor:(n-1)}, $\Psi\leq 0$ on $D_0$. By $S_M\ge 0$  and  (\ref{div(X)}):
$\dv (X) =\frac12 S_M - \Psi$, it follows that $\mathrm{div}(X)\ge 0$ on $D_0$. From (iv) of Theorem \ref{teo:M^n2}, we have  $\lim_{k\to +\infty}\int_{D_k}\dv(X)=0$ and then $\int_{D_k} \mathrm{div}(X)=0$ for all $k$. Hence $\dv(X)\equiv 0$ and then $\Psi\equiv 0$ on $D_0$.  By  Lemma \ref{cor:(n-1)}, $(\sigma_{n-1})_i\equiv 0$, $i=1, \ldots, n,$ on $D_0$. Consequently,   $\sigma_{n-1}$ is constant on $D_0$ and by continuity it is constant on $M$, which yields a contradiction.

Therefore, $\sigma_{n-1}(A)$ is constant on $M$, and hence all eigenvalues of $A$ are constant on $M$.

If  $A$ has $n$ distinct eigenvalues somewhere on $M$, then all eigenvalues of $A$ are distinct constants on $M$. By Lemma  \ref{cor:(n-1)}, $\Psi\equiv 0$ on $M$. Hence 
\[\int_M\frac12S_M=\int_M\dv(X)=0.
\]
Since $S_M\geq 0$, we have $S_M\equiv 0$ on $M$. This completes the proof of Theorem~\ref{teo:M^n}.
\end{proof}

In the following, we prove Theorem \ref{teo:M^n-(n-1)-m}.

\begin{proof}[Proof of Theorem  \ref{teo:M^n-(n-1)-m}]
    Without loss of generality, we assume that the maximal eigenvalue of $A$   is a constant simple eigenvalue, denoted by $\lambda_n=c$. Define the smooth Codazzi symmetric $(0,2)$-tensor field on $M$:
    \[b=a-cg,\] where $g$ is the Riemannian metric on $M$.

    Let $B$ be the associated $(1,1)$-tensor field of $b$ on $M$.

    The eigenvalues of $B$ are $\eta_i=\lambda_i-c$, $i=1,\ldots,n$. It follows that $\eta_i<0$ everywhere, for $i=1,\ldots, n-1$, and $\eta_n=0$. Consequently, $\sigma_n(B)\equiv 0$ and $\sigma_{n-1}(B)\neq 0$ everywhere. 
    
    Since subtracting $cg$ shifts all eigenvalues by the same constant,  the  condition $(ii)$ is equivalent to constancy of $\sigma_k(B)$ for $k=1,\dots,n-2$.
    
    Therefore $B$ satisfies the assumptions of Theorem \ref{teo:M^n-(n-1)-intro2} and hence all eigenvalues of $B$ are constant. This implies that  all eigenvalues of $A$ are constant on $M$. Finally, if the eigenvalues of $A$ are distinct at some point, then the eigenvalues of $B$ are distinct at that point. It follows from Theorem~\ref{teo:M^n-(n-1)-intro2} that  $S_M\equiv0$.
    \end{proof}

\section{Applications to shape operator and Ricci tensor }\label{sec:app}

In this section, we give  applications to some smooth Codazzi symmetric $(0,2)$-tensor fields.
\subsection{Hypersurfaces in the sphere}\

Using Theorem  \ref{teo:M^n-(n-1)-m}, we now prove Corollary \ref{cor:hypersphere}, which provides a rigidity  result for closed  isoparametric hypersurfaces in the $(n+1)$-dimensional unit sphere $\mathbb S^{n+1}$.

\begin{proof}[Proof of Corollary \ref{cor:hypersphere}]
Let $M^n \subset \mathbb{S}^{n+1}$ be a closed hypersurface with shape operator $A$. 
Since  $A$ is a Codazz tensor and $H_r=\frac1{\binom{n}{r}}\sigma_r(A)$,  the hypotheses $(i), (ii)$, and $(iii)$ allow us to apply Theorem \ref{teo:M^n-(n-1)-m}. It follows that all principal curvatures of $M$ are constant, and hence $M$ is isoparametric.

If the principal curvatures are distinct at some point, then the last conclusion in Theorem \ref{teo:M^n-(n-1)-m} yields  $S_M\equiv0$.
\end{proof}
\begin{remark}
It was proved by Tang and Yan \cite{Tang-Yan23} that any isoparametric hypersurface in the  unit sphere $\mathbb S^{n+1}$ has nonnegative scalar curvature.

\end{remark}

\subsection{Ricci tensor}\

A Riemannian manifold $(M^n,g)$ is said to have \emph{harmonic curvature} if the Riemann curvature tensor is divergence–free, that is,
\[
\operatorname{div} R = 0.
\]
This condition holds if and only if the Ricci tensor $\operatorname{Ric}$ satisfies the Codazzi equation,
\[
(\nabla_X \operatorname{Ric})(Y,Z)=(\nabla_Y \operatorname{Ric})(X,Z)
\quad \text{for all } X,Y,Z\in TM.
\]
Thus Theorem~\ref{teo:M^n-(n-1)-intro2} directly implies Corollary \ref{cor:ricci}.

We mention that there exist various examples of compact Riemannian manifolds with harmonic curvature and with $\nabla \operatorname{Ric}\neq 0$ . 
In particular, such a metric always exists on the product manifold $S^1\times N$, where $N$ is any compact Einstein manifold with positive scalar curvature.

\section{Appendix}\label{appendix}
For convenience, we include the lemmas of Almeida–Brito \cite{Almeida-Brito90} and Tang–Wei–Yan \cite{Tang-Wei-Yan20}, as well as their respective proofs.

\begin{lema}[Almeida-Brito\cite{Almeida-Brito90}]
\label{lema:advanced}
Let $M$ be a closed Riemannian manifold.
Suppose $u:M\to\mathbb{R}$ is a smooth function and let $m=\min_M u$.
For $\varepsilon>0$, set
\[
D_\varepsilon = u^{-1}([m,m+\varepsilon]).
\]
Then
\begin{equation}
\label{eq:advanced}
\lim_{\varepsilon\to0}\int_{D_\varepsilon}|\Delta u|\,\mathrm{vol}=0.
\end{equation}
\end{lema}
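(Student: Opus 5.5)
We want $\int_{D_\varepsilon}|\Delta u|\to 0$ as $\varepsilon\to 0$, where $D_\varepsilon=u^{-1}([m,m+\varepsilon])$. The plan is to split $D_\varepsilon$ into the part that lies in the minimum set and the part that does not, and to show that $\Delta u\ge 0$ almost everywhere on the minimum set, so that only a vanishing-measure region can contribute.

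First, the sets $D_\varepsilon$ decrease as $\varepsilon\searrow 0$, and
\[
\bigcap_{\varepsilon>0}D_\varepsilon=Z:=u^{-1}(m),
\]
which is compact. Since $|\Delta u|$ is continuous, hence bounded and integrable on closed $M$, dominated (or monotone) convergence gives
\[
\lim_{\varepsilon\to 0}\int_{D_\varepsilon}|\Delta u|=\int_{Z}|\Delta u|.
\]
So it suffices to show $\int_Z|\Delta u|=0$.

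Second, we split $Z$ into its interior $Z^\circ$ and its boundary-type remainder $Z\setminus Z^\circ$.

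\emph{On $Z^\circ$:} $u\equiv m$ on an open set, so $\Delta u=0$ there.

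\emph{On $Z\setminus Z^\circ$:} every point of $Z$ is a global minimum of $u$, so $\nabla u=0$ and $\nabla^2u\ge 0$ there; in particular $\Delta u\ge0$ on $Z$. We then split again according to the sign of $\Delta u$.
\begin{itemize}
\item[(a)] On $Z\cap\{\Delta u>0\}$: at such a point $p$ the Hessian is nonzero and positive semidefinite, so some eigenvalue is positive. Take $f=\langle\nabla u,v\rangle$ for a suitable local vector field $v$ extending a unit eigenvector with positive eigenvalue. Then $f$ vanishes on $Z$ and $df(p)\neq0$. By the implicit function theorem, $Z$ near $p$ is contained in the smooth hypersurface $\{f=0\}$, which has measure zero. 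Covering the set by countably many such neighborhoods shows that $Z\cap\{\Delta u>0\}$ has measure zero.
\item[(b)] On $Z\cap\{\Delta u=0\}$: the integrand vanishes.
\end{itemize}

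Combining these, $\int_Z|\Delta u|=0$, which yields \eqref{eq:advanced}. The main obstacle is step (a), the measure-zero argument. An alternative route is the standard fact that, for a $C^{1,1}$ (here $C^\infty$) function, $\nabla^2u=0$ almost everywhere on $\{\nabla u=0\}$, applied componentwise to $\nabla u$ in local coordinates (Stampacchia's lemma: $D w=0$ a.e. on $\{w=0\}$ for Sobolev $w$). This gives $\Delta u=0$ a.e. on $Z$ directly, and the local implicit-function argument above serves as a self-contained alternative to it.
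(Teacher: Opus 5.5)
Your proof is correct. Its geometric core is the same as the paper's: at a point of $u^{-1}(m)$ where $\Delta u\neq 0$, we have $\nabla u=0$ but the Hessian is nonzero. So some first derivative of $u$ has nonzero differential there: your $f=\langle\nabla u,v\rangle$, or the paper's $\partial u/\partial x_1$ in suitable coordinates. Hence the bad set is locally contained in a smooth hypersurface.

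Where you differ is in how the limit is handled. You use continuity of the measure from above (dominated convergence) to identify the limit exactly as $\int_{u^{-1}(m)}|\Delta u|$. It then only remains to show that $u^{-1}(m)\cap\{\Delta u\neq 0\}$ is a null set, which a countable cover gives.

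The paper instead argues by contradiction with an explicit $\varepsilon_0$. It takes the open set $\mathcal U$ where $|\Delta u|$ is small and an open set $\mathcal V$ of small volume around finitely many hypersurface pieces. It then uses compactness of $u^{-1}(m)$ to get $D_\varepsilon\subset\mathcal U\cup\mathcal V$ for small $\varepsilon$. In effect this reproves continuity from above by hand, and it needs a finite cover plus outer regularity to produce $\mathcal V$. Strictly, the finite cover should be of the compact set $u^{-1}(m)\cap\{|\Delta u|\ge \varepsilon_0/(4\,\mathrm{vol}(M))\}$, since the $V_p$ are only defined where $\Delta u\neq 0$.

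Your route is shorter and avoids that compactness bookkeeping entirely. The paper's gives the same result with more explicit, quantitative control.

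Two minor redundancies in your write-up:
\begin{itemize}
\item The split of $Z$ into $Z^\circ$ and $Z\setminus Z^\circ$ is unnecessary, since cases (a) and (b) already cover all of $Z$.
\item The observation $\Delta u\ge 0$ on $Z$ is not needed either. Any nonzero eigenvalue of the Hessian at $p$, of either sign, gives $df(p)\neq 0$.
\end{itemize}
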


\begin{proof}
Assume, by contradiction, that there exists $\varepsilon_0>0$ such that
\[
\int_{D_\varepsilon}|\Delta u|\,\mathrm{vol}\ge \varepsilon_0
\quad\text{for all sufficiently small }\varepsilon>0.
\]
Set
\[
E=\{\,p\in M:\ u(p)=m,\ \Delta u(p)\ne0\,\}.
\]
For any $p\in E$ we have $du(p)=0$ and $\Delta u(p)\ne0$, so we can choose a local coordinate system $x=(x_1,\ldots,x_n)$ on a neighborhood $V_p$ of $p$ such that $\partial u/\partial x_1$ has no critical points in $V_p$.
Define
\[
\Sigma_p=\Bigl\{\,q\in V_p:\ \frac{\partial u}{\partial x_1}(q)=0\,\Bigr\}.
\]
Then $\Sigma_p$ is a smooth hypersurface and $E\cap V_p\subset\Sigma_p$. Since $u^{-1}(m)$ is compact, finitely many such $V_p$
cover $u^{-1}(m)$. Let $\Sigma$ denote the corresponding finite union of smooth hypersurfaces, and set
\[
\mathcal{U}=\Bigl\{\,q\in M:\ |\Delta u(q)|<\frac{\varepsilon_0}{4\,\mathrm{vol}(M)}\,\Bigr\}.
\]
Choose an open set $\mathcal{V}\subset M$ such that $\Sigma\subset\mathcal{V}$ and $\mathrm{vol}(\mathcal{V})<\tfrac{\varepsilon_0}{4C}$, where $C=1+\max_M|\Delta u|$.
For sufficiently small $\varepsilon>0$,  we have $D_\varepsilon\subset\mathcal{U}\cup\mathcal{V}$, and hence
\[
\int_{D_\varepsilon}|\Delta u|
\le \int_{\mathcal{U}}|\Delta u|+\int_{\mathcal{V}}|\Delta u|
\le \frac{\varepsilon_0}{4} + C\,\mathrm{vol}(\mathcal{V})
< \frac{\varepsilon_0}{2},
\]
contradicting the assumption.  Therefore, the limit \eqref{eq:advanced} holds.
\end{proof}

Now we prove Lemma 2.1 in \cite{Tang-Wei-Yan20}  with a  slightly different proof.
\begin{lema}[Tang-Wei-Yan \cite{Tang-Wei-Yan20}] \label{lem-TWY}
    Let  $\lambda_1,\hdots,\lambda_n$ $(n\geq3)$  be distinct real numbers. For a fixed $r \in \{1,\hdots,n\}$. Define
    \begin{equation*}
L(r) = \sum_{\substack{i \neq j \\ i,j \neq r}}^n c_{ir} c_{jr},
\end{equation*}
where 
\[
c_{ir}=\frac{1}{\displaystyle (\lambda_r-\lambda_i) \prod_{\substack{k=1 \\ k \neq i}}^n (\lambda_k-\lambda_i)}.
\]
Then $L(r)<0$.
\end{lema}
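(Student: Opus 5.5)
The plan is to turn $L(r)<0$ into an inequality between a squared sum and a sum of squares, and then to evaluate the sum with the partial fraction machinery of Proposition~\ref{lem:partial-fraction-identity}. I have not yet found the final estimate (the third step below); what I give there is the line of attack I would try first.

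\textbf{Step 1: reduction.} Fix $r$ and write $c_i:=c_{ir}$ for $i\neq r$. Then
\[
L(r)=\Bigl(\sum_{i\neq r}c_i\Bigr)^2-\sum_{i\neq r}c_i^2 ,
\]
so it suffices to prove
\[
\Bigl(\sum_{i\neq r}c_i\Bigr)^2<\sum_{i\neq r}c_i^2 .
\]

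\textbf{Step 2: evaluating the sum.} Put $P(x)=\prod_k(x-\lambda_k)$. Since $\prod_{k\neq i}(\lambda_k-\lambda_i)=(-1)^{n-1}P'(\lambda_i)$, we have
\[
c_i=\frac{(-1)^{n-1}}{(\lambda_r-\lambda_i)P'(\lambda_i)} .
\]
Proposition~\ref{lem:partial-fraction-identity} then gives
\[
\sum_{i\neq r}c_i=(-1)^n\frac{P''(\lambda_r)}{2P'(\lambda_r)^2}.
\]
Using $P''(\lambda_r)/P'(\lambda_r)=2\sum_{i\neq r}(\lambda_r-\lambda_i)^{-1}$, the left-hand side of the target inequality becomes
\[
\frac{1}{P'(\lambda_r)^2}\Bigl(\sum_{i\neq r}\frac{1}{\lambda_r-\lambda_i}\Bigr)^2 .
\]
The right-hand side is $\sum_{i\neq r}(\lambda_r-\lambda_i)^{-2}P'(\lambda_i)^{-2}$.

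An equivalent and more symmetric form uses $Q(x)=P(x)/(x-\lambda_r)$, whose roots are the $\lambda_i$ with $i\neq r$. Then $c_i=\pm\bigl((\lambda_r-\lambda_i)^2Q'(\lambda_i)\bigr)^{-1}$. The $c_i$ are, up to a common sign, the residues at $\lambda_i$ of $R(x)=\bigl((x-\lambda_r)^2Q(x)\bigr)^{-1}$. Since $\deg\bigl((x-\lambda_r)^2Q\bigr)=n+1\ge 2$, the residues of $R$ sum to zero, so $\sum_{i\neq r}c_i$ equals, up to sign, $(1/Q)'(\lambda_r)$.

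\textbf{Step 3: the inequality (main obstacle).} This is where the real work lies. I would first exploit the sign pattern of the $c_i$. The factor $\prod_{k\neq i}(\lambda_k-\lambda_i)$ alternates in sign as $i$ runs through the ordered $\lambda$'s, while $\lambda_r-\lambda_i$ has constant sign on each side of $\lambda_r$. Hence the $c_i$ alternate in sign on each side of $r$, and the two $c_i$ for the immediate neighbours of $\lambda_r$ have the same sign.

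The idea is to show that the cross terms $2\sum_{i<j}c_ic_j$ are negative. I would group the terms into consecutive pairs moving outward from $\lambda_r$ and compare magnitudes, in the spirit of the single-term domination estimate $|d_n|<|d_1|$ used in Lemma~\ref{cor:(n-1)}. Concretely, I would try to establish
\[
\Bigl|\sum_{i\neq r}c_i\Bigr|<\max_{i\neq r}|c_i| ,
\]
which is enough, since $\max_i c_i^2\le\sum_i c_i^2$. I expect this to come from the residue identity of Step 2, with the maximum attained at an index adjacent to $r$. The hypothesis $n\ge3$ is essential here: it guarantees at least two terms, so the inequality is strict.

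If the domination argument fails, my fallback is induction on $n$. I would remove the extreme root farthest from $\lambda_r$ and track how $\sum_i c_i$ and $\sum_i c_i^2$ change, using the partial fraction identity at each step.
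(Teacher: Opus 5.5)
Your Steps 1 and 2 are correct and agree with the paper's reduction. Write $b_i:=1/(\lambda_i-\lambda_r)$ and $\Lambda:=\prod_{i\neq r}b_i$. Then your identity $\sum_{i\neq r}c_{ir}=(-1)^nP''(\lambda_r)/(2P'(\lambda_r)^2)$ is exactly the paper's $\sum_{i\neq r}c_{ir}=\Lambda\sum_{i\neq r}b_i$.

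The gap is Step 3, which carries the entire content of the lemma and which you do not prove. Your target $\bigl|\sum_i c_{ir}\bigr|<\max_i|c_{ir}|$ is true, but the heuristics you propose for it are wrong.

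First, the sign pattern. The product $\prod_{k\neq i}(\lambda_k-\lambda_i)$ already contains the factor $\lambda_r-\lambda_i$, so $c_{ir}=\bigl((\lambda_r-\lambda_i)^2\prod_{k\neq i,r}(\lambda_k-\lambda_i)\bigr)^{-1}$, as your own residue form shows. Hence the $c_{ir}$ simply alternate in sign along the ordered list $\{\lambda_i\}_{i\neq r}$, and the two neighbours of $\lambda_r$ have opposite signs. For $\lambda=(0,1,2)$ and $r=2$ one gets $c_{12}=1/2$ and $c_{32}=-1/2$. A pairing argument built on your sign pattern therefore starts from a false premise.

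Second, $\max_i|c_{ir}|$ need not be attained next to $\lambda_r$. Take $\lambda_r=0$ with remaining roots $1,5,5+\varepsilon$. The terms at $5$ and $5+\varepsilon$ have size about $1/(100\varepsilon)$, while the only neighbour, $1$, gives about $1/16$.

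The missing idea is to invert the variables and then dominate $\sum_i c_{ir}$ by the term of one specific neighbour. In terms of the $b_k$ one computes $c_{ir}=\Lambda b_i^{\,n-1}/\prod_{k\neq i,r}(b_i-b_k)$.

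If $\sum_k b_k=0$, then $L(r)=-\sum_i c_{ir}^2<0$. Otherwise, replace every $\lambda_k$ by $-\lambda_k$ if necessary; this multiplies all $c_{ir}$ by $(-1)^n$ and leaves $L(r)$ unchanged. So assume $\sum_k b_k>0$ and let $b_{i_0}=\max_k b_k>0$; that is, $\lambda_{i_0}$ is the nearest root to the right of $\lambda_r$. With $x_k=b_k/b_{i_0}$ for $k\neq i_0,r$,
\[
\frac{\sum_{i\neq r}c_{ir}}{c_{i_0r}}=\Bigl(1+\sum_{k\neq i_0,r}x_k\Bigr)\prod_{k\neq i_0,r}(1-x_k).
\]
The right-hand side is a product of $n-1$ positive numbers whose sum is $n-1$. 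By AM--GM it is at most $1$, with equality only if every $x_k=0$. That is impossible, since $b_k\neq 0$ and $n\ge 3$ guarantees at least one such $k$.

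Hence $\bigl(\sum_i c_{ir}\bigr)^2<c_{i_0r}^2\le\sum_i c_{ir}^2$, i.e.\ $L(r)<0$. This is exactly the paper's proof. The paper phrases the AM--GM step as concavity of $\ln\bigl[(1+\sum x_i)\prod(1-x_i)\bigr]$, whose unique maximum is at the origin.
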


\begin{proof}
Fix $r$ and denote $b_{i} = \frac{1}{\lambda_i - \lambda_r}$, $i \neq r$. Note that $b_i$'s are $n-1$ distinct non-zero numbers. Let $\Lambda = \prod_{i \neq r}^n b_i$.  For $i \neq r$, we have
\begin{eqnarray*}
c_{ir} &=& \dfrac{1}{\displaystyle (\lambda_r-\lambda_i) \prod_{\substack{k=1 \\ k \neq i}}^n (\lambda_k-\lambda_i)} = \dfrac{b_i^2}{\displaystyle \prod_{\substack{k=1 \\ k \neq i,r}}^n \big(\frac{1}{b_k} - \frac{1}{b_i}\big)} \\
&=& \dfrac{\displaystyle b_i^n \prod_{k \neq i,r}^n b_k}{\displaystyle \prod_{\substack{k=1 \\ k \neq i,r}}^n \left(b_i - b_k\right)} 
=\dfrac{\displaystyle b_i^{n-1} \Lambda}{\displaystyle \prod_{\substack{k=1 \\ k \neq i,r}}^n \left(b_i - b_k\right)} .
\end{eqnarray*}
Observe that $b_i, i\neq r$ are $n-1$ distinct roots of the polynomial $\Lambda x^{n-1}-\sum_{\substack{i=1 \\ i \neq r}}^n c_{ir} {\displaystyle \prod_{\substack{k=1 \\ k \neq i,r}}^n \left(x - b_k\right)}$. Then Vieta theorem implies 
\begin{equation*}
\sum_{\substack{i=1 \\ i \neq r}}^n c_{ir} = \Lambda \sum_{\substack{i=1 \\ i \neq r}}^n b_i.
\end{equation*}
Without loss of  generality, we assume  $\sum_{\substack{i=1 \\ i \neq r}}^n b_i>0$ and $b_{i_0}=\max\{ b_i\mid  i\in \{1,2,\cdots, n\}, i\ne r\}$.
\[\begin{split}
 \bigg(\sum_{\substack{k=1 \\ k \neq r}}^n b_k\bigg)  {\displaystyle \prod_{\substack{k=1 \\ k \neq i_0,r}}^n (b_{i_0} - b_k)}&= \bigg(b_{i_0}+\sum_{\substack{k=1 \\ k \neq i_0,r}}^n b_k\bigg)  {\displaystyle \prod_{\substack{k=1 \\ k \neq i_0,r}}^n (b_{i_0} - b_k)}\\
 &=b_{i_0}^{n-1} \bigg(1+\sum_{\substack{k=1 \\ k \neq i_0,r}}^n \frac{b_k}{b_{i_0}}\bigg)  {\displaystyle \prod_{\substack{k=1 \\ k \neq i_0,r}}^n \bigg(1-\frac{b_k}{b_{i_0}}\bigg)}<b_{i_0}^{n-1}.
\end{split}
\]
Here we used the fact that the function $ f(x_1, x_2, \cdots, x_m)=\ln \left[(1+\sum_{i=1}^mx_i){\displaystyle \prod_{i=1}^m \left(1-x_i\right)}\right]$ is  concave on the domain $\{(x_1, x_2, \cdots, x_m)\in \mathbb{R}^m, \sum_{i=1}^mx_i>-1, x_i\le 1\}$ and $(0,0,\cdots, 0)  $ is the unique maximum point where $f(0,0,\cdots,0)=0$.

\noindent Hence
\[\begin{split}
 \bigg(\sum_{\substack{i=1 \\ i \neq r}}^n c_{ir}\bigg)^2&=\Lambda^2\bigg(\sum_{\substack{k=1 \\ k \neq r}}^n b_k\bigg) ^2 \\
 &<\Biggl(\frac{b_{i_0}^{n-1}\Lambda}{\displaystyle \prod_{\substack{k=1 \\ k \neq i_0,r}}^n \left(b_{i_0} - b_k\right)}\Biggr)^2=c_{i_0r}^2<\sum_{\substack{i=1 \\ i \neq r}}^n c_{ir}^2.
\end{split}
\]
Therefore
\begin{equation*}
L(r) = \sum_{\substack{i \neq j \\ i,j\neq r}}^n c_{ir} c_{jr} = \Big(  \sum_{\substack{i=1 \\ i \neq r}}^n c_{ir}\Big)^2 - \sum_{\substack{i=1 \\ i \neq r}}^n c_{ir}^2<0.
\end{equation*}
\end{proof}


\begin{thebibliography}{XXX0000}

\bibitem[AB90]{Almeida-Brito90}
S. C. de Almeida and F. G. B. Brito. "Closed 3-dimensional hypersurfaces with constant mean curvature and constant scalar curvature". In: \textit{Duke Mathematical Journal} 61 (1990), pp. 195-206.

\bibitem[ABS07]{Almeida-Brito-Sousa07}
S. C. de Almeida, F. G. B. Brito and L.A.M. de Sousa Jr. "Closed hypersurfaces of $\mathbb{S}^4$ with two constant curvature functions". In: \textit{Results in Mathematics} 50 (2007), pp. 17-26.

\bibitem[CdK70]{Chern-doCarmo-Koba70}
S. S. Chern, M. do Carmo and S. Kobayashi. "Minimal submanifolds of the sphere with second fundamental form of constant length". In: \textit{Functional Analysis and Related Fields, Springer-Verlag, Berlin} (1970), pp. 59-75.

\bibitem[Cha93a]{Chang93a}
S. P. Chang. "A closed hypersurface with constant scalar curvature and constant mean curvature in $\mathbb{S}^4$ is isoparametric". In: \textit{Communications in Analysis and Geometry} 1 (1993), pp. 71-100.

\bibitem[Cha93b]{Chang93b}
S. P. Chang. "On minimal hypersurfaces with constant scalar curvatures in $\mathbb{S}^4$". In: \textit{Journal of Differential Geometry} 37 (1993), pp. 523-534.

\bibitem[CW93]{Cheng-Wan}
Q.~M.~Cheng and Q.~R.~Wan.
 \emph{Hypersurfaces of space forms $M^{4}(c)$ with constant mean curvature}.
 In \emph{Geometry and Global Analysis}, Tohoku University, Sendai, 1993, pp.~437--442.

\bibitem[CZ23]{Cheng-Zhou23}
X. Cheng and D. Zhou, 
``Rigidity of four-dimensional gradient shrinking Ricci solitons'',
\textit{Journal für die reine und angewandte Mathematik (Crelle’s Journal)} 802 (2023), 255--274.


\bibitem[DGW17]{DGW}
Q. T. Deng, H. L. Gu and Q. Y. Wei. "Closed Willmore minimal hypersurfaces with constant scalar curvature in $\mathbb{S}^5(1)$ are isoparametric". In: \textit{Advances in Mathematics} 314 (2017), pp. 278-305.

\bibitem[DS83]{DS83}A. Derdziński  and  C.-L. Shen,  Codazzi Tensor Fields, Curvature and Pontryagin Forms. Proceedings of the London Mathematical Society, s3-47(1983): 15-26. 

\bibitem[LSS05]{Lusala-Scherfner-Sousa05}
T. Lusala, M. Scherfner, L. A. M. de Sousa Jr. "Closed minimal Willmore hypersurfaces of $\mathbb{S}^5(1)$ with constant scalar curvature". In: \textit{Asian Journal of Mathematics} 9.1 (2005), pp. 65-78.

\bibitem[FG16]{FG16}
M.~Fern\'andez-L\'opez and E.~Garc\'{\i}a-R\'{\i}o,
\emph{On gradient Ricci solitons with constant scalar curvature},
Proc. Amer. Math. Soc. \textbf{144} (2016), no.~1, 369--378,
DOI 10.1090/proc/12693. MR3415603

\bibitem[PW09]{PW09} Peter Petersen and William Wylie, Rigidity of gradient Ricci solitons, Pacific J. Math.
241 (2009), no. 2, 329–345, DOI 10.2140/pjm.2009.241.329. MR2507581

\bibitem[Sim68]{Simons68}
J. Simons. "Minimal varieties in Riemannian manifolds". In: \textit{Annals of Mathematics} 88 (1968), pp. 62-105.

\bibitem[SVW12]{SVW}
M. Scherfner, L. Vrancken and S. Weiss. "On closed minimal hypersurfaces with constant scalar curvature in $\mathbb{S}^7$". In: \textit{Geometriae Dedicata} 161 (2012), pp. 409-416.
\bibitem[SWY12]{SWY}
M. Scherfner, S. Weiss, and S.-T. Yau,
A review of the Chern conjecture for isoparametric hypersurfaces in spheres,
in \emph{Advances in Geometric Analysis},
Adv. Lect. Math. (ALM), vol.~21,
Int. Press, Somerville, MA, 2012, pp.~175--187.

\bibitem[S75]{Singley75}
D.~H.~Singley,
\newblock Smoothness theorems for the principal curvatures and principal vectors of a hypersurface,
\newblock \emph{Rocky Mountain J. Math.} \textbf{5} (1975), no.~1, 135--144.


\bibitem[TWY20]{Tang-Wei-Yan20}
Z. Tang, D. Wei and W. Yan. "A sufficient condition for a hypersurface to be isoparametric". In: \textit{Tohoku Mathematical Journal} 72 (2020), pp. 493-505.

\bibitem[TY23]{Tang-Yan23}
Z. Tang and W. Yan. "On the Chern conjecture for isoparametric hypersurfaces". In: \textit{Science China Mathematics} 66 (2023), pp. 143–162.


\end{thebibliography}
\end{document}